\DeclareMathOperator{\im}{im}
\DeclareMathOperator{\coker}{coker}
\DeclareMathOperator{\RMod}{\mathbf{Mod}}
\DeclareMathOperator{\Mod}{G-\mathbf{Mod}}
\DeclareMathOperator{\EMod}{E-\mathbf{Mod}}
\DeclareMathOperator{\DMod}{\mathcal{D}-\mathbf{Mod}}
\DeclareMathOperator{\Hom}{Hom}
\DeclareMathOperator{\Ext}{Ext}
\newcommand{\Loc}{\Mod(U_p)}
\newcommand{\Locc}{{\Mod(U_p^*)}}
\newcommand{\As}{{C^*}}
\newcommand{\ddA}{C}
\DeclareMathOperator{\Hol}{{\Mod^{\mathrm{gfg}}}}
\newcommand{\ov}[1]{\overline{#1}}
\newcommand{\wt}[1]{\widetilde{#1}}
\DeclareMathOperator{\Homm}{\mathcal{H}om}
\DeclareMathOperator{\QCoh}{\mathbf{QCoh}}
\DeclareMathOperator{\Spec}{Spec}
\numberwithin{equation}{section}
\newtheorem{lemma}[equation]{Lemma}
\newtheorem{proposition}[equation]{Proposition}
\newtheorem{theorem}[equation]{Theorem}
\newtheorem{THEOREM}{Theorem}
\newtheorem{corollary}[equation]{Corollary}
\theoremstyle{definition}
\newtheorem{remark}[equation]{Remark}
\theoremstyle{definition}
\newtheorem{definition}[equation]{Definition}
\newtheorem{notation}[equation]{Notation}
\numberwithin{equation}{section}
\newcommand{\gap}[1]{{\color{OliveGreen} \sf $\clubsuit\overset{\bullet\ \cdot}{\frown}\clubsuit$ GAP: [#1]}}
\newcommand{\removed}[1]{{\color{orange} \sf $\clubsuit\overset{\bullet\ \cdot}{\frown}\clubsuit$ REMOVED: [#1]}}
\newcommand{\C}{\mathbb C}
\newcommand{\Z}{\mathbb Z}
\newcommand{\A}{\mathbb A}
\renewcommand{\O}{{\mathcal{O}}}
\renewcommand{\div}{\mathrm{div}}
\DeclareMathOperator{\Aut}{Aut}
\newcommand{\cA}{\mathcal{A}}
\newcommand{\cC}{\mathcal{C}}
\newcommand{\Id}{\mathrm{Id}}
\newcommand{\pz}{{U_p}}
\newcommand{\pzz}{{U_p^*}}
\DeclareMathOperator{\St}{St}
\newcommand{\citep}[1]{\cite{#1}}
\DeclareMathOperator{\supp}{supp}
\DeclareMathOperator{\Prin}{Prin}
\DeclareMathOperator{\Div}{Div}
\renewcommand{\P}{\mathbb P}
\title{The local information of equivariant sheaves and elliptic difference equations}
\author{Mois\'es Herrad\'on Cueto}
\address{Department of Mathematics, Louisiana State University, 303 Lockett Hall, Baton Rouge, LA 70803, USA.}
\email {moises@lsu.edu}\urladdr{http://www.math.lsu.edu/~moises}
\date{\today}
\keywords{Equivariant sheaves, elliptic equations, difference equations, D-modules, connections}
\subjclass[2020]{14F43, 14B20}
\begin{document}

\maketitle

\begin{abstract}
We study the singularities of algebraic difference equations on curves from the point of view of equivariant sheaves. We propose a definition for the formal local type of an equivariant sheaf at a point in the case of a reduced curve acted on by a group which is virtually the integers. We show that with this definition, equivariant sheaves can be glued from an ``open cover''. Precisely, we show that an equivariant sheaf can be uniquely recovered from the following data: the restriction to the complement of a point, the local type at the point itself, and an isomorphism between the two over the punctured neighborhood of said point.

We study symmetric elliptic difference equations (``elliptic equations'' from now on) from this point of view. We consider several natural notions for an algebraic version of symmetric elliptic difference equations, i.e. symmetric elliptic difference modules (``elliptic modules''). We show that different versions are not equivalent, but we detail how they are related: all the versions embed fully faithfully into the same category of equivariant sheaves. This implies that we can use the theory for equivariant sheaves to study singularities of elliptic equations as well.

One reason to study elliptic equations is that they generalize, and degenerate to, ($q$-)difference equations (i.e. equivariant sheaves) and differential equations (i.e. $D$-modules) on the projective line. We discuss this from the elliptic module point of view, which requires studying elliptic modules on singular curves. We study the relation between elliptic modules on singular curves and their normalization. We show that for modules which are flat at the singular points there is an equivalence and we give examples showing that this cannot be improved upon.
\end{abstract}

\tableofcontents

\section{Introduction}

This paper concerns equivariant sheaves on curves and their local study. Equivariant sheaves can be interpreted as an algebraic counterpart to discrete equations: these include difference equations, which are linear recurrence relations of the form $y(t+1) = A(t)y(t)$ for $A(t)\in \operatorname{GL}_n(\C(t))$ and $y$ is a column vector; and $q$-difference equations, which take the form $y(qt) = A(t)y(t)$ for a given $q\in \C^\times$. The relation between equivariant sheaves and discrete equations is analogous to the relation between $D$-modules and differential equations.

The goal of this paper is to provide a notion for the local data of an equivariant sheaf around the formal neighborhood of a point $p$ on a curve $C$ (this is Definition~\ref{def:mainDef}). We do so in the case where the group acting on the curve is an extension of a finite group by $\Z$. We show that this definition is reasonable in that a sheaf can be recovered from its restriction to the formal neighborhood around $p$, its restriction to the open set $C\setminus p$ and an isomorphism between these two modules on the intersection, i.e. the punctured formal neighborhood around $p$. This is the content of Theorem~\ref{thm:main}.

Let us state it precisely: We are given a group $G$ that has a finite index subgroup isomorphic to $\Z$, acting on a reduced curve $C$ over a field $k$, with a closed point $p$. We will focus on equivariant sheaves whose stalks at every generic point of the curve are finitely generated, and call the category they form $\Hol(C)$. The restriction $|_\pz$ we define lands in a category of modules on the formal neighborhood $\pz$ with extra structure, which we will call $\Mod(\pz)$. We may also consider sheaves on the ``open'' subscheme obtained by removing the orbit of $p$ from $C$, which we denote $\As $. Similarly we can restrict these to the punctured neighborhood $\pzz$ (see Definition~\ref{def:puncturedCurve}). The usual pullback of quasicoherent sheaves to an open set can be enhanced in a natural way for equivariant sheaves on (the formal neighborhood in) a curve. When we include these restriction functors we obtain a commutative (up to natural isomorphism) square of restrictions:

\begin{equation}\label{eq:fiberDiagramIntro}
\begin{tikzcd}[column sep = 3 em, ampersand replacement = \&]
\Hol(C)\arrow[r,"|_{C^*}"]\arrow[d,"|_\pz"] \&
\Hol(\As)\arrow[d,"|_\pzz"] \\
\Mod(\pz) \arrow[r,"|_{\pzz}"]\&
\Mod(\pzz).
\end{tikzcd}
\end{equation}

\begin{THEOREM}\label{thm:main}
The Diagram~(\ref{eq:fiberDiagramIntro}) is a cartesian square of categories. More explicitly, it induces an equivalence between $\Hol(C)$ and the category $\Loc\times_\Locc\Hol(\As)$. This is the category of triples $(M_\pz,M_\As,\cong)$, consisting of objects $M_\pz\in \Mod(\pz)$, $M_\As\in \Hol(\As)$ and a fixed isomorphism $M_\pz|_{\pzz}\cong M_\As|_\pzz$.
\end{THEOREM}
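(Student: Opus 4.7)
The plan is to exhibit an explicit quasi-inverse to the restriction functor $\Hol(C) \to \Loc \times_\Locc \Hol(\As)$, building on the classical Beauville--Laszlo gluing at a single closed point of a reduced curve, and using equivariance to propagate the data at $p$ to its entire $G$-orbit.

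Concretely, given a triple $(M_{\pz}, M_{\As}, \varphi)$, I would construct $M \in \Hol(C)$ as follows. The equivariance data on $M_{\pz}$ consists of an action of the stabilizer of $p$; translating by any $g \in G$, we obtain a module $g_* M_{\pz}$ on the formal neighborhood $U_{gp}$ with equivariance under the stabilizer of $gp$, and an induced isomorphism $g_*\varphi$ between its restriction to $U_{gp}^*$ and $M_{\As}|_{U_{gp}^*}$. I would then apply the non-equivariant Beauville--Laszlo theorem at each orbit point to glue $M_{\As}$ with all the translates $g_* M_{\pz}$ into a single quasicoherent sheaf $M$ on $C$. The $G$-action on $M_{\As}$ together with the compatible equivariance data at the orbit points assembles into a $G$-equivariant structure on $M$, and the finite generation at generic points passes from $M_{\As}$ to $M$, so $M \in \Hol(C)$.

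The key input is therefore the non-equivariant Beauville--Laszlo theorem at a single closed point $p$ of a reduced Noetherian curve $C$: quasicoherent sheaves on $C$ are equivalent to triples consisting of a sheaf on $U_p$, a sheaf on $C \setminus \{p\}$, and an isomorphism of their restrictions to $U_p^*$. I would state and cite this as a preliminary lemma. Granted it, verifying that my construction and the natural restriction are mutually quasi-inverse reduces to tracking isomorphisms through the gluing: one composite is the identity by construction, and the other by applying the classical lemma at each orbit point.

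The main obstacle is that the orbit $G \cdot p$ is typically infinite, since $G$ is virtually $\Z$, so $\As$ is not literally an open subscheme of $C$ and the gluing formally takes place at infinitely many points at once. The saving grace is that equivariance collapses this infinity to a finite amount of data: the translates $g_* M_{\pz}$ and $g_*\varphi$ are uniquely determined by the single pair $(M_{\pz}, \varphi)$, so infinitely many simultaneous Beauville--Laszlo gluings are controlled by the single piece of data at $p$. Making this precise depends on the interpretation of $\As$ and of the restriction functors $|_{\As}$ and $|_{\pzz}$ in the equivariant formalism set up in Definition~\ref{def:puncturedCurve}, but once that framework is in place, the equivariant reduction is essentially formal and the core of the argument reduces to the classical single-point gluing.
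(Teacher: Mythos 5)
Your proposal correctly identifies the overall shape of the theorem—an equivariant Beauville--Laszlo gluing whose proof should build an explicit inverse to the restriction functor—and you also name the right obstacle: the orbit $Gp$ is typically infinite. But the resolution you offer does not close the gap. The real issue is not that one must perform the same single-point gluing ``infinitely many times with a single piece of data''; rather, gluing a sheaf on $\As$ along the infinitely many formal disks $\{U_{gp}\}_{g\in G}$ is simply not an instance of the classical Beauville--Laszlo theorem, and without additional constraints the naive construction does not yield the correct quasicoherent sheaf. In fact, your construction only ever uses the underlying $\St_p$-equivariant $R_p$-module of $M_\pz$, translated around by $G$ and glued to $M_\As$; it never invokes the free submodules $M^l, M^r$ (or $M^{lr}$) that are part of the definition of $\Mod(\pz)$ in cases (i) and (ii). So, if it were well defined, your inverse would be unable to distinguish two objects of $\Mod(\pz)$ with the same underlying module but different $M^\star$, and hence could not be a quasi-inverse to $\Phi=(|_\pz,|_{\As})$.

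The paper's adjoint $\iota_*$ shows exactly where $M^\star$ is needed and why the infinite gluing is not formal. A local section of $\iota_*M_\pz$ is a family $(m_g)_{g\in G}$ which, in addition to the expected condition $m_g\in M_\pz$ whenever $gp\in V$ (the only part your gluing would see), must satisfy the asymptotic constraint $m_{\gamma\tau^i}\in M_\pz^l$ for $i\ll 0$ and $m_{\gamma\tau^i}\in M_\pz^r$ for $i\gg 0$ (or the $M^{lr}$ variant in case (ii)). These conditions are indispensable: the right-exactness assertion in Lemma~\ref{lem:exactness}—the input to the d\'evissage through $0\to i_!i^!M\to M\to \ov M\to 0$ and the five lemma—relies on the induced map $M_\pz^\star\to\ov M_\pz^\star$ being an isomorphism, so that a section of $\ov M_\pz^\star$ lifts \emph{uniquely} to $M_\pz^\star$ for $|i|\gg 0$. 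This is where the entire difficulty of cases (i) and (ii) lives, and declaring the step ``essentially formal'' skips the actual content of the theorem. (In case (iii), where $[G:\St_p]<\infty$, your plan is essentially right, since there $\Mod(\pz)$ is just equivariant $R_p$-modules and the gluing really is finite; the trouble is precisely the infinite orbit.)
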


This theorem validates the definition of $|_\pz$ in that it ensures that at the very least no information is lost. It could be also interpreted as saying that $|_\pz$ provides a classification of singularities of discrete equations. 

An analogous theorem was proved previously in \cite{H}, in the case of difference equations on the affine line. The main difference in the current situation is that we are allowing group actions that are not necessarily free. This means that the definition of $|_\pz$ needs to be adapted to different situations. This theorem can also be thought of as analogous to the Beauville-Laszlo Theorem from \cite{beauvilleLaszlo}, in the equivariant situation.

All the relevant definitions and the proof of the Theorem can be found in Section~\ref{sec:mainThm}.

\subsection{Symmetric elliptic difference equations}

Symmetric elliptic difference equations (from now on, abbreviated to \textbf{elliptic equations}) are our main motivation to study discrete equations. They were introduced in \cite{Rains} in order to give an interpretation to the elliptic Painlev\'e equation arising in Sakai's classification of surfaces associated to difference Painlev\'e equations \cite{Sakai}. It was first shown that the differential Painlev\'e equations correspond to isomonodromy deformations of moduli spaces of differential equations \cite{Okamoto}, which are some of the surfaces in the classification. However, not all the surfaces in Sakai's classification arise in this way. Others arise as moduli spaces of discrete equations, such as difference equations \cite{AB}. Symmetric elliptic difference equations complete this picture by providing a moduli interpretation for the elliptic Painlev\'e equation. Concretely, one considers the moduli space of elliptic equations with certain prescribed singularities. This is a motivation to understand singularities of elliptic equations in general.

Elliptic equations arise as follows: discrete equations on the line take the form $y(\tau(x)) = A(x)y(x)$ for some automorphism $\tau$ of $\P^1$. For an elliptic equation, the role of $\tau$ is played by a correspondence in $\P^1\times \P^1$, i.e. a curve $E\subset \P^1\times \P^1$ which is required to have degree $2$ over each component and to be symmetric when the coordinates are interchanged. An elliptic equation is given by a matrix meromorphic function $A:E\to \operatorname{GL}_n(\C)$, and it takes the form $y(t) = A(s,t)y(s)$ whenever $(s,t)\in E$. The matrix $A$ is required to satisfy the relation $A(s,t)=A(t,s)^{-1}$. In this paper we elaborate on (symmetric) elliptic (difference) modules, the counterpart to $D$-modules for this setting.

In the case where $E$ is the union of the graphs of $\tau$ and $\tau^{-1}$ for $\tau\in \Aut(\P^1),\tau^2\neq \Id$, elliptic equations are $\tau$-difference equations on $\P^1$, which include usual difference equations and $q$-difference equations. Further, if $E$ is the nonreduced double diagonal, certain elliptic modules become equivalent to $D$-modules on $\P^1$ (Proposition~\ref{prop:elliptic-Dmod}). Part of the interest on elliptic equations is due to the fact that they can degenerate to all these situations, and that this explains degenerations between surfaces in Sakai's classification.

Elliptic equations can be interpreted as equations on $E$ rather than $\P^1$: the pullback $\wt y(s,t) = y(s):E\to \C^n$ of a solution satisfies the equations $\wt y (s,t)=\wt y(s,t')$ and $\wt y(t,s)=A(s,t)\wt y(s,t)$ for all $s,t,t'\in \P^1$. The involutions $(s,t)\mapsto (s,t')$ and $(s,t)\mapsto (t,s)$ generate a dihedral group $G$ acting on $E$, and the equations satisfied by $y$ can be thought of as describing the $G$-equivariance of $\wt y$. We will adopt the convention that $G$ is always the infinite dihedral group, even if its action on $E$ is not free.

For this reason, in Theorem~\ref{thm:main} we work with groups $G$ which contain $\Z$ as a finite index subgroup, since these include the infinite dihedral group. There are other groups acting on curves that we are not treating in the present paper. For example, if $C$ is a group ($C=\mathbb A^1, \mathbb A^1\setminus 0$ or an elliptic curve), one may consider $r$ many elements in the group, whose translations give rise to an action of $\Z^r$. We believe these generalizations to be akin to letting $C$ be a higher dimensional variety. One reason for this belief is that in characteristic 0 the Fourier transform of \cite{L} is an equivalence between such equivariant sheaves and sheaves on $(\A^1\setminus 0)^r$ with extra structure in the affine case, and if we start with an elliptic curve, we obtain complexes of sheaves on a $(\A^1\setminus 0)^r$-bundle over the (dual) elliptic curve.

As in the case of differential and difference equations, we would like to construct some algebraic objects which encode elliptic equations. Due to the fact that the dihedral group's action on $E$ is not free, we have more than one reasonable way to define ``(symmetric) elliptic (difference) modules'' (see Remark~\ref{rem:possibleEllEq}). We make a choice (Definition~\ref{def:ellEq}), and we show how different options are related. We also show how the singularities of $E$ might come into play.

\begin{THEOREM}\label{thm:ellipticComparison}
Let $E\subseteq \P^1\times \P^1$ be a degree $(2,2)$ symmetric curve with no horizontal or vertical components. Let $\sigma: E\to  E$ be the automorphism interchanging the factors, and let $\sigma_1$ be the nontrivial deck transformation of $\pi_1: E\to \P^1$. Let $G$ be the infinite dihedral group generated by $\sigma$ and $\sigma_1$. Let $Z$ be the singular set of $E$. Let $\wt E$ be the normalization of $E$, and let $G$ also act on $\wt E$ by lifting the action on $E$.

\begin{enumerate}
\item The category of elliptic modules embeds fully faithfully into the category of $G$-equivariant sheaves on $E$.
\item The full subcategory of elliptic modules which are flat at $\pi_1(Z)\subset\P^1$ embeds fully faithfully into the category of $G$-equivariant sheaves on $\wt E$.
\end{enumerate}
\end{THEOREM}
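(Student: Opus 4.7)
My plan is to realize both embeddings as explicit pullback functors and verify full faithfulness by Galois descent along the degree-two covering $\pi_1\colon E\to\P^1$ (resp.\ $\wt\pi_1=\pi_1\circ\nu\colon\wt E\to\P^1$). I will assume that an elliptic module, as defined in the paper, consists of a quasicoherent sheaf $M$ on $\P^1$ together with an isomorphism $A\colon\pi_1^*M\xrightarrow{\sim}\pi_2^*M$ on $E$ satisfying the symmetry $\sigma^*A=A^{-1}$. For part~(1), the functor then sends $M$ to $\pi_1^*M$ equipped with the canonical $\sigma_1$-equivariant structure (from $\pi_1\circ\sigma_1=\pi_1$) and the $\sigma$-equivariant structure given by $A$ under the identification $\sigma^*\pi_1^*M=\pi_2^*M$; the dihedral cocycle condition reduces precisely to $\sigma^*A=A^{-1}$. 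Faithfulness is immediate from surjectivity of $\pi_1$. For fullness, a $\sigma_1$-equivariant morphism between two such pullbacks descends by Galois descent for the double cover $\pi_1$ to a morphism on $\P^1$, and $\sigma$-equivariance of the descended map translates to compatibility with the structure isomorphisms.

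For part~(2), the natural candidate factors as the composition of the part~(1) functor with $\nu^*$. Since part~(1) is already fully faithful, it suffices to show that $\nu^*$ is fully faithful when restricted to $G$-equivariant sheaves of the form $\pi_1^*M$ with $M$ flat at $\pi_1(Z)$. Away from $Z$ the map $\nu$ is an isomorphism, so nothing happens there; near a singular point $p\in Z$, the flatness hypothesis guarantees that $\pi_1^*M$ is torsion-free, and hence its stalk at $p$ is determined by its stalks at the points of $\nu^{-1}(p)\subset\wt E$ together with a gluing identifying them along the formal model of the singularity. The task is to recover this gluing from the $G$-equivariant structure on $\wt E$.

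The key technical step, which I expect to be the main obstacle, is to check that the stabilizer of every $p\in Z$ in $G$ acts transitively on $\nu^{-1}(p)$. A representative local analysis: for a node on the diagonal, in the local model $(s-t)(s+t)=0$ at the origin, the deck transformation $\sigma_1$ swaps the branches $\{t=s\}$ and $\{t=-s\}$; off-diagonal nodes come in $\sigma$-orbits of size two and a similar argument handles them using the combined action of $\sigma$ and $\sigma_1$. Once transitivity is in hand in all cases, $G$-equivariance of a morphism on $\wt E$ forces its values on one branch to determine those on the others compatibly with the gluing, so the morphism descends uniquely to $E$; faithfulness is automatic because $\nu$ is surjective. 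The hypothesis that $E$ has no horizontal or vertical components is what rules out the most degenerate singularity types and should keep this case analysis tractable.
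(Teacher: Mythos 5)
Your plan for part~(1) is essentially sound: the embedding sends $M$ to $\pi_1^*M$ with the two equivariant structures, faithfulness follows from $\pi_1$ being finite flat, and fullness comes from the adjunction $\Hom_E(\pi_1^*M,\pi_1^*N)\cong\Hom_{\P^1}(M,\pi_1{}_*\pi_1^*N)=\Hom(M,N)\oplus\Hom(M,L\otimes N)$ where taking $\sigma_1$-invariants selects the first summand (here $L$ is the $(-1)$-eigensheaf of $\pi_1{}_*\O_E$). This is not literally Galois descent, since $\pi_1$ is ramified, but the morphism-level computation does not feel the ramification; the paper's Lemma~\ref{lem:z2-descent} is doing more (describing the essential image via the condition $Li_Y^*\cA_{\sigma_1}=\Id$), and that finer statement is what you would need to know \emph{which} $G$-equivariant sheaves arise, but it is not needed for bare full faithfulness.

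Part~(2) has a real gap. Transitivity of $\operatorname{Stab}_G(p)$ on $\nu^{-1}(p)$ is the set-theoretic shadow of what is needed, but a morphism on $\P^1\times\P^1$ side is not determined by its values at the reduced points of $\nu^{-1}(p)$; it must respect the nilpotent thickening coming from the conductor of $\nu$. Your ``branch-swapping forces compatibility with the gluing'' claim is precisely the step that needs proof, and in two separate ways you have not supplied it: (a) you never state or invoke a descent theorem along the pinching square for modules on a non-normal curve — this is exactly Ferrand's theorem, and in the paper it enters as Theorem~\ref{thm:Ferrand}; and (b) you never establish the scheme-theoretic identification of the conductor locus with the $\sigma_1$-quotient of the fixed scheme of $\sigma_1\sigma_2$, which is what actually lets you produce the ``$X$-component'' of a descent datum from the $\wt E$-component plus equivariance — this is Lemma~\ref{lem:pinching} in the paper, whose proof is nontrivial and uses perfectness of $k$. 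Moreover, cusps and tacnodes do occur for reduced degree-$(2,2)$ symmetric curves without horizontal or vertical components (for instance $E=\Gamma_\tau\cup\Gamma_{\tau^{-1}}$ with $\tau$ a translation yields a tacnode at infinity, and irreducible arithmetic-genus-one curves can be cuspidal); at a cusp $\nu^{-1}(p)$ is a single point, so your transitivity argument is vacuous there and gives no information about the conductor, yet the conductor is still nontrivial and must be controlled. So the hypothesis ``no horizontal or vertical components'' does not rule out the cases your sketch silently assumes away. Finally, ``faithfulness is automatic because $\nu$ is surjective'' is too quick — $\nu^*$ need not be faithful for a non-flat morphism — although it is salvageable using the flatness-at-$Z$ hypothesis to kill the image of a generically zero morphism; that argument should be made explicit.
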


Theorem~\ref{thm:ellipticComparison} is proven as Propositions~\ref{prop:EllipticModulesAreDihedral} and~\ref{prop:FlatEllipticModulesAreDihedral-normalization}, along with describing the image of the corresponding embeddings. It allows us to compare all the possible definitions in Remark~\ref{rem:possibleEllEq}. We would also like to remark that the second half of the statement cannot be improved to the whole category of elliptic modules: there is a functor defined on the whole category, but it is faithful and not full (Remark~\ref{rem:flatnessIsNeeded-notFull}). This is not surprising, since we use the results on \cite{Ferrand} relating quasicoherent sheaves on a curve and its normalization, which also require flatness.

Using this comparison, we can rephrase Theorem~\ref{thm:main} in the situation of elliptic equations. This is the content of Theorem~\ref{thm:mainThm-elliptic}.

In order to understand elliptic modules as a generalization of ($q$-)difference modules on $\P^1$, one must use the normalization of the singular curve which is the union of the graphs of $\tau$ and $\tau^{-1}$, which motivates the second part of Theorem~\ref{thm:ellipticComparison}. For completeness, we show how all the remaining situations in which the curve $E$ is not integral relate to modules on $\P^1$.



\subsection{Structure of the paper}

Section~\ref{sec:definitionsAndNotation} contains the notation and definitions used throughout the paper.

The main definition of the local information of a discrete equation is presented in Section~\ref{sec:mainDefs}, along with all the precise statements without proofs. All the relevant proofs for this section are in Section~\ref{sec:proof}.

Section~\ref{sec:bigSecElliptic} concerns elliptic modules and all the related notions. The relation between elliptic modules and equivariant sheaves is made precise in Section~\ref{sec:elliptic-equiv}, as well as the relation between elliptic modules on a curve and its localization. Section~\ref{sec:elliptic-discrete} gives explicit descriptions of elliptic modules whenever $E$ is not integral. This includes the relation with equations on the line such as difference equations and differential equations. Except for Theorem~\ref{thm:mainThm-elliptic}, Section~\ref{sec:bigSecElliptic} is independent of Section~\ref{sec:mainThm}.

In Section~\ref{sec:examples} we use the local type and the comparison theorems of Section~\ref{sec:bigSecElliptic} to describe the elliptic modules of generic rank 1, along with their local type.

\subsection*{Acknowledgments}

I am very grateful to Dima Arinkin for suggesting the problem and for many useful discussions. I also wish to thank Eva Elduque for helpful conversations and comments. This work was partially supported by National Science Foundation grant DMS-1603277. This material is based upon work supported by the National Science Foundation under Grant No. 1440140, while the author was in residence at the Mathematical Sciences Research Institute in Berkeley, California, during the spring semester of 2019.

\section{Definitions and notation}\label{sec:definitionsAndNotation}

\subsection{Equivariant sheaves and discrete equations}\label{sec:introToequivariant}

Throughout, we work over a field $k$ of characteristic different from 2. All the sheaves we will consider are quasicoherent. Our main objects of study are equivariant sheaves. Let us recall the definition for convenience.

\begin{definition}\label{def:equivariant}
Let $G$ be a (possibly formal) group acting on a scheme $C$ by a map $\alpha:G\times C\to C$. A ($G$-) equivariant sheaf is a sheaf $M\in \QCoh(C)$ together with an isomorphism $\cA:\pi_C^*M \overset{\sim}\to \alpha^*M$, satisfying the following cocycle condition on $G\times G\times C$: $(m\times \Id_C)^*\cA = (\Id_G\times \alpha)^*\cA \circ \pi_{23}^*\cA$. Here $m$ is the multiplication on $G$ and $\pi_{23}$ is the projection onto $G\times C$ that forgets the first factor. Further, if we let $i\times \Id:C\to G \times C$ be the inclusion of the identity of $G$, we must have that $(i\times \Id)^*\cA = \Id_M$. Morphisms of equivariant sheaves $\Hom^G((M,\cA_M),(N,\cA_N))$ are morphisms of sheaves $\phi:M\to N$ such that $\alpha^*\phi\circ \cA_M = \cA_N\circ \pi_C^*\phi $.
\end{definition}

We will only consider discrete groups $G$ (as a formal scheme, $G \cong \bigsqcup_{g\in G} \Spec k$). In this case, $G\times C = \bigsqcup_{g\in G} C$, so an equivariant sheaf consists of the data of $M\in \QCoh (C)$, together with $\cA_g:M\overset{\sim}\to g^*M$ for every $g\in G$. The cocycle condition becomes the relation $\cA_{g_1g_2} = g_2^*\cA_{g_1}\circ \cA_{g_2}$, and the condition at the identity becomes $\cA_1=\Id$. A morphism of sheaves $\phi$ in this situation is a morphism of equivariant sheaves if and only if for every $g\in G$, $\cA_g\circ \phi = g^*\phi\circ \cA_g$.

Given an equivariant sheaf $M$, we can consider for $g\in G$ the map $(g^{-1})^*\circ \cA_g:M\to M$, which we will simply denote by $\ov g$. This is not a map of sheaves: rather, for every open set $U\subset C$, $\cA_g$ maps $M(U)$ to $g^*M(U)$, and $g^*$ identifies $g^*M(U)$ with $M(gU)$, so $\ov g$ maps sections on $U$ to sections on $gU$. It is also not $\O$-linear, like $\cA_g$ is, but rather if for a local function $f\in \O(U)$ we denote $f^g=f\circ g\in \O(g^{-1}U)$ (this is the right action of $G$ on $\O$), we have the relation
\[
\ov g (fs) = (f\circ g^{-1})\cdot \ov gs = f^{g^{-1}} \ov gs\in M(gU).
\]
We can interpret this as the relation $\ov g f = f^{g^{-1}} \ov g$, or $f\ov g = \ov gf^g$.
 Using this notation, the relation $\cA_{g_1g_2} = g_2^*\cA_{g_1}\circ \cA_{g_2}$ becomes $\ov {(g_1 g_2)}=\ov g_1\circ \ov g_2$: note that for a morphism of sheaves $\phi$, $g^*\phi = g^*\circ \phi \circ (g^{-1})^*$. Therefore, 
\[
\ov {g_1g_2} = (g_1^{-1})^*\circ (g_2^{-1})^*\circ \cA_{g_2g_1} =
 (g_1^{-1})^*\circ (g_2^{-1})^*\circ g_2^*\cA_{g_1}\circ \cA_{g_2}=
(g_1^{-1})^*\circ\cA_{g_1} \circ (g_2^{-1})^*\circ \cA_{g_2} = \ov g_1\circ \ov g_2.
\]
And the same reasoning shows that if $\ov {g_1g_2}=\ov g_1\circ \ov g_2$, then the maps $\cA_g = g^*\circ \ov g$ indeed define an equivariant structure on the sheaf $M$. Using this notation, a morphism of sheaves $\phi$ is a morphism of equivariant sheaves if and only if for every $g\in G$, $\ov g\circ \phi=\phi\circ \ov g$.

In particular, if $G$ is given by generators and relations, the equivariant structure is determined by a collection of isomorphisms $\{\cA_g:M\to g^*M\}$ for $g$ in a generating set of $G$, and a collection of isomorphisms $\{\cA_g\}$ for $g$ in a generating set will determine an equivariant structure if and only if for every relation $g_1\cdots g_m=1$, the corresponding map $\ov g_1\circ \cdots  \circ \ov g_m:M\to M$ is the identity (note that since $g_1\cdots g_m=1$, in this case the map will be an $\O$-linear isomorphism of sheaves).

If the group action is not faithful, we must take care to note which group the equivariant structure is for. For instance, given an automorphism $g$ of $C$ such that $g^2=\Id$, any isomorphism $\cA_g:M\to g^*M$ will give rise to a $\Z$-equivariant structure, where $\Z$ is generated by $g$. However, to obtain a $\Z/2\Z$-equivariant structure, we must also have the relation $\Id = \cA_{g^2} = g^*\cA_g\circ \cA_g$.

\subsubsection{Relation to discrete equations}

Linear recurrences give rise to equivariant sheaves: a linear recurrence for a group action takes the form $s(gx) = A_g(x) s(x)$ for all $g\in G$, where $s$ is a column vector and $A_g$ is an invertible matrix, of size $r$. We must have the conditions that $A_{g_1g_2}(x) = A_{g_1}(g_2x) A_{g_2}(x)
$, and $A_{1}(x)=\Id$. We may construct an equivariant sheaf by interpreting the recurrence as generators and relations: start with a free $\O$-module $F$ with basis $\{s_{i,g}\}$ parametrized by $1\le i\le r$ and $g\in G$. Let $F$ have the equivariant structure given by $\ov g s_{i,h}= s_{i,hg^{-1}}$. Let $A_g = (a_g^{ij})$. We consider the subsheaf $K$ of $F$ generated by the elements
$
\{s_{i,gh}-\sum_{j} a^{ij}_g s_{j,h}\}
$ for all $g,h\in G$ and $i$. Then $M=F/K$ is the desired equivariant sheaf (notice that the equivariant structure preserves $K$). In the category of equivariant sheaves it correpresents the functor of solutions to the recurrence. For example, if the scheme is a complex variety, maps from $M$ to the sheaf of meromorphic functions are the set of meromorphic solutions to the recurrence. Indeed, if $s_{i,1}$ map to certain functions $f_i(x)$, then $\ov{g^{-1}} s_{i,1}=s_{i,g}$ must map to $\ov{g^{-1}} f_i(x) = f_i(gx) $. Therefore the relation $s_{i,gh}=\sum_{j} a^{ij}_g s_{j,h}$ implies that $f_i(ghx)=\sum_{j} a^{ij}_g f_{j}(hx)$. Conversely, any solution to the recurrence will yield a morphism of sheaves using these formulas.

\subsection{Definition of elliptic equations}\label{sec:ellipticEqsDef}


Discrete equations on $\P^1$ are recurrences for an automorphism $\tau$. The jump to elliptic equations consists on replacing a map $\tau$ by a correspondence. Concretely, to mimic the correspondence $\tau \cup \tau^{-1}$, the correspondence must be of degree $2$ and symmetric. Precisely, $E$ must be a curve in $ \P^1\times \P^1$ which has degree $(2,2)$ and is symmetric, i.e. if $\sigma:\P^1\times \P^1$ is the map interchanging the factors, $\sigma(E) = E$. If $E$ is smooth and it has a $k$-valued point, it is an elliptic curve, hence the name. We define elliptic modules to capture the equations on $\P^1$ of the form $f(y) = A(x,y) f(x)$ for $(x,y)\in E$. For discrete equations, the relations induced by $\tau$ and $\tau^{-1}$ must be the same. Similarly, for elliptic equations we must have $A(x,y) = A(y,x)^{-1}$. As stated before, this will be the only kind of elliptic difference equations we refer to in this paper, so we will just refer to them as elliptic equations.

\begin{definition}\label{def:ellEq}
Let $E\subset \P^1\times \P^1$ be a degree $(2,2)$ symmetric curve. Let $\pi_1,\pi_2:E\to \P^1$ be the projections and let $\sigma:E\to E$ be the automorphism interchanging the factors. We assume that the projections $\pi_i$ are finite, i.e. $E$ has no vertical components.

An ($E$-)\textbf{elliptic module}, is a quasicoherent sheaf $M$ on $\P^1$, together with an isomorphism $
\mathcal A: \pi_1^* M \to \pi_2^* M
$, subject to the condition that $\sigma^*\mathcal A = \mathcal A^{-1}$.

We denote the category of $E$-elliptic modules as $\EMod$. A morphism $\phi\in \Hom_{\EMod}(M,N)$ of elliptic modules is a morphism $\phi$ of sheaves on $\P^1$ such that $\cA\circ \pi_1^*\phi = \pi_2^*\phi\circ \cA$.
\end{definition}

At the level of stalks, $\mathcal A$ is an isomorphism $\mathcal A_{x,y}:M_x\to M_y$ whenever $(x,y)\in E$, and $\mathcal A_{x,y} = \mathcal A_{y,x}^{-1}$. These should properly be called \textbf{symmetric elliptic difference modules}, for the following reason: Elliptic difference modules are sheaves on an elliptic curve $E$ equivariant under the translation by a specified point on $E$. In our situation, if we choose the origin of $E$ to be a ramified point of $\pi_1$, then $\pi_1$ identifies every point on $E$ with its opposite according to the group law of $E$. Since (symmetric) elliptic modules come from $\P^1$, their stalks at both points on the fibers of $\pi_1$ are identified, hence the name symmetric. A precise statement is provided by Proposition~\ref{prop:EllipticModulesAreDihedral}.

\begin{remark}
This is not the only possible notion of elliptic modules. If one adopts the idea that the unknown in an elliptic equation $f(x)=A(x,y)f(y)$ is a function $g(x,y)=f(x):E\to k$, then one should study sheaves on $E$. We could have defined an elliptic module as follows: let $\sigma:E\to E$ interchange the factors, let $\sigma_1$ be the automorphism $(x,y)\to (x,y')$ and let $G$ be the infinite dihedral group they generate. An alternative definition is as $G$-equivariant sheaves on $E$. We will see in Proposition~\ref{prop:EllipticModulesAreDihedral} that elliptic modules as we have defined them are a full subcategory of this category.
\end{remark}

\section{The local type}\label{sec:mainThm}
\subsection{Notation}

We consider the action of a group $G$ that is an extension of a finite group by $\Z$. Note that this includes all groups $G$ containing subgroups $H_1\triangleleft H_2 \triangleleft G$ such that $H_1$ and $G/H_2$ are finite and $H_2/H_1\cong \Z$: the projection $H_2\to\Z$ necessarily has a section, so $\Z$ is a finite index subgroup of $G$, and some finite index subgroup of this $\Z$ is a normal subgroup of $G$. Throughout, we will let $\tau\in G$ be a chosen generator of a normal finite index subgroup isomorphic to $\Z$.

Throughout we will let $C$ be a (possibly singular, possibly reducible, reduced, quasiprojective) curve over $k$ with an action of $G$. 
We will study $G$-equivariant quasicoherent sheaves on $C$. We will say a sheaf $M$ is generically finitely generated if the stalks at every generic point of $C$ are finitely generated, or equivalently if it contains a coherent sheaf $L$ such that $M/L$ is supported only on closed points. We denote the category of equivariant sheaves by $\Mod(C)$, and the full subcategory of generically finitely generated elliptic modules by $\Hol(C)$.

\subsection{Definitions}\label{sec:mainDefs}


We will let $p\in C$ be a closed point, and $\St_p<G$ be its stabilizer (the stabilizer of the closed point, i.e. of the corresponding ideal). Depending on whether $\St_p$ contains an infinite order element, $\St_p$ is either finite or it has finite index. We let $\St_p^* = \{h\in \St_p:h\tau h^{-1} =\tau \}$. Note that either $\St_p^*=\St_p$ or it is a subgroup of index $2$. Throughout, we distinguish three cases:
\begin{enumerate}[label=(\roman*)]
\item $\St_p$ is finite and $\St_p^*=\St_p$.
\item $\St_p$ is finite and $\St_p^*\neq \St_p$.
\item $[G:\St_p]$ is finite.
\end{enumerate}

Note that in situations (i) and (ii), $p$ must be a smooth point, as it has an infinite orbit.

\begin{definition}\label{def:puncturedCurve}
We let $\As = C\setminus Gp$. $\Mod(\As)$ is defined as the full subcategory of $\Mod(C)$ on which functions vanishing only on $Gp$ act as units, or equivalently as the category of $G$-equivariant quasicoherent sheaves on $C\setminus Gp$. The full subcategory of generically finitely generated modules is denoted $\Hol(\As)$. We denote the forgetful functor $j_*:\Mod(\As)\to \Mod(C)$, and we use the same notation for its restriction $\Hol(\As)\to \Hol(C)$.

The pushforward $j_*$ has a right adjoint $j^*$, which is given by pullback of quasicoherent sheaves to $\As$, endowed with the natural $G$-action.
\end{definition}

In what follows, we will let $R_p$ be the complete local ring at $p$, a local ring of dimension $1$, and $U_p=\Spec R_p$. We will let $K_p$ be its total ring of fractions, i.e. the direct sum of the function fields of its minimal primes. If $R_p$ is a domain, for example if $p$ is smooth, then $K_p$ is the fraction field of $R_p$. Note that $\St_p$ acts on $\Spec R_p$ by restricting the action on $C$, so we may talk of $\St_p$-equivariant modules on $U_p$.

\begin{definition}
The category of local types of equivariant sheaves is defined as follows, in cases (i), (ii) and (iii) above:
\begin{enumerate}[label=(\roman*)]
\item $\Mod(\pz)$ is the category of $R_p$-modules $M$, together with the additional information of two finite rank free submodules $M^l,M^r\subseteq M$, such that $M/M^l,M/M^r$ are supported on $p$. Additionally, $M$ is $\St_p$-equivariant, and the action of $\St_p$ preserves $M^{l}$ and $M^{r}$. Morphisms in $\Mod(\pz)$ are morphisms of equivariant $R_p$-modules which preserve the chosen submodules.
\item $\Mod(\pz)$ is the category of $R_p$-modules $M$, together with a single finite rank free submodule $M^{lr}$, as above, such that $M/M^{lr}$ is supported on $p$. Additionally, $M$ is $\St_p$-equivariant, and the action of $\St_p$ preserves $M^{lr}$. Morphisms are defined analogously.
\item $\Mod(\pz)$ is the category of $\St_p$-equivariant $R_p$-modules.
\end{enumerate}
\end{definition}

We will often write $M^\star$ to denote either one of $M^l$, $M^r$ or $M^{lr}$ to avoid repetition.

\begin{remark}
In cases (i) and (ii), $\Loc$ is not an abelian category, because not all morphisms have cokernels: a map $\phi:M\to N$ could have the property that $N^l/(\phi M^l)$ is not free, or the map into $N/\phi(M)$ might not be injective. However, it is an exact category, because it is a full subcategory of the abelian category of diagrams of equivariant $R_p$ modules with no restrictions about the arrows being injective or the modules being free, and it is closed under extensions. In particular, short exact sequences in $\Loc$ are short exact sequences of $R_p$-modules $M_1\to M_2\to M_3$ for which all the sequences of the form $M_1^\star\to M_2^\star\to M_3^\star$ are exact; and whenever kernels or cokernels exist, they can be computed in the larger category of diagrams.
\end{remark}

\begin{definition}
We define the categories of punctured local types of equivariant sheaves as the full subcategory $\Mod(\pzz)\subset \Mod(\pz)$ consisting of modules $M$ such that the natural map is an isomorphism $K_p\otimes_{R_p} M \cong M$ of $R_p$-modules. The forgetful functor will be denoted $j_*$.

The left adjoint to $j_*$ is denoted by $j^*$, and it is given by $j^*M = K_p\otimes_{R_p} M$, with $(j^*M)^\star$ defined to be the image of $M^\star$ inside of $j^*M$.
\end{definition}

We now define the restriction to the formal disk. From now on, we will denote $M_p = R_p\otimes M$ (where the tensor is over the stalk of $\O$ at $p$).

\begin{definition}\label{def:mainDef}
The restriction to the formal disk is defined in the following ways:
\begin{itemize}
\item[(i),(ii)] Let $M\in \Hol(C)$. Choose (arbitrarily) some coherent sheaf $L\subseteq M$ such that $M/L$ is supported on closed points. We define $M|_\pz\in \Mod(\pz)$ by $M|_\pz = M_p$. In case (i) we make $M|_\pz^l = (\ov\tau^n L)_p$ for $n\gg 0$, and $M|_\pz^r = (\ov\tau^{-n}L)_p$ for $n\gg 0$, and in case (ii) we let $M|_\pz^{lr} = (\ov\tau^n L)_p$ for $n\gg 0$. Then $\St_p$ acts on $M|_\pz$ via the restriction.
\item[(iii)] The restriction $|_\pz:\Mod(C)\to \Mod(\pz)$ consists of making $M|_\pz = M_p$ and restricting the action of $\St_p$.
\end{itemize}
\end{definition}

We show that the restriction is well-defined independently of choices in Proposition~\ref{prop:wellDefined}. Note that in cases (i) and (ii) $|_\pz$ is defined on generically finitely generated modules, while in case (iii) we can extend the definition to all modules. From now on, we will refer to $\Hol(C)$ in all three cases. All our proofs will extend to arbitrary modules in case (iii).

\begin{remark}
Note that $|_\pz$ maps $\Hol(\As)$ into $\Mod(\pzz)$. Further, the following square commutes (up to natural isomorphism).
\begin{equation}\label{eq:fiberDiagram}
\begin{tikzcd}[column sep = 3 em, ampersand replacement = \&]
\Hol(C)\arrow[r,"j^*"]\arrow[d,"|_\pz"] \&
\Hol(\As)\arrow[d,"|_\pz"] \\
\Mod(\pz) \arrow[r,"j^*"]\&
\Mod(\pzz)
\end{tikzcd}
\end{equation}
\end{remark}

\begin{theorem}\label{thm:fiberFinSt}[Theorem~\ref{thm:main}]
The diagram (\ref{eq:fiberDiagram}) is a cartesian square of categories.

More explicitly, it induces an equivalence between $\Hol(C)$ and the category $\Loc\times_\Locc\Hol(\As)$. This is the category of triples $(M_\pz,M_\As,\cong)$, consisting of objects $M_\pz\in \Mod(\pz)$, $M_\As\in \Hol(\As)$ and a fixed isomorphism $j^*M_\pz\cong M_\As|_\pz$. A morphism between two triples $f:(M_\pz,M_\As,\cong)\to (N_\pz,N_\As,\cong)$ is a pair of morphisms $f_\pz:M_\pz\to N_\pz$ and $f_\As:M_\As\to N_\As$ that commutes with the isomorphims.
\end{theorem}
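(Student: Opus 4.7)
The plan is to construct an explicit quasi-inverse
\[
\Phi:\Loc\times_\Locc\Hol(\As)\longrightarrow \Hol(C)
\]
to the restriction functor $\Psi=(|_\pz,j^*)$, and verify the two natural isomorphisms $\Phi\circ\Psi\cong\Id$ and $\Psi\circ\Phi\cong\Id$. The construction is essentially an equivariant Beauville--Laszlo glue performed along the orbit $Gp$, with the additional framing data $M^\star$ used in cases (i) and (ii) to recover the ambient coherent subsheaf. I would handle case (iii) first, where the orbit $Gp$ is a finite set of closed points, and then adapt the argument to the more delicate cases (i) and (ii).

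For case (iii): Given a triple $(M_\pz,M_\As,\iota)$, forget equivariant structure and apply the classical Beauville--Laszlo theorem of \cite{beauvilleLaszlo} at each of the finitely many points of $Gp$ to obtain a quasicoherent sheaf $M$ on $C$ together with canonical isomorphisms $M|_{\pz}\cong M_\pz$ (as $R_p$-modules) and $j^*M\cong M_\As$. Explicitly, $M$ can be realized as the fibered product $j_*M_\As\times_{j_*j^*M_\pz}M_\pz$ viewed as a sheaf via the natural maps. The $\St_p$-equivariance of $M_\pz$ and the $G$-equivariance of $M_\As$, together with the $G$-equivariant identification $\iota$ transported by the $G$-action to each orbit point, uniquely assemble into a $G$-equivariant structure on $M$; the cocycle condition follows from the cocycle conditions on each factor.

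For cases (i) and (ii) the orbit $Gp$ is infinite, but every point in $Gp$ is smooth and has finite stabilizer, so the formal neighborhoods of distinct orbit points are disjoint. I would glue at $p$ exactly as above to produce a sheaf on the Zariski open $C\setminus (Gp\setminus\{p\})$, then transport this sheaf via $\ov g$ for $g$ ranging over coset representatives of $\St_p$ in $G$ to produce an equivariant sheaf on all of $C$. To recover a coherent subsheaf $L\subseteq M$ witnessing generic finite generation, take the free $R_p$-submodule $M^\star$ (that is, $M^l$ or $M^{lr}$) and extend it by means of $\iota$ to a coherent subsheaf $L_0$ on some Zariski open $V\ni p$ of $C$ with $V\cap Gp=\{p\}$; then $L:=\bigcup_{n\in\Z}\ov{\tau}^n L_0$ is a coherent subsheaf of $M$ on $C\setminus(Gp\setminus\{p\})$ whose stalks at $\tau^{-n}p$ match $(\ov\tau^n L)_p=M^l$ (resp.\ $M^{lr}$) for $n\gg 0$. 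Propagating by coset representatives and using the $\St_p^*=\St_p$ (resp.\ $\St_p^*\ne\St_p$) case distinction to handle $M^r$ (which encodes the behavior as $n\to-\infty$) yields the required coherent subsheaf globally, and $M/L$ is supported on closed points by construction.

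Verifying $\Psi\circ\Phi\cong\Id$ reduces to the three tautologies: the Beauville--Laszlo construction recovers $M_\pz$ on the formal disk and $M_\As$ on the punctured scheme, and in cases (i)--(ii) the stalks $(\ov\tau^n L)_p$ stabilize to $M^l$ (resp.\ $M^r$, $M^{lr}$) by design. For $\Phi\circ\Psi\cong\Id$ one uses that any $M\in\Hol(C)$ sits in the natural fibered product $j_*j^*M\times_{j_*j^*M_p}M_p$ (again by \cite{beauvilleLaszlo}, now applied to the underlying non-equivariant sheaf at each orbit point), and the equivariant structures match on both sides. The main obstacle is the well-definedness of $|_\pz$ in cases (i) and (ii), i.e.\ Proposition~\ref{prop:wellDefined}: one must check that $(\ov\tau^n L)_p$ stabilizes for $n\gg 0$ independently of the choice of $L$. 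The key point is that for two choices $L,L'$ with $L\subseteq L'$ and $L'/L$ supported on finitely many closed points, their $\ov\tau^n$-translates agree at $p$ once $n$ is large enough to move the common support off the formal neighborhood of $p$; since the orbit is free (or almost so) in these cases, such an $n$ always exists, and standard cofinality reduces the general case to this one.
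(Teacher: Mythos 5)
Your approach via Beauville--Laszlo gluing is genuinely different from the paper's, which instead constructs an explicit right adjoint $\iota_*$ to $|_\pz$ (given by a formula involving $G$-indexed sequences $(m_g)_{g\in G}$), defines $\Psi$ as an equalizer, and then runs a five-lemma argument through the recollement exact sequences $0\to i_!i^!M\to M\to \ov M\to 0$ etc., with the key technical input being Lemma~\ref{lem:exactness}. The Beauville--Laszlo idea is reasonable in spirit, and in case (iii) it can be carried out because $Gp$ is a finite set of closed points, hence closed, so $C\setminus(Gp\setminus\{p\})$ is open.

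The proposal has a genuine gap exactly in cases (i) and (ii), which is where the content of the theorem lies. There $Gp$ is an \emph{infinite} discrete set of closed points, so $Gp\setminus\{p\}$ is not closed and $C\setminus(Gp\setminus\{p\})$ is \emph{not} a Zariski-open subscheme of $C$; the proposed ``glue at $p$ to get a sheaf on $C\setminus(Gp\setminus\{p\})$'' is not a well-posed operation, and translating by $\ov g$ does not fix this. One would instead need to pick a genuine affine open $V\ni p$ with $V\cap Gp=\{p\}$, Beauville--Laszlo glue on $V$, translate, and then re-glue all the translates with $M_\As$ along their pairwise overlaps (checking a cocycle condition on triple overlaps, all of which sit in $\As$); none of this is addressed, and this is precisely the combinatorics that the paper's $\iota_*$ formula packages. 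A second unresolved point is the claim that $L:=\bigcup_{n}\ov\tau^n L_0$ is a coherent subsheaf of $M$: a union of translates of a coherent subsheaf need not be coherent, and the whole role of the extra data $M^\star$ is to guarantee that the translated stalks \emph{stabilize}; your argument asserts stabilization by fiat rather than deriving it, and in fact Proposition~\ref{prop:wellDefined} is the nontrivial part here. Finally, the two verifications $\Psi\circ\Phi\cong\Id$ and $\Phi\circ\Psi\cong\Id$ are dismissed as ``tautologies,'' but the paper needs a real argument --- in particular that $\Psi$ remains exact on the sequence $0\to i_!i^!M\to M\to\ov M\to 0$ --- and your sketch does not engage with this.
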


Let us denote $\mathcal C = \Loc\times_\Locc\Hol(\As)$, and let $\Phi:\Hol(C)\to \mathcal C$ be the induced functor. In Section~\ref{sec:proof} we will build the necessary tools to construct an inverse to $\Phi$.

\begin{remark}
In case (iii), Theorem \ref{thm:fiberFinSt} holds after replacing $\Hol(C)$ by $\Mod(C)$. We do not use the generically finitely generated assumption anywhere, except to be able to define $|_\pz$ in cases (i) and (ii). We will keep referring to $\Hol(C)$ everywhere to simplify notation.
\end{remark}

\subsection{Proof of the main theorem}\label{sec:proof}
\begin{proposition}\label{prop:wellDefined}
The functor $|_\pz$ has the following properties. Note that in case (iii) all of them are clear or vacuous. We use $\star$ to denote any of $l$, $r$ or $lr$.
\begin{enumerate}
\item Its definition has no ambiguity, i.e. $|_\pz$ doesn't depend on the coherent sheaf $L\subseteq M$ as long as $M/L$ is supported on closed points and $n$ is chosen to be big enough (depending on the choice of $L$). Further, $\St_p$ preserves $M|_\pz^{\star}$.
\item $M|_\pz^\star$ is indeed a finite rank free module and $M|_\pz/M|_\pz^\star$ is supported on $p$.
\item The functor $|_\pz$ maps morphisms in $\Hol(\ddA)$ to morphisms in $\Loc$, i.e. for a morphism $f:M\to N$, $f(M|_\pz^\star) \subseteq N|_\pz^\star$. Further, $f|_\pz$ is $\St_p$-equivariant.
\item It is an exact functor, in the sense of exact categories: it maps short exact sequences to short exact sequences.
\item Let $f:M\to N$ be a morphism in $\Hol(\ddA)$. Then the restriction $f|_\pz^{\star} :M^{\star}\to N^{\star}$ is a morphism of free $R_p$-modules that has constant rank, i.e. its cokernel is a free module. Further, $N^\star/f|_\pz M^\star$ embeds into $N/f|_\pz M$, so $\coker f$ is an object of $\Loc$.
\item $G$ preserves $|_{\pz}$ in the following sense: for every $g\in G$, the induced map $\ov g:M_p\to M_{gp}$ sends $M|_{\pz}^\star$ to $M|_{U_{gp}}^\star$ if $g\tau = \tau g$ and it interchanges $M|_{\pz}^l$ with $M|_{\pz}^r$ if $g\tau = \tau^{-1} g$. In both cases it preserves $M|_{\pz}^{lr}$.
\end{enumerate}
\end{proposition}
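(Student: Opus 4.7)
The plan is to verify the six properties in order, relying throughout on two basic facts: in cases (i) and (ii) the point $p$ is smooth, so $R_p$ is a DVR, and the $\tau$-orbit of $p$ is infinite, so $\tau^{-n}p$ eventually leaves any finite subset of $C$. For independence from $L$ in (1), I would compare any two valid choices $L, L'$ via the coherent sum $L+L'$: each quotient $(L+L')/L$ and $(L+L')/L'$ is coherent (as a coherent quotient of a coherent sheaf) and zero-dimensionally supported (it vanishes at generic points), hence has finite support by Noetherianity of $C$. For $n\gg 0$ the point $\tau^{-n}p$ avoids these finite sets, so the stalks agree and $(\ov\tau^nL)_p=(\ov\tau^nL')_p$. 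The same comparison between $L$ and $\ov\tau L$ yields stabilization in $n$. The freeness claim in (2) then follows because the torsion subsheaf of $L$ has finite support, so for $n\gg 0$ the stalk $L_{\tau^{-n}p}$ is torsion-free over the local DVR, hence free of rank equal to the generic rank $r$ of $M$; the twisted bijection $\ov\tau^n$ transports this to a free rank-$r$ $R_p$-submodule of $M_p$, with quotient supported at $p$ by comparing generic ranks after tensoring with $K_p$.

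For the $\St_p$-preservation claim in (1), I would first replace $L$ by its $\St_p$-saturation $\sum_{g\in\St_p}\ov gL$, which is coherent (using finiteness of $\St_p$ in cases (i) and (ii)), has quotient supported on closed points, and is $\St_p$-invariant; by the just-proved independence, this does not change $M|_\pz^\star$. For $h\in\St_p^*$ the commutation $\ov h\ov\tau^n=\ov\tau^n\ov h$ gives $\ov hM|_\pz^\star=(\ov\tau^n\ov hL)_p=(\ov\tau^nL)_p$. The subtle case is (ii) with $h\in\St_p\setminus\St_p^*$: here $\ov h\ov\tau^n=\ov\tau^{-n}\ov h$ together with $\ov hL=L$ gives $\ov hM|_\pz^{lr}=(\ov\tau^{-n}L)_p$, so $\St_p$-invariance reduces to the equality $(\ov\tau^nL)_p=(\ov\tau^{-n}L)_p$ for $n\gg 0$. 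I expect this to be the main obstacle; my plan is to argue that for $\St_p$-invariant $L$, both submodules arise as the stable rank-$r$ free submodule of $M_p$ obtained by transporting $L$ from the two ends of the $\tau$-orbit back to $p$, and that the $\ov h$-symmetry linking the two ends forces these transported submodules to coincide inside $M_p$.

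Parts (3)--(6) are formal once this setup is in place. For (3), a morphism $f\colon M\to N$ sends $M|_\pz^\star$ into $N|_\pz^\star$ after enlarging the chosen coherent $L_N$ to $L_N+f(L_M)$ (permitted by (1)), and $\St_p$-equivariance of $f|_\pz$ is immediate from restriction. Exactness (4) follows from the exactness of completion combined with the description of short exact sequences in $\Loc$ as stalkwise exact sequences compatible with all the $\star$-submodules. The constant-rank assertion in (5) holds because $f|_\pz^\star$ is a map of free $R_p$-modules induced by $f$ on free stalks far along the orbit, and its cokernel embeds into $N_p/f(M_p)$, which is still supported at $p$. Finally, (6) is a direct computation: $\ov g$ induces a bijection $M_p\to M_{gp}$, and the conjugation relation $g\tau g^{-1}=\tau^{\pm 1}$ translates directly into the transformation rules for $l$, $r$, and $lr$ after noting that $\ov gL$ is a valid coherent subsheaf through which to compute the local type at $gp$.
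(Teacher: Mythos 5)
Your proposal follows the paper's proof quite closely for all six parts, and the outline of parts (2)--(4) and (6) is sound. Two points deserve comment.

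First, the subtlety you flag is exactly the one subtle point, and your proposed plan does not close it. For $h\in\St_p\setminus\St_p^*$ and $\St_p$-invariant $L$, the relation $\ov h\ov\tau^n = \ov\tau^{-n}\ov h$ gives $\ov h\bigl((\ov\tau^nL)_p\bigr) = (\ov\tau^{-n}L)_p$, so the needed equality $(\ov\tau^nL)_p = (\ov\tau^{-n}L)_p$ (for $n\gg 0$) is \emph{equivalent} to $\ov h$ preserving $M|_\pz^{lr}$. Your phrase ``the $\ov h$-symmetry linking the two ends forces these transported submodules to coincide'' restates the claim rather than proving it, since the $\ov h$-symmetry is precisely what produces the two submodules as images of each other under the (generally nontrivial) $h^*$-semilinear map $\ov h:M_p\to M_p$. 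A completed argument must explain why $\ov h$, which a priori sends the $+\infty$-limit lattice to the $-\infty$-limit lattice, fixes it; this requires an additional idea and does not follow from the symmetry alone. (For what it is worth, the paper's own chain of equalities is terse here: the step $(\ov\tau^{-n}L)_{hp} = (\ov h^{-1}\ov\tau^{-n}L)_p$ is a transport via $\ov h^{-1}$ rather than a set-theoretic identity of submodules of $M_p$, and unwinding it returns to the same assertion. So you have put your finger on a real point of tension, but you should not expect the resolution to be merely ``formal.'')

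Second, a smaller issue in part (5): the observation that $N^\star/f(M^\star)$ embeds into $N_p/f(M_p)$, which is supported at $p$, does not by itself force $N^\star/f(M^\star)$ to be free. The paper instead factors $f$ through its image, applies the exactness from part (4) to identify $\coker(f|_\pz)$ with $(\coker f)|_\pz$, and then invokes part (2) applied to $\coker f\in\Hol(C)$ to conclude that $(\coker f)|_\pz^\star = N^\star/f(M^\star)$ is a finite rank free module. You should route the argument through this identification rather than through the embedding alone.
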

\begin{proof}
\begin{enumerate}
\item Let $L_1,L_2$ be two coherent subsheaves of $M$ such that $M/L_1,M/L_2$ are supported on closed points. For $i=1,2$, $(L_1+L_2)/L_i$ is a coherent sheaf supported on closed points, and hence a finite length sheaf. This implies that the stalks of $L_1$ and $L_2$ can only differ at a finite set of points. Also notice that $\ov g$ identifies $ L_{g^{-1}p}$ and $(\ov gL)_p $ as $\ov g$ identifies $M_{g^{-1}p}$ with $M_p$. Applying this to $L_1=L$ and $L_2=\ov\tau L$, $(\ov\tau^{\pm n} L)_p = (\ov\tau^{\pm n+1} L)_p$ for $n\gg 0$, which shows that the definition doesn't depend on $n$ as long as it is big enough. Similarly, if two different coherent subsheaves are chosen then their stalks will be equal at $\ov\tau^{n}p$ as long as $|n|\gg 0$.

Let us show the $\St_p$-invariance. Start by assuming that $h\in \St_p^*$. Then $\ov h L$ and $L$ agree away from a finite set, so we may assume $L$ is $\ov h$-invariant: since we are in situations (i) or (ii), $\St_p$ is finite, so for some $m$ we have $h^m=1$ and we can replace $L$ by $L+\ov h L+\cdots +\ov h^{m}L$. Then we can see that for $n\gg 0$,
\[
\ov h M|_\pz^{\star} = \ov h (\ov\tau^{\pm n} L)_p = (\ov h\ov\tau^{\pm n} L)_p \overset{h\in \St_p^*}=(\ov\tau^{\pm n}\ov h L)_p = (\ov\tau^{\pm n} L)_{ p} =M|_\pz^{\star}.
\]
Lastly, if $h\in \St_p^*\setminus \St_p$, as before we may assume $L$ is $\ov h$-invariant. Then,
\[
\ov h M|_\pz^{lr} = \ov h (\ov\tau^{n} L)_p = (\ov\tau^{- n}\ov h L)_p = (\ov\tau^{- n} L)_{ p} =(\ov\tau^{- n} L)_{h p} =(\ov h^{-1}\ov\tau^{- n} L)_{p} = (\ov\tau^n L)_p =M|_\pz^{lr}.
\]

\item Since $L$ is a coherent sheaf, its torsion has finite support, so $M|_\pz^\star=(\ov\tau^{\pm n} L)_p=(L)_{\tau^{\mp n}p}$ is torsion-free for $|n|\gg 0$, and it is finitely generated since $L$ is. Further, since $M/\ov\tau^{\pm n} L$ is torsion, $M|_\pz/M|_\pz^\star = (M/\ov\tau^{\pm n}L)_p$ is torsion as well.
\item Let $L\subseteq M$ be a coherent sheaf such that $M/L$ is supported on closed points. Then $f(L)\subseteq N$ is coherent, so we may choose some coherent $L'\supseteq f(L)$ such that $N/L'$ is supported on closed points. Then for $n\gg 0$, $f|_\pz(M|_\pz^\star) \coloneqq (\ov\tau^{\pm n} f(L))_p\subseteq (\ov\tau^{\pm n} L')_p \coloneqq N|_\pz^\star$. The equivariance of the map is straightforward.

\item Given a short exact sequence $0\to M\to N\to P \to 0$ in $\Hol(C)$, take a coherent $L_N\subseteq N$ such that $N/L_N$ is supported on closed points. Then both $L_M = L\cap M\subseteq M$ and $L_P = L_N/L_M\subseteq P$ have the analogous property in $M$ and $P$ respectively. The short exact sequence $L_N\to L_M\to L_P$ yields the desired statement after applying $\ov\tau^{\pm n}$ and taking formal fibers.

\item Decompose $f$ as an epimorphism followed by a monomorphism, so that we have two short exact sequences: $0\to \ker f\to M\to f(M)\to 0$ and $0\to f(M)\to N\to \coker f\to 0$. Then the exactness of $|_\pz$ implies that the cokernel of $f|_\pz$ is $(\coker f)|_\pz$, which is an object of $\Loc$, so in particular $N^\star/f(M^\star) = (\coker f)|_\pz^\star$ is free and it embeds into $N/f(M)=(\coker f)|_\pz$.

\item Let us show this for $\star=l$, and the other cases will be analogous. Let $L\subseteq M$ be a coherent subsheaf such that $M/L$ is supported on closed points, and let $\epsilon$ be such that $g\tau = \tau^\epsilon g$. Then, if $n$ is big enough (depending on both $L$ and $g$), we can use the already proved independence of $L$ to show:
\[
\ov g(M|_{\pz}^l) \overset{n\gg 0}= \ov g((\ov\tau^n L)_p) = (\ov g\ov\tau^n L )_{gp} = (\ov\tau^{\epsilon n} \ov g L )_{gp} \overset{n\gg 0}=\left\{\begin{array}{rcl}
M|_{U_{gp}}^l & \text{if} & \epsilon = 1;\\
M|_{U_{gp}}^r & \text{if} & \epsilon = -1.
\end{array}
\right.
\]
\end{enumerate}
\end{proof}


\begin{proposition}
$\mathcal C$ is an abelian category.
\end{proposition}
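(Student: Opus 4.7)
The plan is to verify the abelian-category axioms on $\mathcal C$ by constructing kernels, cokernels, and biproducts coordinate-wise and checking compatibility via the exactness of the restriction functors $j^*$ and $|_\pz$. The additive structure is immediate: the zero object is $(0,0,\mathrm{id})$ and the biproduct of $(M_\pz, M_\As, \phi_M)$ and $(N_\pz, N_\As, \phi_N)$ is $(M_\pz \oplus N_\pz, M_\As \oplus N_\As, \phi_M \oplus \phi_N)$, using that both $j^*$ and $|_\pz$ preserve finite direct sums.

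For the kernel of a morphism $f = (f_\pz, f_\As)$ in $\mathcal C$, I take $K_\As := \ker f_\As$ in the abelian category $\Hol(\As)$, and $K_\pz$ with underlying $R_p$-module $\ker f_\pz$ (with induced $\St_p$-action). In case (iii) nothing more is needed. In cases (i)-(ii) the orbit of $p$ is infinite, so $p$ is smooth and $R_p$ is a complete DVR; I set $K_\pz^\star := \ker f_\pz \cap M_\pz^\star$, which is a submodule of the finite-rank free module $M_\pz^\star$ over a PID and hence itself free of finite rank. Since $M_\pz/M_\pz^\star$ is $\mathfrak m$-primary torsion, flatness of $K_p$ gives $K_p \otimes K_\pz^\star = K_p \otimes K_\pz$, so $K_\pz/K_\pz^\star$ injects into $M_\pz/M_\pz^\star$ and is supported at $p$. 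The compatibility isomorphism $j^*K_\pz \cong K_\As|_\pz$ in $\Mod(\pzz)$ follows from flatness of $K_p$ over $R_p$ together with the exactness of $|_\pz$ on $\Hol(\As)$ established in Proposition~\ref{prop:wellDefined}(4).

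Cokernels are constructed analogously: take $Q_\As := \coker f_\As$ and $Q_\pz := \coker f_\pz$. In case (iii) this is enough. In cases (i)-(ii), the image $I$ of $N_\pz^\star$ in $Q_\pz$ is a finitely generated $R_p$-module with $K_p \otimes I = K_p \otimes Q_\pz$. Over the DVR $R_p$ it splits as $I = I_{\mathrm{free}} \oplus I_{\mathrm{tors}}$, and I set $Q_\pz^\star$ to be a free lift of $I/I_{\mathrm{tors}}$ inside $I \subseteq Q_\pz$, so that $Q_\pz/Q_\pz^\star$ fits into an extension of $Q_\pz/I$ (a quotient of $N_\pz/N_\pz^\star$, hence supported at $p$) by $I_{\mathrm{tors}}$ (also supported at $p$). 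The remaining axiom---every monomorphism is a kernel and every epimorphism a cokernel---is then verified coordinate-wise, using the corresponding property in $\Hol(\As)$.

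The main obstacle I anticipate is making the cokernel construction in cases (i) and (ii) functorial, since the splitting $I = I_{\mathrm{free}} \oplus I_{\mathrm{tors}}$ is not canonical. I would handle this either by arguing that any two valid choices of $Q_\pz^\star$ produce canonically isomorphic objects of $\mathcal C$, or by pinning down the choice intrinsically using the $\As$-side identification $j^*Q_\pz \cong Q_\As|_\pz$, whose $\star$-part in $\Mod(\pzz)$ is the whole module $Q_\As|_\pz$ itself.
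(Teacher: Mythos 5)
The kernel construction and the overall plan are fine, but the cokernel step has a genuine gap, and it is exactly the step where the fiber-product structure of $\mathcal C$ has to do real work.

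Your proposed $Q_\pz^\star$ is a free lift of $I/I_{\mathrm{tors}}$ inside $I$, where $I$ is the image of $N_\pz^\star$ in $Q_\pz=\coker f_\pz$. If $I$ has torsion, the natural projection $N_\pz\to Q_\pz$ sends $N_\pz^\star$ onto all of $I$, not into your chosen $Q_\pz^\star$, so the would-be cokernel map $N\to Q$ is not a morphism of $\Loc$ at all. The splitting $I=I_{\mathrm{free}}\oplus I_{\mathrm{tors}}$ being non-canonical is a secondary worry; the primary one is that for a general morphism $f_\pz$ in $\Loc$ the module $I=N_\pz^\star/(N_\pz^\star\cap f_\pz(M_\pz))$ can indeed have torsion, and then no cokernel exists in $\Loc$. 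What saves the day is precisely the compatibility datum: since $M_\pz^\star$ is determined by $j^*M_\pz\cong M_\As|_\pz$, the map $f_\pz^\star$ is identified with $(f_\As|_\pz)^\star$, and Proposition~\ref{prop:wellDefined}(5) applied to the morphism $f_\As\in\Hol(\As)$ says both that $\coker f_\pz^\star$ is free (constant rank) and that $N_\pz^\star/f_\pz(M_\pz^\star)$ embeds into $N_\pz/f_\pz(M_\pz)$. In other words, $I_{\mathrm{tors}}=0$ automatically, the correct $Q_\pz^\star$ is simply $N_\pz^\star/f_\pz(M_\pz^\star)$, and no ad-hoc lift is needed. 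Your closing remark about pinning the choice down via the $\As$-side identification is pointing in the right direction, but it needs to become the main argument, not a patch.

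On overall strategy, the paper takes a shortcut you might find worth adopting: it embeds $\mathcal C$ as a full subcategory of $\wt{\mathcal C}=\mathcal D(\pz)\times_{\mathcal D(\pzz)}\Hol(\As)$, where $\mathcal D(\pz)$ is the ambient abelian category of arbitrary diagrams of equivariant $R_p$-modules with no freeness or injectivity constraints. Since $\wt{\mathcal C}$ is a fiber product of abelian categories along exact functors, it is abelian, and one only has to check that $\mathcal C$ is closed under kernels and cokernels inside it. That reduces the verification to a single point (existence of a well-behaved $\coker f_\pz$), which is exactly the step above, instead of checking all of the abelian-category axioms coordinate-wise.
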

\begin{proof}
Let $\mathcal D(\pz)\supset \Loc$ be the abelian category of diagrams of $\St_p$-equivariant $R_p$-modules $M^l\to M\gets M^r$ or $M^{lr}\to M$, in cases (i) and (ii) respectively. Let $\mathcal D(\pzz)$ be the full abelian subcategory of diagrams such that $K_p\otimes_{R_p} M\to M$ is an isomorphism. Then $\wt {\mathcal C} = \mathcal D(\pz)\times_{\mathcal D(\pzz)} \Hol(C)$ is a fibered product of abelian categories, so it is an abelian category which contains $\cC$ as a full subcategory. Therefore, $\mathcal C$ is abelian if it contains kernels and cokernels for all its morphisms.

Consider a morphism $f=(f_\pz,f_\As):(M_\pz,M_\As,\cong_M)\to (N_\pz,N_\As,\cong_N)$ in $\cC$. We will often omit the reference to the isomorphism $j^*M_\pz\cong_M M_\As|_\pz$, and simply understand that these modules are identified. Similarly we will say that $j^*f_\pz=f_\As|_\pz$ for simplicity. Note that kernels in $\Mod(\pz)$ always exist, and  $\Mod(\As)$ is an abelian category. However, a priori it is not clear that $\coker f_\pz$ exists: it would require that $f_\pz^\star$ has constant rank (for the relevant choices of $\star=l,r,lr$). Now, by definition $M_\pz^\star = (j^*M_\pz)^\star$ and $f_\pz^\star = (j^*f_\pz)^\star$. Since $j^*f|_\pz=f_\As|_\pz$, applying Proposition~\ref{prop:wellDefined} to $f_\As$ we see that indeed $f_\pz^\star$ has constant rank as desired.

The kernel (resp. the cokernel) of the morphism $f$ is $(\ker f_\pz,\ker f_\As)$ (resp. $(\coker f_\pz,\coker f_\As)$). These are indeed objects of $\cC$, i.e. they agree on $\Mod(\pzz)$ by the isomorphism induced from $M$ (resp. $N$):
\[
j^*((\operatorname{co})\ker f_\pz) = (\operatorname{co})\ker j^*f_\pz \cong_{M } (\operatorname{co})\ker f_\As|_\pz =((\operatorname{co})\ker f_\As)|_\pz.
\]
\end{proof}

\begin{remark}\label{rem:recollInCurve}
Take $M\in \Mod(C)$. Then the kernel and cokernel of the adjunction map $M\to j_*j^*M$ are supported on $Gp$, since they vanish after applying $j^*$. Thus, we may fit any module $M$ in an exact sequence as follows:
\[
0\to i_!i^!M\to M\to j_*j^*M\to i_!R^1i_!M\to 0.
\]
Here $i^!$ is the left adjoint to pushforward $i_!$ from $Gp$, and $R^1i^!$ is the first derived functor of $i^!$. However, we don't require these facts so we will not prove them here, and we can take the above sequence as the definition of $i!i^!$ and $i_!R^1i^!$. We will let $\ov M$ be the image of $M\to j_*j^*M$. It can be characterized as the largest quotient of $M$ with no sections supported on $p$.
\end{remark}

\begin{remark}\label{rem:recollinC}
The category $\mathcal C$ has the same structure as $\Mod(C)$ from Remark \ref{rem:recollInCurve} above. The role of $j_*j^*$ is played by the functor $(j_*j^*,\Id):\Loc\times_{\Locc} \Hol(\As)\to \Loc\times_{\Locc} \Hol(\As)$, and modules ``supported at $p$'' are pairs $(M_\pz,0)\in \mathcal C$. The long exact sequence for $M = (M_\pz,M_\As)$ takes the form
\[
0\to (i_!i^!M_\pz,0) \to (M_\pz,M_\As) \to (j_*j^*M_\pz,M_\As) \to (i_!R^1i_!M_\pz,0)\to 0
\]
Where again $i_!i^!$ and $i_!R^1i^!$ can be characterized as the kernel and cokernel of the map $\Id \to j_*j^*$. In this case, $\ov M$ is again defined as the image of $M$ in $j_*j^*M$, and it is the largest quotient of $M$ such that $\ov M_\pz$ has no sections supported at $p$. We will use the notation $j^*$ and $i^!$ for $\mathcal C$ as well from now on.

\end{remark}

We will now construct an inverse to $\Phi:\Mod(C)\to \cC$. First, let us construct a functor $\iota_*:\Mod(\pz)\to \Mod(C)$, which will we will prove to be the right adjoint to $|_\pz$. Let $M\in \Mod(\pz)$ and an open set $V\subseteq C$. Also, from now on fix $\Xi\subset G$ a (finite) set of representatives of $G/\langle \tau\rangle$. We will distinguish our three cases: in case (i), we let
\[
\iota_*M(V) = \left\{(m_g)_{g\in G}:m_g\in \left\{\substack{M\text{ if }gp\in V\\
K_p\otimes M\text{ if }gp\notin V} \right. ;\substack{
\displaystyle m_{\gamma\tau^i}\in M_\pz^l\text{ for }i\ll 0,\forall \gamma\in \Xi \\
\displaystyle m_{\gamma\tau^i}\in M_\pz^r\text{ for }i\gg 0,\forall \gamma\in \Xi
},m_{gh^{-1}} = \ov hm_g\forall g\in G,h\in \St_p\right\}
\]
In case (ii), we let
\[
\iota_*M(V) = \left\{(m_g)_{g\in G}:m_g\in \left\{\substack{M\text{ if }gp\in V\\
K_p\otimes M\text{ if }gp\notin V} \right. ; m_{\gamma\tau^i}\in M_\pz^{lr}\text{ for }|i|\gg 0,\forall \gamma \in \Xi;m_{gh^{-1}} = \ov hm_g\forall g\in G,h\in \St_p
\right\}\] 
In case (iii):
\[
\iota_*M(V) = \left\{(m_g)_{g\in G}:m_g\in \left\{\substack{M\text{ if }gp\in V\\
K_p\otimes M\text{ if }gp\notin V} \right. ;m_{gh^{-1}} = \ov hm_g\forall g\in G,h\in \St_p
\right\}\]

We give $\iota_*M(V)$ the structure of an $\O(V)$-module by letting $f\in \O(V)$ act by $
f(m_g)_g = (f^g m_g)_g$. 
One checks that this definition indeed makes $\iota_*M$ into a quasicoherent sheaf, where the restriction maps are induced by the map $M|_\pz\to K_p\otimes M|_\pz$ (notice in particular that if $f$ is regular at $gp$, then $f^g$ is regular at $p$). Further, the condition $\ov hm_g = m_{gh^{-1}}$ is preserved by multiplication by $f\in \O$:
\[
m_{gh^{-1}} = \ov hm_{g} \Rightarrow f^{gh^{-1}} m_{gh^{-1}} = f^{g h^{-1}} \ov h m_g= \ov h (f^g m_g)
\]
We endow $\iota_*M$ with the following $G$-equivariant structure: for $g_0\in G$, we make $
\ov g_0(m_g)_g := (m_{g_0^{-1}g})_g = (m_g)_{g_0g}
$. As before, we can easily check that the condition $\ov hm_g = m_{gh^{-1}}$ is preserved. One checks that $\ov g_0 (\iota_*M(V)) = \iota_* M(g_0V)$, and further let us verify that the $G$-action is compatible with the $\O$-action: for $f\in \O$ and $g_0\in G$,
\[
f\ov g_0(m_g)_g = f(m_{g_0^{-1}g})_g = (f^{g}m_{g_0^{-1}g})_g =(f^{g_0g}m_g)_{g_0g} =\ov g_0(f^{g_0g}m_g)_g = \ov g_0f^{g_0}(m_g)_g.
\]
\begin{lemma}
The functor $\iota_*$, defined as above, is the right adjoint to $|_\pz$.
\end{lemma}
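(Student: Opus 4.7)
The plan is to exhibit mutually inverse, natural bijections between $\Hom_{\Hol(C)}(M,\iota_*N)$ and $\Hom_{\Mod(\pz)}(M|_\pz,N)$ for $M\in \Hol(C)$ and $N\in \Mod(\pz)$. I will do this by constructing the unit and counit explicitly, checking that they land where claimed, and then verifying the triangle identities; naturality in both variables will be evident from the formulas.

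The counit $\epsilon_N\colon (\iota_*N)|_\pz \to N$ is defined by projection onto the component indexed by the identity: a section $(n_g)_g$ of $\iota_*N$ near $p$ has $n_e\in N$ (since $e\cdot p=p$), and passing to the formal fiber at $p$ extends this to a map of $R_p$-modules. I will check that $\epsilon_N$ is $\St_p$-equivariant (this is exactly the relation $n_{h^{-1}} = \ov h n_e$ for $h\in\St_p$ appearing in the definition of $\iota_*N$), and that it maps $(\iota_*N)|_\pz^\star$ into $N^\star$, for which the conditions on $n_{\gamma\tau^i}$ for $|i|\gg 0$ are designed. In fact $\epsilon_N$ will turn out to be an isomorphism, reflecting that $\iota_*$ is fully faithful.

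The unit $\eta_M\colon M\to \iota_*(M|_\pz)$ is defined sectionwise: for $V\subseteq C$ open and $m\in M(V)$, set
\[
\eta_M(m) \;=\; \bigl(\, \text{image of } \ov{g^{-1}}m \text{ in } M|_\pz\text{ or in } K_p\otimes M|_\pz\,\bigr)_{g\in G},
\]
where we use that $\ov{g^{-1}}m\in M(g^{-1}V)$ becomes a germ at $p$ when $gp\in V$ and otherwise its class in $K_p\otimes M|_\pz$ is obtained after inverting local parameters. I need to check four things about this assignment. First, $\O$- and $G$-compatibility: these follow directly from the formulas $f\ov g = \ov g f^g$ and $\ov{g_1g_2}=\ov{g_1}\ov{g_2}$. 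Second, the $\St_p$-equivariance condition $m_{gh^{-1}}=\ov h m_g$ for $h\in\St_p$ is immediate since $\ov{h g^{-1}} = \ov h \circ \ov{g^{-1}}$. Third, the asymptotic conditions on $m_{\gamma\tau^i}$ for $|i|\gg 0$: by Proposition~\ref{prop:wellDefined}(1), for any coherent $L\subseteq M$ with $M/L$ supported on closed points, the germ $(\ov\tau^{\mp n}\ov{\gamma^{-1}} m)_p$ lies in $M|_\pz^\star$ for $n\gg 0$, which is exactly what is required (the sign flip in case (i) is controlled by whether $\gamma$ commutes with $\tau$ or conjugates it to $\tau^{-1}$, and Proposition~\ref{prop:wellDefined}(6) handles this case distinction).

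The harder step is verifying that the assignments $\phi\mapsto \epsilon_N\circ \phi|_\pz$ and $\psi\mapsto \iota_*\psi\circ \eta_M$ are mutually inverse, i.e.\ that $\epsilon$ and $\eta$ satisfy the triangle identities. One direction is straightforward: starting from $\psi\colon M|_\pz\to N$, the composite picks out the identity component of $\iota_*\psi(\eta_M(m))$, which is $\psi$ applied to the image of $m$ in $M|_\pz$, as desired. The other direction, $\phi = \iota_*(\epsilon_N\circ \phi|_\pz)\circ \eta_M$, is the main obstacle: it requires that the components of $\phi(m)$ in $\iota_*N$, when $gp\notin V$, are determined by $\phi|_\pz$ together with the $G$-equivariance of $\phi$. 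The key observation is that $\phi$ is a morphism of $G$-equivariant sheaves, so the $g$-component of $\phi(m)$ must equal $\ov{g^{-1}}$ applied to the identity component of $\phi(\ov g m)$ (interpreted in the appropriate localization when $gp\notin V$); this forces the formula on the nose. Combined with the quasicoherence of $\iota_*N$ and the description of its stalks/localizations built into the definition, this gives the recovery of $\phi$ from $\phi|_\pz$, and completes the proof.
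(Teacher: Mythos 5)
Your two bijections are exactly the paper's: $\phi\mapsto\phi_e$ (your $\epsilon_N\circ\phi|_\pz$) and $\psi\mapsto\bigl(\psi((\ov{g^{-1}}m)_p)\bigr)_g$ (your $\iota_*\psi\circ\eta_M$), together with the same verifications ($\O$- and $G$-compatibility from $f\ov g=\ov g f^g$ and $\ov{g_1g_2}=\ov g_1\ov g_2$, $\St_p$-invariance, and the asymptotic bounds from a choice of coherent $L\subseteq M$). So the content matches. However, the unit--counit packaging introduces a real gap in cases (i) and (ii). You want $\epsilon_N\colon(\iota_*N)|_\pz\to N$ to be a morphism in $\Loc$ and propose to check that it carries $(\iota_*N)|_\pz^\star$ into $N^\star$; but $(\iota_*N)|_\pz^\star$ is only defined when $\iota_*N\in\Hol(C)$, i.e.\ when $\iota_*N$ is generically finitely generated, and this can fail. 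For instance, with $C=\A^1$, $G=\Z$ acting by translation, $p=0$ and $N=R_p$ (so $N^l=N^r=N$), all the asymptotic and $\St_p$-conditions are vacuous and $\iota_*N(\A^1)\cong\prod_{n\in\Z}R_p$; the tuples $v_k$ with $t$ in the $k$-th slot and $1$ elsewhere are $k(x)$-linearly independent in the generic stalk, which is therefore not finitely generated. The paper anticipates exactly this and, right after the lemma statement, explains that ``right adjoint'' is to be read only as a natural bijection $\Hom_{\Loc}(M|_\pz,N)\cong\Hom_{\Mod(C)}(M,\iota_*N)$ for $M\in\Hol(C)$ and $N\in\Loc$; no counit is ever formed. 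The fix is to drop the counit and instead define the forward map directly: for $\phi\colon M\to\iota_*N$ set $\phi_e\colon M_p\to N$, extend $R_p$-linearly to $M|_\pz$, and verify $\St_p$-equivariance and $\phi_e(M|_\pz^\star)\subseteq N^\star$; the last step uses $M\in\Hol(C)$ to produce a coherent $L$ whose stalk at $p$ is finitely generated.

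One smaller point in your last paragraph: from $G$-equivariance of $\phi$ and the equivariant structure $\ov g_0(m_g)_g=(m_{g_0^{-1}g})_g$ on $\iota_*N$ one gets $\phi(m)_g=\phi_e(\ov{g^{-1}}m)$; your formulation ``$\ov{g^{-1}}$ applied to the identity component of $\phi(\ov g m)$'' has a spurious outer $\ov{g^{-1}}$, which in any case does not act on $N$ unless $g\in\St_p$.
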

Note that $|_\pz$ is only partially defined, since its domain is $\Hol(C)$ rather than $\Mod(C)$. However, the notion of an adjoint makes sense since $\Hol(C)$ is a full subcategory: we mean that for $M\in \Hol(C)$ and $N\in \Loc$, there is a natural isomorphism 
\[\Hom_{\Loc}(M|_\pz,N)\cong \Hom_{\Mod(C)}(M,\iota_*N).
\]
\begin{proof}
Let $M\in \Hol(C)$ and $N\in \Mod(\pz)$, and let $\phi:M\to \iota_*N$ be a map in $\Mod(C)$. A local section $m\in M$ is mapped to a sequence $(\phi_g(m))_g$. Consider $\phi_e$, which maps the stalk of $M$ at $p$ to $N$, and is $\O_C$-linear. We must check that $\phi_e$ is $\St_p$-equivariant provided that $\phi$ is $G$-equivariant. Indeed, if $\phi(m) = (\phi_g(m))$ and $h\in \St_p$,
\[
(\phi_g(\ov h m))_g = \phi(\ov h m) = \ov h \phi(m) =\ov h(\phi_g (m) )_g = (\phi_{h^{-1} g} (m))_g \Rightarrow \phi_e(\ov h m) = \phi_{h^{-1}}(m) = \ov h \phi_e(m)
\]

Finally, we must check that $\phi_e$ maps $M_p^{\star}$ into $N^{\star}$. Let us show this in the case where $\star=l$, and the other situations will be analogous. There exists some coherent sheaf $L\subseteq M$ such that $(\ov\tau^n L)_p = M_p^l$ for every $n\ge 0$. Then $\phi_e(L_p)=\phi_e ((\ov\tau^n L)_p) = \phi_{\tau^{-n}}(L_p)$, which for $n\gg 0$ is contained in $N^l$. This is the case because the stalk at $p$ of $L$ is finitely generated, so we only need to use that $\phi_{\tau^{-n}}(m)\in N^l$ for $n\gg 0$ and $m$ in a finite generating set of $L$.

In the other direction, let $\psi:M|_\pz\to N$ be a map in $\Mod(\pz)$, i.e. a $\St_p$-equivariant map such that $\psi M_\pz^{\star}\subseteq N^{\star}$. We define the map $\phi:M\to \iota_*N$ as follows: on a local section $m$,
\[
\phi(m) =  (\psi((\ov g^{-1}m)_p) )_g.
\]
If $m$ is regular at $gp$, then $\ov g^{-1}m$ is regular at $p$, i.e. $\psi((\ov g^{-1}m)_p)$ is contained in $N$ rather than $K_p\otimes N$, so the map is well-defined as a map of sheaves. Further, we check that it is $\O$-linear and $G$-equivariant: if $f\in \O$ and $g_0\in G$,
\[
f\phi(m) = (f^g\psi(\ov g^{-1}m)_p)_g = (\psi (\ov g^{-1} fm)_p)_g = \phi(fm);
\quad 
\ov g_0\phi(m) = (\psi(\ov g^{-1}\ov g_0m)_p)_g =\phi(\ov g_0m).
\]
We must check that the image of $\phi$ is contained in $\iota_*N$: the condition $\ov hm_g=m_{gh^{-1}}$ amounts to $
\ov h\psi((\ov g^{-1}m)_p)  = \psi((\ov h \ov g^{-1}m)_p) 
$, which follows from the $\St_p$-equivariance of $\psi$. Lastly, we must see that for $n\ll 0$ and $\gamma^{-1}\in \Xi$, $\psi((\ov \tau^{-n}\ov \gamma m)_p)\in N^l$, and similarly for $N^r$. This is indeed the case, since $m$ is contained in some coherent sheaf $L$, such that $M/L$ is supported on closed points. For $n\ll 0$ and $\gamma^{-1}\in \Xi$, $( \ov\tau^{-n}\ov \gamma L)_p = M|_\pz^l$ (recall that $\Xi$ is a finite set), and therefore $(\ov\tau^{-n} \ov \gamma m)_p\in M|_\pz^l$, which $\psi$ maps into $N^l$ by assumption.

It is straightforward to check that these two maps are inverse natural bijections between $\Hom(M,\iota_*N)$ and $\Hom(M|_\pz,N)$.

\end{proof}


We now define $\Psi$, which we will prove is the inverse of $\Phi$. Let $M= (M_\pz,M_\As)\in \mathcal C$. The adjunction $j^*\vdash j_*$ yields a natural map $f_1:M_\pz\to j_*j^*M_\pz\cong M_\As|_\pz$ (recall that this isomorphism is part of the data of $M$). The adjunction $|_\pz\vdash \iota_*$ yields a map $f_2:M_\As \to \iota_*M_\As|_\pz$. We define $\Psi M$ as the equalizer of $\iota_*f_1$ and $f_2$, i.e.
\[
\Psi (M_\pz,M_\As) = \ker (\iota_*M_\pz\oplus M_\As \longrightarrow \iota_* (M_\As|_\pz)) =\ker (\iota_*M_\pz\oplus M_\As \longrightarrow \iota_* (j^*M_\pz))
\]

\begin{lemma}\label{lem:InversesAreAdjoints}
$\Psi$ is right adjoint to $\Phi$.
\end{lemma}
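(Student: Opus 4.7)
The plan is to exhibit a natural bijection $\Hom_\cC(\Phi N, M) \cong \Hom_{\Hol(C)}(N, \Psi M)$ by unwinding the two established adjunctions $|_\pz \dashv \iota_*$ and $j^* \dashv j_*$, and then matching the compatibility condition defining morphisms in $\cC$ with the equalizer condition defining $\Psi M$. A morphism $\Phi N \to M$ is, by definition, a pair $(\phi_\pz, \phi_\As)$ with $\phi_\pz: N|_\pz \to M_\pz$ in $\Loc$ and $\phi_\As: j^*N \to M_\As$ in $\Hol(\As)$, satisfying $j^*\phi_\pz = \phi_\As|_\pz$ under the structural isomorphism of $M$ and the commutative square (\ref{eq:fiberDiagram}). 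Applying the two adjunctions, such a pair corresponds bijectively to a pair $(\alpha,\beta)$ with $\alpha: N \to \iota_*M_\pz$ and $\beta: N \to j_*M_\As$, i.e.\ to a single morphism $(\alpha,\beta): N \to \iota_*M_\pz \oplus j_*M_\As$, the ambient object in the definition of $\Psi M$.

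The heart of the proof is then to show that the compatibility condition on $(\phi_\pz, \phi_\As)$ is equivalent to the equalizer condition $\iota_*f_1 \circ \alpha = f_2 \circ \beta$ on $(\alpha,\beta)$, so that by the universal property of the kernel $(\alpha,\beta)$ factors uniquely through $\Psi M$. I would argue this through naturality of the adjunction bijections and the triangle identities. By naturality of $|_\pz \dashv \iota_*$, the $\iota_*$-adjoint of $\iota_*f_1 \circ \alpha$ is $f_1 \circ \phi_\pz: N|_\pz \to j_*j^*M_\pz$, which in turn is the $j^*$-adjoint of $j^*\phi_\pz: j^*(N|_\pz) \to j^*M_\pz$. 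On the other side, since $f_2$ is (up to the structural isomorphism in $M$) the unit of $|_\pz \dashv \iota_*$ evaluated at $j_*M_\As$, the triangle identity makes the $\iota_*$-adjoint of $f_2 \circ \beta$ equal to $\beta|_\pz$; by Diagram (\ref{eq:fiberDiagram}), which gives $|_\pz \circ j_* = j_* \circ |_\pz$, and naturality of the $j^* \dashv j_*$ unit, $\beta|_\pz$ corresponds in turn to $\phi_\As|_\pz$. Hence the two $\iota_*$-adjoints agree if and only if $j^*\phi_\pz = \phi_\As|_\pz$, as required.

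The inverse bijection is obtained by reversing the construction: a morphism $N \to \Psi M$ composed with the two projections yields $(\alpha,\beta)$, and the two adjunctions convert this back into a compatible pair $(\phi_\pz, \phi_\As)$. Naturality in $N$ and $M$ is inherited from the naturality of the two underlying adjunctions. Along the way one verifies that $\Psi M \in \Hol(C)$, which is immediate since the projection $\Psi M \to j_*M_\As$ identifies a subobject whose image under $j^*$ recovers $M_\As$, and $M_\As$ is generically finitely generated by assumption. The main obstacle is the middle step: tracking the units and counits of two distinct adjunctions through the identifications provided by the structural isomorphism in $M$ and by Diagram (\ref{eq:fiberDiagram}). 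In particular, one must carefully apply the triangle identity for $|_\pz \dashv \iota_*$ to handle $f_2$, and use the commutation of $|_\pz$ with $j_*$ to translate between $\beta|_\pz$ and $\phi_\As|_\pz$.
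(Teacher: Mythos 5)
Your proposal is correct and takes essentially the same approach as the paper, which passes more quickly by applying left-exactness of $\Hom(M,-)$ to the equalizer defining $\Psi$ and then invoking the adjunctions $|_\pz\dashv\iota_*$ and $j^*\dashv j_*$ termwise, whereas you unwind the same argument at the level of individual morphisms, units, and counits. One small correction: the commutation $|_\pz\circ j_*=j_*\circ|_\pz$ you invoke does not come from Diagram~(\ref{eq:fiberDiagram}) (which commutes the $j^*$'s) but from the observation that $|_\pz$ sends $\Hol(\As)$ into the full subcategory $\Mod(\pzz)$, so that both $j_*$'s are subcategory inclusions compatible with $|_\pz$.
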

\begin{proof}
This follows formally from previous discussion. Let $N = (N_\pz,N_\As)\in \mathcal C$, and let $M\in \Hol(C)$. Then,
\begin{align*}
\Hom(M,\Psi(N)) &\cong \ker (\Hom(M,\iota_*N_\pz)\oplus \Hom(M,N_\As)\to \Hom(M,\iota_* (N_\As|_\pz) ) \cong \\
&\cong \ker \left(\Hom(M|_\pz,N_\pz)\oplus \Hom(j^*M,N_\As) \to \Hom(M|_\pz,N_\As|_\pz)\right) \cong &   |_\pz\vdash \iota_*\\
&\cong \Hom_{\mathcal C}( (M|_\pz,j^*M),(N_\pz,N_\As)) =&\text{Def. of }\cC\\ &= \Hom_{\mathcal C}(\Phi(M),N).
\end{align*}
\end{proof}

\begin{lemma}\label{lem:exactness}
$\Phi$ is exact and $\Psi$ is left exact. Further, the following short exact sequence remains exact on the right after applying $\Psi$:
\[
0\to i_!i^!M\to M\to \ov M\to 0.
\]

\end{lemma}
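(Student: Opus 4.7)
The plan is to handle the three claims in sequence. For the exactness of $\Phi = (|_{\pz}, j^*)$, I would note that kernels and cokernels in $\cC$ are computed componentwise (as falls out of the proof that $\cC$ is abelian), so it suffices that both components are exact. The functor $j^*: \Hol(C) \to \Hol(\As)$ is exact, being the restriction of quasicoherent sheaves to the open subscheme $C \setminus Gp$ with pulled-back equivariant structure, and $|_{\pz}$ is exact by Proposition~\ref{prop:wellDefined}(4). The left exactness of $\Psi$ is then immediate: by Lemma~\ref{lem:InversesAreAdjoints}, $\Psi$ is a right adjoint, and right adjoints preserve all limits, in particular kernels.

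For the last claim, by left exactness it suffices to check that $\Psi M \to \Psi \ov M$ is surjective. I would start by observing that $j^* \ov M_{\pz} = j^* M_{\pz}$, because the kernel $i_! i^! M_{\pz}$ of the quotient $\pi : M_{\pz} \to \ov M_{\pz}$ is supported at $p$ and hence killed by $j^*$. Consequently $\Psi M = \ker(\iota_* M_{\pz} \oplus M_{\As} \to \iota_* j^* M_{\pz})$ and $\Psi \ov M = \ker(\iota_* \ov M_{\pz} \oplus M_{\As} \to \iota_* j^* M_{\pz})$ sit over the same target with identical second components. Since the map $f_1 : M_{\pz} \to j^* M_{\pz}$ factors through $\ov M_{\pz}$ by the definition of $\ov M$ (Remark~\ref{rem:recollinC}), the equalizer compatibility holds automatically for any lift $m$ of $\ov m$. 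The problem thus reduces to showing that $\iota_* M_{\pz} \to \iota_* \ov M_{\pz}$ is surjective on sections.

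To construct a lift of a section $(\ov m_g) \in \iota_* \ov M_{\pz}(V)$, I would exploit two isomorphisms that arise because $\ker \pi = i_! i^! M_{\pz}$ is torsion supported at $p$: the localization $K_p \otimes M_{\pz} \cong K_p \otimes \ov M_{\pz}$, and the restriction $M_{\pz}^\star \cong \ov M_{\pz}^\star$. The latter follows because $\ov M_{\pz}^\star = \pi(M_{\pz}^\star)$ (by applying the $|_{\pz}$ construction to $\pi(L)$) and the map is injective, $M_{\pz}^\star$ being free while $\ker \pi$ is torsion. For indices $g$ with $gp \notin V$, I would take the canonical lift given by the first isomorphism; for $g = \gamma \tau^i$ with $|i|$ beyond a chosen threshold, the canonical lift from the second; for the remaining finitely many indices, I would pick a representative of each coset $g \St_p$, choose any $m_g$ lifting $\ov m_g$ using surjectivity of $\pi$, and extend by $m_{g h^{-1}} := \ov h m_g$.

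The hard part will be verifying that the canonical portion of this assembly is automatically $\St_p$-equivariant, so that the two kinds of pieces glue into a single section of $\iota_* M_{\pz}(V)$ satisfying all the required conditions. This reduces to checking: (a) both ``$gp \in V$'' and (for a sufficiently large threshold) ``$g = \gamma \tau^i$ with $|i|$ large'' are constant on cosets $g \St_p$ (the former because $(gh^{-1}) p = gp$; the latter because conjugation by any $h \in \St_p$ shifts $i$ by a bounded amount and possibly flips its sign, both controlled for a large enough threshold); and (b) the canonical lifts commute with $\ov h$, which uses Proposition~\ref{prop:wellDefined}(6) to ensure that the isomorphism $M_{\pz}^\star \cong \ov M_{\pz}^\star$ respects the variant of $\star$ (i.e., $l$, $r$, or $lr$) dictated by how $\ov h$ acts.
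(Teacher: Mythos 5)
Your proposal is correct and follows essentially the same route as the paper's: reduce to surjectivity of $\iota_*M_\pz\to\iota_*\ov M_\pz$, use the isomorphism $M_\pz^\star\cong\ov M_\pz^\star$ to get canonical $\St_p$-equivariant lifts for indices far from the identity, and fill in arbitrary lifts over representatives of the finitely many remaining $\St_p$-cosets. The only cosmetic differences are that the paper argues $\Phi$-exactness via ``left adjoint plus preservation of monomorphisms'' rather than componentwise as you do, and it packages the far-out indices as $\Theta^\star=\{g:\ov m_g\in\ov M_\pz^\star\}$ rather than separately treating the $gp\notin V$ locus (where you correctly note the localization isomorphism gives the unique lift).
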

\begin{proof}
Since $\Phi$ is a left adjoint, it is right exact, so we only need to show that it preserves injections. This follows from the fact that both $|_\pz$ and $j^*$ are exact. Likewise, $\Psi$ is left exact due to being a right adjoint.

Let $M= (M_\pz,M_\As)\in \mathcal C$. We have the short exact sequence $i_!i^!M\to M\to \ov M$ from Remark~\ref{rem:recollinC}. Let us check that after applying $\Psi$ it remains exact on the right. A local section $m\in \Psi\ov M$ is a pair consisting of $(\ov m_g)_g\in \iota_* \ov M_\pz$ and $m_\As\in M_\As$, agreeing on $\iota_*j^*M_\pz$. A preimage of $m$ must be a pair $((\wt m_g)_g,m_\As)\in \Psi M\subseteq \iota_*M_\pz\oplus M_\As$, where $\wt m_g$ map to $\ov m_g$.

Let us construct such a preimage in case (i). Note that the induced map $M_\pz^{\star}\to \ov M_\pz^{\star}$ is an isomorphism, since it is the quotient of a finite rank free module by its torsion. Therefore, for $n\ll 0$ and $\gamma\in \Xi$, $\ov m_{\gamma\tau^n},\in \ov M^l_\pz$ has a unique preimage in $M^l_\pz$, and similarly for $M^r_\pz$ (taking $n\gg 0$). Let $\Theta^l\subset G$ be the subset of $g$'s such that $\ov m_g\in \ov M^l_\pz$, and similarly for $\Theta^r$. Since $\St_p$ preserves $M^\star_\pz$, it follows that $\Theta^\star\St_p = \Theta^\star$, and $G\setminus (\Theta^l\cup \Theta^r)$ is finite.

We choose $\wt m_g$ for all $g\in \Theta^l$ as the only preimage of $\ov m_g$ contained in $M_\pz^l$, and we make the analogous choice for $g\in \Theta^r\setminus \Theta^l$. By the uniqueness of the choice and the fact that $\St_p$ preserves $M_\pz^\star$ and $\Theta^\star$, it must follow that for $g\in \Theta^l\cup \Theta^r$ and any $h\in \St_p$, $\wt m_{gh^{-1}} = \ov h\wt m_g$. For $g\notin \Theta^r\cup\Theta^l$ (a finite set), we choose a set of representatives of $(G\setminus \Theta^r\cup \Theta^l)/\St_p$, and for these representatives $g$ we let $\wt m_g$ be an arbitrary preimage of $\ov m_g$ in $M_\pz$. The remaining $g$'s are chosen in the unique way that ensures the condition that $\wt m_{gh^{-1}} = h\wt m_g$.

This provides an element $(\wt m_g)\in \iota_*M_\pz$ mapping to $(\ov m_g)\in \iota_*\ov M_\pz$. We must check that the element $((\wt m_g),m_\As)$ is in $\Psi M$, i.e. that this pair agrees on $\iota_*j^*M|_\pz$. This is the case because the map $M_\pz\to j^*M_\pz$ factors through $M_\pz\to \ov M_\pz$, and it is given that $(\ov m_g)$ and $m_\As$ agree. We have thus produced a preimage of $m$ as we desired.

In case (ii), we proceed as in case (i), replacing $M^{lr}_\pz$ by $M^l_\pz$, and noting that defining $\Theta^{lr}$ analogously ensures that $G\setminus \Theta^{lr}$ is finite.

For case (iii), we choose a (necessarily finite) set of representatives of $G/\St_p$, and for these we arbitrarily choose a preimage $\wt m_g$ of $\ov m_g$. For the remaining $g$'s, we ensure that $\wt m_{gh^{-1}} = h\wt m_g$, which implies that $\wt m_{gh^{-1}}$ maps to $\ov m_{gh^{-1}}$. Then as before it will follow that $((\wt m_g),m_\As)\in \Psi M$, because the map to $j^*M_\pz$ factors through $\ov M_\pz$.


\end{proof}

We can finally show that $\Phi$ and $\Psi$ are mutual inverses.

\begin{proof}[Proof of Theorem \ref{thm:fiberFinSt}]

The adjunction yields natural transformations $\eta:\Phi \Psi\to \Id$ and $\epsilon:\Id\to \Psi\Phi$. Let us start by proving that $\epsilon$ is an isomorphism: let $M\in \Hol(C)$. The identity of $\Phi M = (M|_\pz,M|_\As)$ induces by the adjunction the map $M\to \Psi\Phi M$, which chasing the proofs above is given by
\[
\epsilon: M(U)\ni m\longmapsto \left(
((\ov g^{-1}m)_p)_g,m|_\As
\right)\in \Psi\Phi M \subset\iota_*(M|_\pz)\oplus M|_\As\subset \prod_{g} M|_\pz \oplus M|_\As.
\]
We will use the following exact sequences:
\[
0\to i_!i^!M\to M\to \ov M\to 0;\quad 0\to \ov M\to j_*j^*M\to i_!R^1i^!M\to 0.
\]
Applying $\Psi\Phi$, which is left-exact, we obtain the following diagrams with exact rows:
\[
\begin{tikzcd}[column sep = 2 em, ampersand replacement = \&]
0\arrow[r]\arrow[d]\& \ov M\arrow[r]\arrow[d,"\epsilon_{\ov M}"]\& j_*j^*M \arrow[r]\arrow[d,"(1)"]\& i_!R^1i_!M\arrow[d,"(2)"]\&
 0\arrow[r]\arrow[d]\& i_!i^!M\arrow[r]\arrow[d,"(3)"]\& M \arrow[r]\arrow[d,"\epsilon_M"]\& \ov M\arrow[r]\arrow[d,"\epsilon_{\ov M}"]\& 0 \&\\
0\arrow[r]\& \Psi\Phi \ov M\arrow[r]\& \Psi \Phi j_*j^*M\arrow[r]\& \Psi \Phi i_!R^1i_!M \&
 0\arrow[r]\& \Psi\Phi i_!i^! M\arrow[r]\& \Psi \Phi M\arrow[r]\& \Psi \Phi\ov M.\&  \\
\end{tikzcd}
\]
If arrows $(1)$ and $(2)$ are isomorphisms it will follow that $\epsilon_{\ov M}$ is an isomorphism as well. Further, if arrow $(3)$ is an isomorphism, the five-lemma implies that $\epsilon_M$ is an isomorphism as well. Putting everything together, to show that $\epsilon$ is an isomorphism it suffices to prove that $\epsilon$ is an isomorphism when restricted to the images of $i_!$ and $j_*$, i.e. to sheaves supported on $Gp$ and sheaves in $\Mod(\As)$.


Suppose $M\cong \iota_!\iota^! M$. Then, $M|_\As=0$, so $M|_\pz$ is supported on $p$, and we want to prove that $M\cong \iota_*(M|_\pz)$. It's a matter of writing out the definitions and using the fact that in cases (i) and (ii), $\iota_*M|_\pz$ is contained in the direct sum $\bigoplus_{g} M|_\pz \subset \prod_g M|_\pz$, as $M|_\pz^\star=0$.

If $M\cong j_*j^*M$, then $\epsilon$ is injective, since $m|_\As=m$. Now, consider an element $ n =((m_g)_g,m_\As)\in \Psi\Phi M$. We have that $n=\epsilon(m_\As)$, so $\epsilon$ is surjective.

It remains to prove that $\eta:\Phi\Psi\to \Id$ is an isomorphism. Starting with $M= (M_\pz,M_\As)\in \mathcal C$, $\eta$ is given by $\eta_\pz$ and $\eta_\As$ as follows:
\[
\eta_\pz:(\Psi M)|_\pz\ni ( (m_g)_g,m_\As)_p \mapsto m_e \in M_\pz	
\]
\[
\eta_\As:j^*(\Psi M)\ni j^*( (m_g)_g,m_\As) \mapsto m_\As \in M_\As
\]
We must check that they are both isomorphisms (as Lemma \ref{lem:InversesAreAdjoints} guarantees that they are well-defined and that they agree on $j^*(\Psi M)|_\pz$). We try the same strategy, with the analogous exact sequences as before (from Remark \ref{rem:recollinC}). Applying $\Phi \Psi$ we obtain the following diagrams.

\[
\begin{tikzcd}[column sep = 2 em, ampersand replacement = \&]
0\arrow[r]\arrow[from = d]\& \ov M\arrow[r]\arrow[from = d,"\eta_{\ov M}"]\& j_*j^*M \arrow[r]\arrow[from = d,"(1)"]\& i_!R^1i_!M\arrow[from = d,"(2)"]  \&
0\arrow[r]\& i_!i^!M\arrow[r]\arrow[from = d,"(3)"]
\& M \arrow[r]\arrow[from = d,"\eta_M"]
\& \ov M\arrow[r]\arrow[from = d,"\eta_{\ov M}"]
\& 0\\
0\arrow[r]\& \Phi\Psi \ov M\arrow[r]\& \Phi\Psi j_*j^*M\arrow[r]\& \Phi\Psi i_!R^1i_!M\&
0\arrow[r]\& \Phi\Psi i_!i^! M\arrow[r]\& \Phi\Psi M\arrow[r]\& \Phi\Psi\ov M\arrow[r]\& 0. \\
\end{tikzcd}
\]
The rows of these diagrams are exact. This time, for the second diagram we need Lemma \ref{lem:exactness}.

As before, if arrows $(1)$ and $(2)$ are isomorphisms it will follow that $\eta{\ov M}$ is an isomorphism as well, and if $(3)$ is an isomorphism as well, the five-lemma will imply that $\eta_M$ is an isomorphism as desired. So we only need to check that $\eta_M$ is an isomorphism for modules $M$ with $M_\As=0$ and for modules with $M_\pz\cong j_*j^*M_\pz$.


In the case of a module $M$ with $M_\As=0$, as before it suffices to write the map $\eta$: $\Psi M\cong \iota_*M_\pz$, so $\eta_\As=0$ and $\eta_\pz$ is the map $(\iota_*M_\pz)|_\pz\to M_\pz$, which can be directly verified to be an isomorphism. For a module of the form $j_*M$, we have that $\Psi j_* M \cong M_\As$, from the definition of $\Psi$ (since $(j_*M)_\pz\cong (j^*j_*M)_\pz$), and therefore indeed $\eta$ is an isomorphism.

\end{proof}

\section{Symmetric elliptic difference modules}\label{sec:bigSecElliptic}
\subsection{Elliptic modules as equivariant sheaves}\label{sec:elliptic-equiv}

Elliptic modules are very closely related to equivariant sheaves. Refer to Section~\ref{sec:ellipticEqsDef} for the notation and the definition. We will let $\pi_1,\pi_2:E\to \P^1$ be the projections and let $\sigma:E\to E$ be such that $\pi_1\circ \sigma=\pi_2$. Further, let $\sigma_i$ be the deck involution of the double cover $\pi_i:E\to \P^1$ (notice that $\sigma_2 = \sigma\sigma_1\sigma^{-1}$), which we show exists as part of Lemma~\ref{lem:z2-descent}. Elliptic modules come with a $\Z/2\Z$-equivariant structure (for the action of $\sigma$), and the fact that they are pulled back from $\P^1$ means they have another $\Z/2\Z$-equivariant structure, for the action of $\sigma_1$ (Lemma~\ref{lem:z2-descent}). Together, they form an equivariant structure for the infinite dihedral group $G=\langle \sigma,\sigma_1:\sigma^2=\sigma_1^2=1\rangle$.

Elliptic modules as we have defined them are not equivalent to $G$-equivariant sheaves on $E$, but they do embed into these. The reason for the difference lies in the fixed points of $\sigma_1$: sheaves equivariant under the $\Z/2$-action of $\sigma_1$ are sheaves on the stack quotient of $E$ by $\Z/2$, but $\P^1$ is just the coarse moduli space for this stack, and they differ exactly at the branch locus of $\pi_1$. The relation between these two is simple: sheaves that descend to $\P^1$ are the ones for which $\sigma_1$ acts as the identity on the (derived) fibers at ramified points. This is the content of Lemma~\ref{lem:z2-descent}. We now present the main three results of this section, followed by their proofs.

 
For the results of this section it is essential that the characteristic of $k$ is not 2, as well as for Theorem~\ref{thm:mainThm-elliptic}, since it depends on these statements.

\begin{lemma}\label{lem:z2-descent}
Let $C$ be a smooth connected curve, and let $\pi:C'\to C$ be a finite flat map of degree 2. In this situation, there is a deck involution $\sigma:C'\to C'$ such that $\pi\circ \sigma=\pi$. Let $i:Y\hookrightarrow C'$ be the fixed scheme of $\sigma$, i.e. $Y$ is cut out by the ideal sheaf $I_Y =\langle g-g^\sigma:g\in \O_{C'}\rangle$.

Then $\pi^*$ induces an equivalence between quasicoherent sheaves on $C$ and $\Z/2\Z$-equivariant sheaves $M$ on $C'$ such that $Li^*_Y\cA_\sigma=\Id$. Here $\cA_\sigma:M\to \sigma^*M$ denotes the equivariant structure, and by $Li^*_Y\cA_\sigma=\Id$ we mean that it agrees with the isomorphism $Li^*_Y\cong Li^*_Y\circ \sigma^*$ induced from $i_Y=\sigma\circ i_Y$.
\end{lemma}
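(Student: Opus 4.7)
I would prove this by constructing an explicit quasi-inverse to $\pi^*$ and verifying the unit and counit of the adjunction are isomorphisms. Since $\mathrm{char}\,k\neq 2$, the natural candidate is the invariants functor $F(M)=(\pi_*M)^{\Z/2\Z}$, which on the equivariant side corresponds to the $(+1)$-eigenspace $M^+$ of $\cA_\sigma$. The statement is local on $C$, so I would reduce to $C=\Spec A$ and $C'=\Spec B$ with $B$ a rank-$2$ locally free $A$-algebra. Using the $\sigma$-decomposition $B=A\oplus B^-$, one can (étale locally, or Zariski locally after trivializing the line bundle $B^-$) arrange $B=A[x]/(x^2-a)$ with $\sigma(x)=-x$ and $a\in A$, in which case $I_Y=(x)$ and $Y=\Spec A/(a)$.

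In this normal form, any equivariant $M$ splits as $M=M^+\oplus M^-$, and multiplication by $x$ restricts to $A$-linear maps $x_+:M^+\to M^-$ and $x_-:M^-\to M^+$ satisfying $x_+\circ x_-=x_-\circ x_+=a\cdot\Id$. The counit $\pi^*F(M)=B\otimes_A M^+\to M$ sends $1\otimes m\mapsto m$ and $x\otimes m\mapsto x_+(m)$, hence it is an isomorphism precisely when $x_+$ is. To extract this invertibility from the hypothesis, I would compute $Li^*_YM$ using the Koszul resolution $0\to B\xrightarrow{x}B\to B/(x)\to 0$, so that $Li^*_YM$ is represented by the two-term complex $[M\xrightarrow{x}M]$ in degrees $-1,0$. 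The involution $\cA_\sigma$ acts on this complex as $+\Id$ on $M^+$ and $-\Id$ on $M^-$, so the condition $Li^*_Y\cA_\sigma=\Id$ translates into the vanishing of the $(-1)$-eigenspace of $\cA_\sigma$ on both $H^0=M/xM$ and $H^{-1}=\ker(x)$. Combined with the relations $x_+x_-=x_-x_+=a$, these vanishings give that $x_+$ is bijective.

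For the other direction, the unit $N\to F(\pi^*N)$ is the identity once one observes that $(N\otimes_A B)^+=N\otimes_A B^+=N$. The condition $Li^*_Y\cA_\sigma=\Id$ on $\pi^*N$ is automatic, because the equivariant structure on $\pi^*N$ is induced by the canonical identification $\sigma\circ i_Y=i_Y$, which restricts on $Li^*_Y$ precisely to the identity natural transformation. Naturality is formal, and the constructions are local enough to glue globally.

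The step I expect to be the main obstacle is the precise linear-algebra implication inside the counit calculation: deducing bijectivity of $x_+:M^+\to M^-$ from the simultaneous vanishings of the $(-1)$-eigenspaces on $H^0$ and $H^{-1}$, combined with the relations $x_\mp\circ x_\pm=a\cdot\Id$. Surjectivity is immediate from the $H^0$ condition, but injectivity requires transferring information through the $a$-relation between the two maps $x_\pm$; once that elementary but delicate step is isolated and proved, the rest of the argument reduces to standard descent along the finite étale-away-from-$Y$ cover $\pi$.
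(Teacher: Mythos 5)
Your strategy matches the paper's: reduce to the local normal form $B=A[x]/(x^2-a)$, decompose an equivariant module into $\ov\sigma$-eigenspaces $M=M^+\oplus M^-$, and show the hypothesis forces $x_+\colon M^+\to M^-$ to be bijective, so that $M=M^+\oplus x_+M^+\cong\pi^*(M^+)$. You replace the paper's explicit $\sigma$-equivariant free resolution of $M$ by the Koszul resolution of $B/(x)$. That is a legitimate shortcut, but your reading of the resulting condition has a sign error, and it is fatal: the step you flag as the ``main obstacle'' is, as you stated it, a false implication.

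The Koszul complexes representing $Li^*_YM$ and $Li^*_Y\sigma^*M$ are $[M\xrightarrow{x}M]$ and $[M\xrightarrow{-x}M]$ (the latter because $x$ acts on $\sigma^*M$ through $x^\sigma=-x$). The canonical isomorphism $Li^*_Y\cong Li^*_Y\sigma^*$ from $\sigma\circ i_Y=i_Y$ restricts to the identity on $H^0=M/xM$; the chain map realizing it must therefore be $-\Id$ in degree $-1$ in order to commute with the differentials $x$ and $-x$. So the condition $Li^*_Y\cA_\sigma=\Id$ says $\ov\sigma=+\Id$ on $M/xM$ but $\ov\sigma=-\Id$ on $\ker(x)$: equivalently, $x_+$ is surjective \emph{and} $\ker(x)\cap M^+=\ker(x_+)=0$, so $x_+$ is a bijection outright, with no ``$a$-relation'' step required. (Sanity check on $M=\pi^*N=N\oplus xN$: here $\ker(x)=x\cdot N[a]\subset xN=M^-$, on which $\ov\sigma$ acts by $-1$.) Your stated condition --- that the $(-1)$-eigenspace vanishes on $\ker(x)$, i.e.\ $x_-$ is injective --- does \emph{not} imply $x_+$ injective via $x_-x_+=a\cdot\Id$, since $a$ may annihilate $\ker(x_+)$. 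Concretely, take a non-unit $a\in A$, $M^+=A/(a^2)$, $M^-=A/(a)$, $x_+$ the reduction map and $x_-$ multiplication by $a$: then $x_\pm x_\mp=a\cdot\Id$, $x_+$ is surjective, $x_-$ is injective, but $\ker(x_+)=(a)/(a^2)\neq 0$; this $M$ satisfies your stated conditions yet is not in the image of $\pi^*$.

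A second, independent gap: the two-term complex $[B\xrightarrow{x}B]$ is a resolution of $B/(x)$ only when $x$ is a non-zero-divisor in $B$, which fails when $a=0$, i.e.\ $C'\cong C[x]/(x^2)$. That case is allowed by the hypotheses (and is the one relevant to Proposition~\ref{prop:elliptic-Dmod}). There one needs the $2$-periodic resolution $\cdots\xrightarrow{x}B\xrightarrow{x}B$, and the condition $Li^*_Y\cA_\sigma=\Id$ then imposes constraints in every homological degree; by the alternating sign it forces $\ker(x)/xM=0$ as well. This case requires separate treatment in your argument.
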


\begin{proposition}\label{prop:EllipticModulesAreDihedral}
Let $E\subseteq \P^1\times \P^1$ be a degree $(2,2)$ symmetric curve with no horizontal or vertical components. Let $\sigma: E\to  E$ be the automorphism interchanging the factors, and let $\sigma_1$ be the deck transformation of $\pi_1: E\to \P^1$. Let $G$ be the infinite dihedral group generated by $\sigma$ and $\sigma_1$. Let $i:Y\hookrightarrow E$ be the subscheme fixed by $\sigma_1$, i.e. the scheme cut out by the ideal sheaf $I_Y = \langle f-f^{\sigma_1}:f\in \O_E\rangle$. Then there is an equivalence between the following categories:
\begin{itemize}
\item $E$-elliptic modules.
\item The full subcategory of $G$-equivariant sheaves on $E$ such that 
$Li^*_Y\cA_{\sigma_1} = \Id$, where $Li^*_Y$ denotes the derived restriction to $Y$.
\end{itemize}
The equivalence of categories maps an elliptic module $M$ to $\pi_1^*M$ with the equivariant structure such that $\cA_{\sigma}=\cA$ coming from the elliptic module structure, and $\cA_{\sigma_1}$ is provided by Lemma~\ref{lem:z2-descent}.
\end{proposition}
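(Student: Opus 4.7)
The plan is to reduce the proposition to Lemma~\ref{lem:z2-descent} applied to $\pi_1: E \to \P^1$. First I would verify that $\pi_1$ is a finite flat map of degree~$2$: finiteness and degree follow from $E$ being of bidegree $(2,2)$ with no vertical components, while flatness holds because $E$ is Cohen-Macaulay (being a Cartier divisor in the smooth surface $\P^1\times \P^1$) and maps finitely to the smooth curve $\P^1$. Lemma~\ref{lem:z2-descent} then identifies $\QCoh(\P^1)$ with the category of $\sigma_1$-equivariant sheaves on $E$ satisfying $Li^*_Y\cA_{\sigma_1} = \Id$, where $Y\hookrightarrow E$ is the subscheme fixed by $\sigma_1$.

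Next I would translate the remaining datum of an elliptic module into a $\sigma$-equivariant structure on $\pi_1^*M$. Since $\pi_2 = \pi_1\circ \sigma$, there is a canonical identification $\pi_2^*M \cong \sigma^*\pi_1^*M$, so an isomorphism $\cA: \pi_1^*M \to \pi_2^*M$ corresponds to an isomorphism $\cA_\sigma: \pi_1^*M \to \sigma^*\pi_1^*M$. Under this identification, the symmetry axiom $\sigma^*\cA = \cA^{-1}$ becomes $\sigma^*\cA_\sigma \circ \cA_\sigma = \Id$, which is exactly the cocycle condition for a $\langle \sigma\rangle \cong \Z/2\Z$-equivariant structure.

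To assemble both pieces into a $G$-equivariant structure, I would invoke the presentation $G = \langle \sigma, \sigma_1 \mid \sigma^2 = \sigma_1^2 = 1\rangle$ of the infinite dihedral group: as noted in Section~\ref{sec:introToequivariant}, a $G$-equivariant structure is specified by equivariant data on a generating set subject only to the given relations. Since both relations express that a single generator squares to the identity, the two $\Z/2\Z$-equivariant structures $\cA_\sigma$ and $\cA_{\sigma_1}$ combine into a $G$-equivariant structure with no further compatibility required. In the reverse direction, a $G$-equivariant sheaf on $E$ satisfying $Li^*_Y\cA_{\sigma_1}=\Id$ descends via Lemma~\ref{lem:z2-descent} to an $M \in \QCoh(\P^1)$ whose pullback is recovered, and $\cA_\sigma$ then yields the elliptic-module datum $\cA$.

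Finally, for morphisms I would use that Lemma~\ref{lem:z2-descent} already matches morphisms in $\QCoh(\P^1)$ with $\sigma_1$-equivariant maps on $E$, and verify that the intertwining condition $\cA_N \circ \pi_1^*\phi = \pi_2^*\phi \circ \cA_M$ from Definition~\ref{def:ellEq} is precisely the $\sigma$-equivariance of $\pi_1^*\phi$ under the identification $\pi_2^* \cong \sigma^*\pi_1^*$. I expect the only mildly delicate point to be bookkeeping this canonical identification consistently on both the automorphism $\cA$ and its cocycle condition; the rest is a routine unwinding of definitions.
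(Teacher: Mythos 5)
Your proof is correct and follows essentially the same route as the paper's: apply Lemma~\ref{lem:z2-descent} to $\pi_1$ to handle the $\sigma_1$-equivariant/descent direction, observe that the elliptic datum $\cA$ is exactly a $\sigma$-equivariant structure via $\pi_2^*\cong\sigma^*\pi_1^*$, and combine using the presentation $\sigma^2=\sigma_1^2=1$ of the infinite dihedral group. Your explicit check that $\pi_1$ is finite flat (so that the lemma applies, e.g.\ via miracle flatness) is a detail the paper leaves implicit, but otherwise the argument matches.
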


\begin{proposition}\label{prop:FlatEllipticModulesAreDihedral-normalization}
Let $E\subseteq \P^1\times \P^1$ be a \textbf{reduced} degree $(2,2)$ symmetric curve with no horizontal or vertical components. Let the field $k$ be perfect. Let $\pi:\wt E\to E$ be the normalization of $E$, let $\sigma:\wt E\to \wt E$ be the automorphism interchanging the factors, and let $\sigma_i$ be the deck transformation of $\pi_i\pi:\wt E\to \P^1$ (note that $\sigma_2=\sigma\sigma_1\sigma$). Let $G$ be the infinite dihedral group generated by $\sigma$ and $\sigma_1$. Finally, let $Z$ be the singular set of $E$ 
and let $i_Y:Y\hookrightarrow E$, resp. $i_{\wt Y}:\wt Y\hookrightarrow \wt E$  be the fixed scheme of $\sigma_1$.

The pullback $\pi^*$ induces an equivalence between the following categories:

\begin{itemize}
\item $E$-elliptic modules which are flat at $\pi_1(Z)\subset\P^1$.
\item The full subcategory of $G$-equivariant sheaves on $\wt E$ satisfying two conditions:
\begin{enumerate}
\item At the points of $\pi^{-1}(Z)$, the sheaves are flat.
\item $Li^*_{\wt Y}\cA_{\sigma_1} = \Id$.
\end{enumerate}
\end{itemize}
The equivalence of categories maps an elliptic module $M$ to $\pi_1^*M$ with the equivariant structure such that $\cA_{\sigma}=\cA$ coming from the elliptic module structure, and $\cA_{\sigma_1}$ is provided by Lemma~\ref{lem:z2-descent}.
\end{proposition}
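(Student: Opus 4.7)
The plan is to chain Proposition~\ref{prop:EllipticModulesAreDihedral} with a Ferrand-type descent equivalence for flat sheaves on $E$ and its normalization $\wt E$. The target functor $M\mapsto (\pi_1\pi)^*M = \pi^*\pi_1^*M$ factors as first passing from elliptic modules to $G$-equivariant sheaves on $E$ via $\pi_1^*$, then pulling back along $\pi$ to $\wt E$. Correspondingly, the proof is set up as two equivalences composed.

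First I would invoke Proposition~\ref{prop:EllipticModulesAreDihedral} to identify $E$-elliptic modules with the full subcategory of $G$-equivariant quasicoherent sheaves $N$ on $E$ satisfying $Li^*_Y\cA_{\sigma_1}=\Id$. Under this identification one has to match the flatness hypothesis: an elliptic module $M$ is flat at $\pi_1(Z)\subset\P^1$ if and only if $\pi_1^*M$ is flat at $Z$. This uses that $\pi_1:E\to\P^1$ is finite and flat, which follows from $E$ being a bidegree $(2,2)$ Cartier divisor in the smooth surface $\P^1\times\P^1$ (hence Cohen--Macaulay) together with miracle flatness for finite morphisms to a regular curve. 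The converse direction uses that $\bigoplus_{z\in\pi_1^{-1}(p)}\O_{E,z}$ is faithfully flat over $\O_{\P^1,p}$ to descend flatness from $E$ back to $\P^1$.

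Next I would use the results of \cite{Ferrand} to obtain, via $\pi^*$, an equivalence between quasicoherent sheaves on $E$ flat at $Z$ and quasicoherent sheaves on $\wt E$ flat at $\pi^{-1}(Z)$. The $G$-action on $E$ lifts uniquely to $\wt E$ by the universal property of normalization, so $\pi$ is $G$-equivariant and the equivalence restricts to $G$-equivariant categories. Combining these two steps yields the claimed bijection of objects, with pullback sending $M$ to $(\pi_1\pi)^*M$.

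The remaining step is to transfer the condition $Li^*_Y\cA_{\sigma_1}=\Id$ from $E$ to the analogous condition $Li^*_{\wt Y}\cA_{\sigma_1}=\Id$ on $\wt E$. Over $E\setminus Z$ the map $\pi$ is an isomorphism and $Y$ is identified with $\wt Y$, so the conditions coincide. At points of $Z$ the sheaves in question are flat by hypothesis, so derived pullbacks agree with ordinary pullbacks, and since $\pi$ is $\sigma_1$-equivariant with $\pi(\wt Y)\subseteq Y$, the restriction of $\cA_{\sigma_1}$ to $\wt Y$ is obtained by pulling back its restriction to $Y$ along $\pi|_{\wt Y}$. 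The main obstacle lies precisely here, since $\pi^{-1}(Y)$ and $\wt Y$ need not agree as subschemes at singular points of $E$ lying on the fixed locus of $\sigma_1$; the resolution is that the flatness hypothesis at $Z$ makes all relevant Tor terms vanish, reducing the comparison of derived restrictions to a direct check that can be done after localizing at a point of $Z$ and using the explicit form of the Ferrand gluing.
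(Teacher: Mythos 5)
There is a genuine gap at the Ferrand step. Th\'eor\`eme~2.2 of \cite{Ferrand}, as recalled in Theorem~\ref{thm:Ferrand} of the paper, does \emph{not} give an equivalence between flat sheaves on $E$ and flat sheaves on $\wt E$; it gives an equivalence between flat sheaves on $E$ and flat objects of the fiber product category $\QCoh(\wt E)\times_{\QCoh(\wt X)}\QCoh(X)$, i.e.\ triples consisting of a sheaf on the normalization, a sheaf on the conductor scheme $X$, and a gluing datum over $\wt X$. Dropping the $X$-component loses genuine information: for instance line bundles on a nodal curve with trivial pullback to the normalization form a nontrivial $\Gm$-torsor, so $\pi^*$ alone is not fully faithful even on flat (indeed locally free) sheaves. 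Your assertion that ``the $G$-action on $E$ lifts to $\wt E$, so the equivalence restricts to $G$-equivariant categories'' has no content until you explain how to produce the missing $X$-sheaf from an equivariant sheaf on $\wt E$ alone, and that is exactly the hard point of the proposition.

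What makes the argument work in the paper is precisely the interaction between the $\sigma_1$-descent condition (condition (2)) and the Ferrand data, and it is handled by a detour rather than by a direct comparison: one first proves a structural lemma (Lemma~\ref{lem:pinching}) identifying $\O_E$ as the fiber product $\pi_*\O_{\wt E}\times_{\pi_*\O_{\wt X}}\O_X$ with $\wt X$ the fixed scheme of $\sigma_1\sigma_2$, $X=\wt X/\langle\sigma_1\rangle$ sitting inside $\P^1$, and $\supp X=Z$. Then, given $\wt M$ on $\wt E$ with $Li^*_{\wt Y}\cA_{\sigma_1}=\Id$, one does not try to descend $\wt M|_{\wt X}$ to $X$ directly (this is what your sketch implicitly requires, but Lemma~\ref{lem:z2-descent} applies to a degree-$2$ flat cover of a smooth curve, not to $\wt X\to X$); instead one descends $\wt M$ all the way to $M\in\QCoh(\P^1)$ via Lemma~\ref{lem:z2-descent}, pulls back to $\pi_1^*M$ on $E$, and checks that the Ferrand triple $T(\pi_1^*M)=(\wt M,i_X^*\pi_1^*M)$ recovers $\pi_1^*M$, with the $\sigma$-action transported through $S$. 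Your closing paragraph acknowledges an ``obstacle'' at the conductor locus but only gestures at Tor-vanishing, which would address the derived-restriction comparison but not the missing $X$-data in the Ferrand equivalence itself. Incidentally, Remark~\ref{rem:flatnessIsNeeded-notFull} in the paper gives an explicit example where $\pi^*\pi_1^*$ fails to be full once the flatness hypothesis is dropped, which is a useful sanity check that the equivalence cannot be as naive as your outline suggests.
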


\begin{proof}[Proof of Lemma~\ref{lem:z2-descent}]

Note that in the unramified case this boils down to \'etale descent for quasicoherent sheaves, \cite[\href{http://stacks.math.columbia.edu/tag/023T}{Tag 023T}]{stacks-project}.

Let us start by explicitly showing the existence of $\sigma$. Since $\pi$ is finite flat of degree $2$, $\pi_*\O_{C'}$ is a locally free $\O_C$-module of rank $2$. We will omit $\pi_*$ from the notation and just denote $\O_{C'}=\pi_*\O_{C'}$, since we will only talk of sheaves on $C$. Let us start by showing that $\O_{C'}/\O_C$ is locally free (the flatness implies that $\O_C\subset \O_{C'}$). Since $\O_C$ is (locally) a Dedekind domain, it suffices to show that it is torsion-free. Suppose it had torsion: let $y\in \O_{C'},a,b\in \O_{C}$ be such that $ay=b$. The ideal $(a,b)\subset \O_C$ is locally free, so passing to a smaller open cover, we can assume that it is principal: thus we may assume that $a=ca'$, $b=cb'$ and $(a',b')=\O_C$. The flatness of $\O_{C'}$ implies that $c\in \O_C$ is not a zero divisor, so we have that $a'y=b'$. Since $\O_{C'}$ is finite over $\O_C$, $y$ is integral over $\O_C$, i.e. there is a monic polynomial annihilating it: $\sum_{i=0}^n a_iy^i=0$, where $a_n=1$. Multiplying by $a'^n$, we have $\sum_{i=0}^n a'^{n-i}a_ib'^i=0$, which implies that $a'$ divides $b'^n$. The conditions that $(a',b')=\O_C$ together with $a'|b'^n$ imply that $a'$ is a unit in $\O_C$. Therefore, $y=b'a'^{-1}\in \O_C$. This shows that $\O_{C'}/\O_C$ is locally free.

We have that both $\O_{C'}$ and $\O_{C'}/\O_C$ are locally free (of ranks $2$ and $1$, respectively). Consider an open cover over which they are both free, and for each open set choose a lifting $y'\in \O_{C'}$ of a generator of $\O_{C'}/\O_C$. Then (on a fixed open set), $\{1,y'\}$ is a basis of $\O_C$. Therefore, $y'^2=ay'+b$ for some $a,b\in \O_C$. We replace $y'$ by $y=y'-a/2$, so that $y^2\eqqcolon x\in \O_C$. Thus we have shown that $\O_{C'}$ is locally of the form $\O_C[y]/(y^2-x)$, and as an $\O_C$-module it is $\O_C\oplus y\O_C$. The action of $\sigma^*$ is $\O_C$-linear and generated by $y\mapsto -y$. This action is independent of the choice of $y$: one checks directly that any other $\wt y\in\O_{C'}$ whose square is in $\O_C$ is an element of $\O_C\cdot y$, and therefore the $\sigma^*$-action is unique. Since this canonical action is preserved by localization, it can be glued over the open cover to yield the desired deck transformation. Notice that $\O_C=\O_{C'}^\sigma \coloneqq \{\alpha\in \O_{C'}:\alpha^\sigma=\alpha\}$.

Now that we know the global existence of $\sigma$, we can see that the equivariant pullback $\pi^*$ is a local construction on $C$. Therefore, it is enough to prove the statement on an open cover. From now on, we will assume $C=\Spec R$ is affine, and $S:=\O_{C'} = R[y]/(y^2-x)$ for some $x\in R$.


For an $R$-module $M$, $\pi^*M = M\oplus yM$, and the natural isomorphism $\sigma^*\pi^*\cong (\pi\sigma)^* = \pi^*$ is the equivariant structure given by $\cA_{\sigma}(m_1+ym_2) = \sigma^*(m_1-ym_2)=\sigma^*m_1+y\sigma^*m_2$, for $m_1,m_2\in M$. Therefore, on $\pi^*M/y\pi^*M$ we see that $\cA_\sigma$ induces the map $m\mapsto \sigma^* m$, while on $y^{-1}(0)\subseteq \pi^*M$, it induces the map $m\mapsto -\sigma^*m$, since $y^{-1}(0)\subseteq yM\subset \pi^*M$. Conversely, suppose we start with an $S$-module $N$ with an equivariant structure $\cA_\sigma$ such that $\cA_\sigma m =\sigma^*m$ on $N/yN$, and $\cA_\sigma m =-\sigma^*m$ for $m\in N$ such that $ym=0$. In this case, we may split $N$ into eigenspaces for $\ov \sigma=\sigma^*\circ \cA_\sigma$: the $\Z/2\Z$-equivariance exactly imposes the condition that $\ov \sigma^2=1$, hence the eigenvalues are contained in $\{\pm 1\}$. Let $N=N_+\oplus N_{-}$, where $N_\pm$ is the sub-$R$-module on which $\ov \sigma$ acts as $\pm \Id$. The above assumption on $\cA_\sigma$ implies that $\ker y\subset N_{-}$ and that $N_{-}\subset \im y$, since $y$ interchanges the eigenspaces. Therefore, $N = N_+\oplus yN_+ =\pi^* N_+$, so choosing the eigenspace $N_+$ is the inverse to the pullback functor with the equivariant structure. It is straightforward to check that morphisms of $R$-modules are in bijection (via the pullback) with equivariant morphisms of $S$-modules.

It only remains to show that for an equivariant module $N$, the condition $Li_Y^*\cA_\sigma = \Id$ is equivalent to the condition that $\ov\sigma$ acts as $1$ on $N/yN$ and as $-1$ on $y^{-1}(0)\subseteq N$. Using the presentation $S=R[y]/(y^2-x)$, we see that $I_Y =\langle  g^\sigma-g\rangle = yS$. A direct computation using the resolution $S\overset{y}\to S$ shows that $N/yN\cong L^0i_Y^*N$ and $y^{-1}(0)\cong L^1i_Y^*N$, yet these isomorphisms do not necessarily commute with $\cA_\sigma$, as we will show.


We begin by constructing a free resolution of $N$ that carries a compatible equivariant structure. First, split $N$ into eigenspaces $N = N_{+}\oplus N_-$ as before. Take generating sets of $N_+$ and $N_-$ as $R$-modules and consider the free $S$-module generated by the union, which we will write $F_0=F_0^+\oplus F_0^-$ ($F_0^{\pm}$ is generated by a generating set of $N_{\pm}$). We have the surjection $d_0:F_0^+\oplus F_0^-\to N$, and its pullback $\sigma^*F_0^+\oplus \sigma^*F_0^-\to \sigma^*N$. Next we extend the equivariant structure to $F_0$: For $e$ a basis element of $F_0^{\pm}$, we let $\cA_\sigma(e) = \pm \sigma^* e$. This ensures that we have the rightmost commutative square in the following diagram:


\[
\begin{tikzcd}[column sep = 2 em, ampersand replacement = \&]
F_2 \arrow[d,"\cA_\sigma"]\arrow[r,"d_2"]\&
F_1 \arrow[d,"\cA_\sigma"]\arrow[r,"d_1"]\&
F_0 \arrow[d,"\cA_\sigma"]\arrow[r,"d_0"]\& N\arrow[d,"\cA_\sigma"]\arrow[r]\& 0 \\
\sigma^*F_2\arrow[r,"\sigma^*d_2"]\&
\sigma^*F_1\arrow[r,"\sigma^*d_1"]\&
\sigma^*F_0\arrow[r,"\sigma^*d_0"]\& \sigma^*N \arrow[r]\& 0.
\end{tikzcd}
\]

Now, let $K_0=\ker d_0$. Notice that $\cA_\sigma K_0 = \sigma^*K_0$, so $K_0$ inherits the equivariant structure. Thus, we can iterate the process to obtain the beginning of a free resolution $F_2\overset{d_2}\to F_1\overset{d_1}\to F_0\overset{d_0}\to N\to 0$ where every term is equivariant and the above diagram is commutative.


Let us write $i=i_Y$. $Li^*N$ is represented by the complex $i^*F_\bullet = \cdots \to i^*F_2\overset{i^*d_2}\to i^*F_1\overset{i^*d_1}\to i^*F_0$ (the quasiisomorphism $Li^*F_\bullet\cong Li^*N$ is induced by the map $d_0:F_0\to N$), and $i^*M = M/yM$. We note that

\[
L^0i^*_YN\cong H^0(i^*F_\bullet) =\coker(i^*F_1\to i^*F_0)= \frac{F_0}{yF_0 + d_1F_1} \overset{d_0}{\underset\cong\longrightarrow} \frac{N}{yN}.
\]
The map $d_0$ commutes with $\ov \sigma$: therefore if $\ov\sigma$ acts as the identity on one side, it will do so in the other, as desired. For $L^1i^*N$, we note the following:
\[
L^1i^*_YN\cong H^{-1}(i^*F_\bullet) =\frac{\ker (i^*F_1\to i^*F_0 )}{\im (i^*F_2\to i^*F_1 )} = \frac{F_1\cap d_1^{-1}(yF_0)}{yF_1 + d_2F_2}= \frac{F_1\cap d_1^{-1}(yF_0)}{yF_1 + d_1^{-1}(0)} \overset{d_1}{\underset\cong\longrightarrow} \frac{d_1F_1\cap yF_0}{yd_1F_1}.\]
It is straightforward to check that $d_1$ induces an isomorphism. As before, $d_1$ commutes with $\ov\sigma$. Now, we distinguish two cases. Suppose first that $y^2\neq 0$. Since $R$ is a domain, $y$ is not a zero divisor. $F_0$ is free, so for any submodule $F'\subseteq F_0$, $y^{-1}(yF')=F'$, since $F_0$ is free. Therefore, $y$ induces an isomorphism:
\[
\frac{d_1F_1\cap yF_0}{yd_1F_1} \overset{y}{\underset\cong\longleftarrow} \frac{y^{-1}d_1F_1}{d_1F_1} .
\]
Notice that this map does not commute with $\ov \sigma$, but rather $y\circ \ov\sigma=-\ov \sigma\circ  y$. Therefore, if the action of $\ov\sigma$ on $L^1i^*N$ is $1$, the action on the right hand side is given by $-1$. Finally, notice that $d_0$ maps $y^{-1}d_1F_1/d_1F_1$ isomorphically into $y^{-1}(0)\subseteq N$, and that $d_0$ commutes with $\ov\sigma$. Notice that since $\sigma=\Id$ on $Y$, $\ov \sigma=\sigma^*\circ \cA_\sigma=\cA_\sigma$. This shows what we wished: if $\ov \sigma$ acts as $1$ on $N/yN$ and as $-1$ on $y^{-1}(0)\subset N$, then $\cA_\sigma$ acts as the identity on $Li^*N$.

Let us consider the case where $y^2=0$.





\end{proof}

\begin{proof}[Proof of Proposition~\ref{prop:EllipticModulesAreDihedral}]

Consider an elliptic module $M$, with $\cA:\pi_1^*M\to \pi_2^*M$. Lemma~\ref{lem:z2-descent} yields an equivariant sheaf structure on $\pi_1^*M$, $\cA_{\sigma_1}:\pi_1^*M\to \sigma_1^*\pi_1^*M$, and $\cA_{\sigma_1}$ is the identity at the ramification points. Making $\cA_\sigma=\cA$, we obtain a $G$-equivariant structure: the relations on $G$ are generated by $\sigma^2=\sigma_1^2=\Id$, and indeed $\sigma^*\cA\circ \cA_\sigma = \Id$.

Let us now go in the opposite direction. Let $\wt M$ be an equivariant sheaf on $E$ as in the statement. Lemma~\ref{lem:z2-descent} shows that there's a unique $M\in \QCoh (\P^1)$ such that $\wt M =\pi_1^*M$ with the induced $\sigma_1$-equivariant structure. Further, $\cA_{\sigma}$ induces an elliptic module structure on $M$.

It is straightforward to check that the constructions are functorial given that Lemma~\ref{lem:z2-descent} gives a functor, and that they are mutually inverse.

\end{proof}

Proposition~\ref{prop:FlatEllipticModulesAreDihedral-normalization} requires some background. If $E$ is singular and reduced, then the results of \cite{Ferrand} allow us to relate quasicoherent sheaves on $E$ with sheaves on its normalization $\wt E$. These results require flatness at the singular points, so we cannot have an equivalence (see Remark~\ref{rem:flatnessIsNeeded} for an example). However, we do have an equivalence between the full subcategories of flat sheaves in the equivariant setting, analogously to the theorem in loc. cit. We will recall it here for convenience.

This theorem describes the relation between modules over a fiber product of rings $B\times_{B'} A'$ and modules over $B$, $B'$ and $A'$. We reproduce the statement and the constructions here for convenience. Start with a Cartesian square of rings, and the corresponding commutative square of pullbacks (i.e. tensors):
\[
\begin{tikzcd}[column sep = 3 em, ampersand replacement = \&]
B\times_{B'} A' \arrow[r]\arrow[d] \&  A\arrow[d]
\& \RMod(B\times_{B'} A') \arrow[r]\arrow[d] \&  \RMod(A')\arrow[d] \\
B \arrow[r] \& B' \& \RMod(B) \arrow[r] \& \RMod(B') \& .
\end{tikzcd}
\]
The diagram on the right hand side induces a functor $T:\RMod(B\times_{B'} A')\to \RMod(B)\times_{\RMod(B')} \RMod(A')$, which concretely is given by
\[
T (M) = \left(B\otimes M, A'\otimes M, \cong \right).
\]
Recall that $\RMod(B)\times_{\RMod(B')} \RMod(A')$ is the category of triples consisting of a $B$-module $N_B$, an $A'$-module $M_{A'}$ and an isomorphism $\phi:B'\otimes N_B\cong B'\otimes N_{A'}$. In the definition of $T(M)$, this isomorphism is the canonical one. Ferrand constructs a right adjoint $S$ to $T$, defined as follows: an object $N = (N_B,N_{A'},\phi)$ is mapped to 
\[
S(N) = \{(n_B,n_{A'})\in N_B\times N_{A'}: \phi(1\otimes n_B) = 1\otimes n_{A'} \}.
\]
$S$ is defined on morphisms in the obvious way. Th\'eor\`eme 2.2 in \cite{Ferrand} includes the following statement.
\begin{theorem}[Ferrand]\label{thm:Ferrand}
For $A',B',B,S,T$ as above, assume that $A'\to B'$ is surjective. Then $S$ and $T$ are inverse equivalences between the full subcategories of consisting of flat modules.
\end{theorem}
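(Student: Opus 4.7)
The backbone of the proof is the short exact sequence of $A$-modules
\[
0 \longrightarrow A \longrightarrow B \oplus A' \longrightarrow B' \longrightarrow 0,
\]
where the first map is $a\mapsto (\bar a_B,\bar a_{A'})$ and the second is $(b,a')\mapsto \bar b-\bar{a'}$. Left-exactness is the fiber-product property of $A$; right-exactness uses precisely the hypothesis that $A'\twoheadrightarrow B'$ is surjective. First I would verify the adjunction $T\dashv S$, with no flatness hypothesis: a morphism $M\to S(N)$ is a pair of $A$-linear maps $M\to N_B$, $M\to N_{A'}$ compatible under $\phi$ with base change to $B'$, which by the ordinary tensor-hom adjunction for $A\to B$ and $A\to A'$ is the same as a morphism $T(M)\to N$ in the fiber-product category. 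Base change already shows $T$ preserves flatness.

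Next I would establish that the unit $M\to ST(M)$ is an isomorphism whenever $M$ is $A$-flat. Tensoring the displayed short exact sequence with $M$ and using $\mathrm{Tor}_1^A(B',M)=0$ gives the exact sequence
\[
0\longrightarrow M\longrightarrow (B\otimes_A M)\oplus (A'\otimes_A M)\longrightarrow B'\otimes_A M\longrightarrow 0,
\]
and unwinding definitions the leftmost map is exactly the unit, so it is injective with image the equalizer, i.e.\ an iso onto $ST(M)$.

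The heart of the argument is the counit. Fix $N=(N_B,N_{A'},\phi)$ with $N_B$ flat over $B$ and $N_{A'}$ flat over $A'$, set $M=S(N)$, and consider the defining short exact sequence of $A$-modules
\[
0\longrightarrow M\longrightarrow N_B\oplus N_{A'}\longrightarrow B'\otimes_{A'}N_{A'}\longrightarrow 0
\]
(right-exactness uses $N_{A'}\twoheadrightarrow B'\otimes_{A'}N_{A'}$). I would base-change this sequence along $A\to B$ and along $A\to A'$, using the identities $B\otimes_A A'=B'$ and $B\otimes_A N_{A'}=B'\otimes_{A'}N_{A'}$ (both consequences of $A'\twoheadrightarrow B'$), and $B\otimes_A N_B=N_B$ (since $J:=\ker(A\to B)$ annihilates $N_B$). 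Computing the kernel of the resulting right-hand map $N_B\oplus (B'\otimes_{A'}N_{A'})\xrightarrow{(\phi,-\mathrm{Id})} B'\otimes_{A'}N_{A'}$ identifies it with $N_B$, and symmetrically on the $A'$ side, which gives the counit isomorphisms $B\otimes_A M\cong N_B$, $A'\otimes_A M\cong N_{A'}$. Combined with the unit being iso, this yields the equivalence. Simultaneously, testing against arbitrary $A$-modules $X$ using the same base-change machinery gives $\mathrm{Tor}_i^A(X,M)=0$ for $i\ge 1$, so $S$ preserves flatness as well.

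The main obstacle is the Tor comparison hidden in the base-change step: one needs the vanishing of $\mathrm{Tor}_1^A(B,B'\otimes_{A'}N_{A'})$ and $\mathrm{Tor}_1^A(A',B'\otimes_{A'}N_{A'})$, and more generally the vanishing $\mathrm{Tor}_i^A(X,M)=0$ required for flatness of $S(N)$. I would extract these from the long exact $\mathrm{Tor}^A(X,-)$-sequence applied to the backbone SES, together with the flat-base-change identifications $\mathrm{Tor}_i^A(X,N_B)\cong \mathrm{Tor}_i^A(X,B)\otimes_B N_B$, $\mathrm{Tor}_i^A(X,N_{A'})\cong \mathrm{Tor}_i^A(X,A')\otimes_{A'} N_{A'}$, and $\mathrm{Tor}_i^A(X,B'\otimes_{A'}N_{A'})\cong \mathrm{Tor}_i^A(X,B')\otimes_{A'} N_{A'}$, which come from the respective flatness assumptions on $N_B$ and $N_{A'}$. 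The combinatorics of these long exact sequences is what makes this step the delicate one, but everything is driven by the single short exact sequence of the first paragraph.
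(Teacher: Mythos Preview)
The paper does not prove this statement; it merely records it as a special case of Th\'eor\`eme~2.2 in \cite{Ferrand} and uses it as a black box in the proof of Proposition~\ref{prop:FlatEllipticModulesAreDihedral-normalization}. There is therefore no ``paper's own proof'' to compare against.

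That said, your sketch is essentially the standard argument one finds in Ferrand's paper (or in later expositions of Milnor patching): the key short exact sequence $0\to A\to B\oplus A'\to B'\to 0$, the adjunction, the unit computation via tensoring with a flat $M$, and then the counit and flatness of $S(N)$ via the defining sequence for $S(N)$. You have correctly identified the delicate point, namely the Tor vanishing needed when base-changing the sequence $0\to S(N)\to N_B\oplus N_{A'}\to B'\otimes_{A'}N_{A'}\to 0$ along $A\to B$ and $A\to A'$. Your proposed method---flat base change for Tor combined with the long exact sequence coming from the backbone SES---is the right idea, though as you note the combinatorics require care (in Ferrand's original, this is handled by an inductive dimension-shifting argument). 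One small clarification: the identity $B\otimes_A A'\cong B'$ you invoke does hold, and follows from $B\cong A/I$ with $I\cong\ker(A'\to B')$ under the projection $A\to A'$, but it is worth spelling out since it is not entirely immediate from the fiber-product description.
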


\begin{proof}[Proof of Proposition~\ref{prop:FlatEllipticModulesAreDihedral-normalization}]


Let us start by showing that we are in the right situation to apply Theorem~\ref{thm:Ferrand}. Rings will be replaced by schemes affine over $E$, and analogous statements hold simply because modules and pullbacks are preserved by localization.

Let $\sigma_2=\sigma\sigma_1\sigma$ be the deck involution for $\pi_2$. Let $\wt X$ be the subscheme of $\wt E$ given as the fixed subscheme of $\sigma_1\sigma_2$. This is the subscheme cut out by the ideal sheaf $I_{\wt X} = \langle f-f^{\sigma_1\sigma_2}:f\in \O_{\wt E}\rangle =\langle f^{\sigma_1}-f^{\sigma_2}:f\in \O_{\wt E}\rangle $. Letting $X=\pi(\wt X)$, we have a commutative square:
\begin{equation}\label{eq:pinching}
\begin{tikzcd}[column sep = 3 em, ampersand replacement = \&]
\wt X\arrow[r,"i_{\wt X}"]\arrow[d,"\pi"] \&  \wt E\arrow[d,"\pi"] \\
X \arrow[r,"i_{ X}"] \& E .
\end{tikzcd}
\end{equation}

\begin{lemma}\label{lem:pinching}
With the notation above, $\O_E = \pi_*\O_{\wt E}\times_{i_{X*}\pi_*\O_{\wt X}}i_{X*}\O_X$. Further, $X$ is the (affine scheme) quotient of $\wt X$ by the action of $\sigma_1$, so $\pi_1$ induces an isomorphism between $X$ and its image. The support of $X$ is exactly $Z$, the singular set of $E$. Here we assume that the field $k$ is perfect and not of characteristic $2$.
\end{lemma}

\begin{proof}


Each of the two maps $\pi_i\circ \pi:\wt E\to \P^1$ is a Galois ramified cover with Galois group $\langle \sigma_i\rangle = \Z/2\Z$, so it identifies $\O_{\P^1}$ with $(\pi_{i*}\pi_{*}\O_{\wt E})^{\sigma_i}$, where the notation $R^{\sigma_i}$ denotes $\{f\in R: f^{\sigma_i}=f\}$. Since $E$ is the image of $\wt E$ in $\P^1\times \P^1$, $\O_E$ is generated by functions on each of the $\P^1$ factors. Our first claim is that $\pi_* \O_{\wt E}^{\sigma_1}\otimes_k \pi_* \O_{\wt E}^{\sigma_2}$ generates $\O_E$. This statement must be understood in the following sense: there is a basis of open sets $U$ of $E$ such that $\O_E(U)$ is generated by $\sigma_1$-invariant functions in $\pi_*\O_{\wt E}(\pi^{-1}(U))$, together with $\sigma_2$-invariant functions in $\pi_*\O_{\wt E}(\pi^{-1}(U))$. In particular, if we say $f\in \O_{\wt E}(V)$ is $\sigma_i$-invariant, we mean that $f^{\sigma_i}$ is regular on $V$ as well. Further, we make the same claim about $\pi_1{}_*\O_E$: we will show that there is a basis of open sets $U$ of $\P^1$ such that $(\pi_1{}_*\O_E)(U)$ is generated by $\sigma_1$-invariant functions and $\sigma_2$-invariant functions in $(\pi_1\pi)_*\O_{\wt E}(U)$. Note that all the rings of regular functions we mention can be thought of as contained in the ring of rational functions of $\wt E$ (recall that $\wt E$ might be disconnected, in which case its ring of rational functions is a sum of fields), so we can talk about containments and generation.

First, choose a basis of open sets of $E$ of the form $V = E\cap (U_1\times U_2)$, where $U_i\subseteq \P^1$ are affine open subschemes. The ring $\O_E(V)$ is generated by $\pi_i^{-1}{\O_{\P^1}}(U_i)=(\pi_*\O_{\wt E}((\pi_i\pi)^{-1}(U_i)))^{\sigma_i}$, for $i=1,2$, 
where we can think of all the rings as contained in the ring of rational functions of $\wt E$. Since $\pi_i^{-1}(U_i)\supseteq V$, $\O_{\wt E}((\pi_i\pi)^{-1}(U_i))\subseteq \O_{\wt E}(\pi^{-1}(V))$, so we have the desired statement on $E$: $\O_E(V)=\pi_*\O_{\wt E}(\pi^{-1}(V))^{\sigma_1}\cdot \pi_*\O_{\wt E}\pi^{-1}(V))^{\sigma_2}$. Let us see what happens on $\P^1$: suppose we have an open set as above, $E \cap (U_1\times U_2)$, and consider any open $U\subseteq \P^1$ such that $\pi_1^{-1}(U)\subseteq U_1\times U_2$. In this case, we have the simple observation that $\pi_1^{-1}(U) = E \cap (U\times U_2)$, so the reasoning above applies, and therefore $\O_E(\pi_1^{-1}U)$ is generated by $ (\pi_*\O_{\wt E}((\pi_1\pi)^{-1}(U)))^{\sigma_1}$ and $(\pi_*\O_{\wt E}((\pi_2\pi)^{-1}(U_2)))^{\sigma_2}$. By assumption, $\pi_1^{-1}(U)\subseteq \pi_2^{-1}(U_2)$, so
\[
(\pi_*\O_{\wt E}((\pi_2\pi)^{-1}(U_2)))^{\sigma_2}\subseteq (\pi_*\O_{\wt E}((\pi_1\pi)^{-1}(U)))^{\sigma_2}\subseteq \O_E(\pi_1^{-1}U).
\]
In particular, $\O_E(U)$ is generated by $(\pi_*\O_{\wt E}((\pi_1\pi)^{-1}(U)))^{\sigma_i}$ for $i=1$ and $i=2$. In particular, there is a basis for the topology on $\P^1$ over which the equation $\pi_{1*}\O_{E} = \pi_{1*}\pi_*\O_{\wt E}^{\sigma_1}\cdot \pi_{1*}\pi_*\O_{\wt E}^{\sigma_2}$ holds.

All four maps in the Diagram~(\ref{eq:pinching}) are affine, as is the map $\pi_1:E\to \P^1$. We will slightly abuse notation and use $\O_{\wt X},\O_X,\O_{\wt E},\O_E$ to refer to their pushforwards to $\P^1$ by the map $\pi_1$, taking advantage of the fact that schemes affine over $\P^1$ are equivalent to quasicoherent sheaves of $\O_{\P^1}$-algebras. Then, the statement we are trying to prove can be written $\O_E = \O_{\wt E}\times_{\O_{\wt X}} \O_X$. We will think of quasicoherent sheaves on a scheme $\Xi$ affine over $\P^1$ as sheaves of modules over $\O_{\Xi}$. Further, the discussion above shows that we may think of $\pi_*\O_E$ as $\pi_1{}_*\pi_*\O_{\wt E}^{\sigma_1}\cdot \pi_2{}_*\pi_*\O_{\wt E}^{\sigma_2}$, which we will just abbreviate as $\O_{\wt E}^{\sigma_1}\O_{\wt E}^{\sigma_2}$. We have the following diagram:
\[
\begin{tikzcd}[column sep = 5 em, ampersand replacement = \&]
\O_E = \O_{\wt E}^{\sigma_1}\O_{\wt E}^{\sigma_2} \arrow[r,hook]\arrow[d,"",two heads]\& \O_{\wt E}\arrow[d,"",two heads] \\
\frac{\O_{E}}{I_{\wt X}\cap \O_E} \arrow[r,hook,""]\& \O_{\wt X} .
\end{tikzcd}
\]
We claim it is Cartesian, by first showing that $I_{\wt X}\subset \O_E$: this is due to the fact that generators of $I_{\wt X}$ (on some small enough open set) can be written as $f-f^{\sigma_1\sigma_2} = (f + f^{\sigma_1})-(f^{\sigma_1} +f^{\sigma_1\sigma_2})\in \O_{\wt E}^{\sigma_1} + \O_{\wt E}^{\sigma_2}\subset \O_E$. Now, $\O_E$ is contained in the fiber product $\frac{\O_{E}}{I_{\wt X}\cap \O_E}\times_{\O_{\wt X}} \O_{\wt E}$, so we need to show the other containment: a local section in the fiber product is $s\in \O_{\wt E}$ such that $s + I_{\wt X}\in \O_E+I_{\wt X}$. Since $I_{\wt X}\subset \O_E$, it follows that $s\in \O_E$.

We have the desired cartesian square of sheaves of rings. Notice that $X=\pi(\wt X) = \Spec \O_E/I_{\wt X}$. Finally, to show that $X=\wt X/\langle \sigma_1\rangle$, we need to show that $\left({\O_{\wt E}/I_{\wt X}} \right)^{\sigma_1} = {\O_E}/{{I_{\wt X}}}$. First, $ {\O_E}/{{I_{\wt X}}}$ is generated as a sheaf of rings by $\O_{\wt E}^{\sigma_1}$ and $\O_{\wt E}^{\sigma_2}$, so in order to show that $\left({\O_{\wt E}/I_{\wt X}} \right)^{\sigma_1} \supseteq {\O_E}/{{I_{\wt X}}}$ we only need to check that $\O_{\wt E}^{\sigma_2}\subseteq \O_{\wt E}^{\sigma_1}+I_{\wt X}$. An element $f\in \O_{\wt E}^{\sigma_2}$ can be written as

\[
f = \frac{f+f^{\sigma_1}}{2} + \frac{f-f^{\sigma_1}}{2} = \frac{f+f^{\sigma_1}}{2} + \frac{f-f^{\sigma_2\sigma_1}}{2} \in \O_{\wt E}^{\sigma_1} + I_{\wt X}.
\]
For the other containment, let $f+I_{\wt X}\in \left({\O_{\wt E}/I_{\wt X}} \right)^{\sigma_1}$, i.e. suppose $f^{\sigma_1}-f=g\in I_{\wt X}$. Then $g^{\sigma_1} = -g$, $f + g/2\in \O_{\wt E}^{\sigma_1}\subset \O_E$ and $f +I_{\wt X} = f + g/2 + I_{\wt X}$, showing the desired containment.

Finally, let us show that the points of $X$ are those where $E$ is singular. First note that $I_{\wt X}$ is contained in the conductor of $\O_E\subseteq \O_{\wt E}$, since $I_{\wt X}\O_{\wt E} = I_{\wt X}\subseteq \O_E$, just because $I_{\wt X}$ is an ideal of $\O_{\wt E}$. Since the conductor is supported on the singular locus of $E$ (i.e. the points where $\pi$ is not an isomorphism), it follows that $\wt X$ contains $\pi^{-1}(Z)$.


For the other containment, suppose $p$ is a closed point in $\wt X$, i.e. $\sigma_1p=\sigma_2p$. There are two possible situations, depending on whether $\sigma_1p=p$. Start by assuming that $\sigma_1p\neq p$. In this case, for $i=1,2$, $\pi_i\pi(p) =  \pi_i\pi(\sigma_i p) =  \pi_i\pi(\sigma_{3-i} p)$, which implies that the map $(\pi_1\pi,\pi_2\pi):\wt E\to E\subset \P^1\times \P^1$ identifies $p$ and $\sigma_1p$. Therefore, $\pi$ is not an isomorphism around $p$, so the stalk of $E$ at $\pi(p)$ is not normal, hence $\pi(p)$ is singular.

Assume now that $\sigma_1p=p$, and let $m\subset \O_{\wt E}$ be the corresponding maximal ideal. Suppose $\sigma_1$ does not act as the identity on $\O_{\wt E}/m$. Then 
$(\O_{\wt E}/m)^{\sigma_1}\subsetneq \O_{\wt E}/m$. We have already seen that $\O_E/I_{\wt X} = (\O_{\wt E}/I_{\wt X} )^{\sigma_1}$, and therefore
\[
\frac{\O_E}{m\cap \O_E}\overset{I_{\wt X}\subset m\cap \O_E}=
\frac{\O_E/I_{\wt X}}{(m\cap \O_E)/I_{\wt X}}=\frac{(\O_{\wt E}/I_{\wt X} )^{\sigma_1}}{(m\cap \O_E)/I_{\wt X}}\subseteq \left(\frac{\O_{\wt E}}{m }\right)^{\sigma_1}\subsetneq \frac{\O_{\wt E}}{m}.
\]
So, as before, $\pi$ is not an isomorphism around $p$, so $\pi(p)$ is singular.

Lastly, suppose $\sigma_1$ acts as the identity on $\O_{\wt E}/m$. Then $\sigma_1$ acts linearly on $m/m^2$, which is a one dimensional $\O_{\wt E}/m$-vector space. Since $\sigma_1$ is an involution, it acts as $-1$ or as $1$. Suppose it acts as $1$: then for a generator $f$ of $m$, we have that $f^{\sigma_1} \in f+m^2$. Therefore, for any $n$, $ (f^n)^{\sigma_1} \in f^n + f^{n-1}m^2 = f^n + m^{n+1}$, so $\sigma_1$ acts as the identity on the completion of $\O_{\wt E}/m$, so it acts as the identity on the connected component of $\wt E$ containing $m$. Since we are assuming that $E$ is reduced, this cannot happen. Therefore, $\sigma_1$ acts as $(-1)$ on $m/m^2$.

We are left with the situation where $\sigma_1$ acts as the identity on $\O_{\wt E}/m$ and as $(-1)$ on $m/m^2$, and so does $\sigma_2$, since $p\in \wt X$, the subscheme where $\sigma_1=\sigma_2$. 
Let us prove that $I_{\wt X}\subset m^2$, i.e. that $\sigma_1 = \sigma_2\mod m^2$. The map $\sigma_1-\sigma_2$ is a $k$-linear derivation of $\O_{\wt E}/m$ with values in $m/m^2$: first of all, if $a\in m$, then $a^{\sigma_1} \equiv a^{\sigma_2} \equiv  -a \mod m^2$, and $\sigma_1=\sigma_2=1$ when they act on $\O_{\wt E}/m$, so it is a $k$-linear map as desired. Notice further that $m/m^2\cong \O_{\wt E}/m$ as $\O_{\wt E}/m$-vector spaces. Finally, we can check it is indeed a derivation: for any $a,b\in \O_{\wt E}/m$ and any lifts to $\O_{\wt E}/m^2$, we have that
\[
(ab)^{\sigma_1-\sigma_2} -a(b^{\sigma_1-\sigma_2}) -b(a^{\sigma_1-\sigma_2}) = (a-a^{\sigma_1})(b-b^{\sigma_1})-(a-a^{\sigma_2})(b-b^{\sigma_2})\in m^2+m^2 = m^2.\]
Finally, since $k$ is perfect, the finite field extension $k\subseteq \O_{\wt E}/m$ is separable, and therefore the only $k$-linear derivation of $\O_{\wt E}/m$ is $0$, so $\sigma_1=\sigma_2\mod m^2$ as desired.

Therefore, $I_{\wt X}\subseteq m^2$, and $(\O_{\wt E}/m^2)^{\sigma_1}\subsetneq \O_{\wt E}/m^2$. As before, we have that
\[
\frac{\O_E}{m^2\cap \O_E}\overset{I_{\wt X}\subset m^2\cap \O_E}=
\frac{\O_E/I_{\wt X}}{(m^2\cap \O_E)/I_{\wt X}}=\frac{(\O_{\wt E}/I_{\wt X} )^{\sigma_1}}{(m^2\cap \O_E)/I_{\wt X}}\subseteq \left(\frac{\O_{\wt E}}{m^2 }\right)^{\sigma_1}\subsetneq \frac{\O_{\wt E}}{m^2}.
\]
Therefore, $\pi$ is not an isomorphism around $p$, so $p$ is a singular point.

\end{proof}

Proposition~\ref{prop:EllipticModulesAreDihedral} shows that the category of elliptic modules is equivalent to the category of $G$-equivariant sheaves on $E$ with the condition that $Li^*_Y\cA_{\sigma_1} = \Id$. A sheaf on $\P^1$ is flat at $\pi_1(Z)$ if and only if it has no torsion supported on $\pi_1(Z)$, equivalently, if and only if its pullback to $E$ has no torsion supported on $E$. Therefore, the equivalence into Proposition~\ref{prop:EllipticModulesAreDihedral} restricts to an equivalence between the desired subcategory of elliptic modules and the category of $G$-equivariant sheaves on $E$ which are flat at $Z$ and such that $Li^*_Y\cA_{\sigma_1} = \Id$.

Let us start by showing how $\pi^*$ maps equivariant modules to equivariant modules. Let $M\in \Mod(E)$: we have the maps $\pi^*\cA_{\sigma}:\pi^*M\to \pi^*\sigma^*M = \sigma^*\pi^*M$, and $\sigma^* (\pi^*\cA) = \pi^*\sigma^* \cA = (\pi^*\cA)^{-1}$. Similarly, we have $\pi^*\cA_{\sigma_1}$ and both maps together make $\pi^*M$ $G$-equivariant. If $M$ is flat at $Z$, $\pi^*M$ is flat at $\pi^{-1}(Z)$. Further, suppose $Li^*_Y\cA_{\sigma_1}= \Id$. Then, considering the restriction $\pi:\wt Y\to Y$, we have that $Li^*_{\wt Y}L\pi^*\cA_{\sigma_1} = L\pi^*Li^*_Y\cA_{\sigma_1} = \Id$. Now, note that on a neighborhood of the points of $Y\setminus Z$, $\pi$ is an isomorphism, and therefore $L\pi^*=\pi^*$. On the other hand, on a neighborhood of the points of $Y\cap Z$, we are assuming that $\pi_1^*M$ is flat, and therefore $L\pi^*\cA_{\sigma_1}=\pi^*\cA_{\sigma_1}$. Therefore, $Li^*_{\wt Y}\pi^*\cA_{\sigma_1} =\Id$ as desired. This provides a functor going one way.

Let us now construct the inverse to $\pi^*$. Given Lemma \ref{lem:pinching}, we are in the situation where Theorem~\ref{thm:Ferrand} applies. We have the adjoint pair of the descent functor $S:\QCoh(\wt E)\times_{\QCoh(\wt X)} \QCoh(X)\to \QCoh(E)$ and its right adjoint $T$ given by pullbacks to $\wt E$ and $X$. $S$ is given on objects by mapping a triple $N_{\wt E}\in \QCoh(\wt E)$, $N_X\in \QCoh(X)$ and $\phi: i_{\wt X}^*N_{\wt E}\cong \pi^*N_X$ to
\[
S(N_{\wt E},\phi,N_X) = \{(\pi_*s_{\wt E},i_{\wt X}{}_*s_X)\in \pi_*N_{\wt E}\times i_X{}_*N_X: \phi(i_{\wt X}^*s_{\wt E}) = \pi^*s_X \}.
\]

Consider $\wt M\in \Mod(\wt E)$ satisfying the hypotheses in the statement. From $\wt M$ we construct an object in $\QCoh(\wt E)\times_{\QCoh(\wt X)} \QCoh(X)$: The $\langle \sigma_1 \rangle$-equivariant structure $\cA_{\sigma_1}$ satisfies the hypothesis of Lemma~\ref{lem:z2-descent}, so there is a sheaf $M\in \QCoh(\P^1)$ such that $\pi^*\pi_1^*M = \wt M$ with this equivariant structure. Take $T(\pi_1^*M) =(\wt M,i_X^*\pi_1^*M)$ to be the desired object. Since $\wt M$ is flat at $\pi^{-1}(Z)$, $M$ is flat (i.e. torsion-free) at $\pi(Z)$, and $\pi_1^*M$ is flat at $Z$. Equivalently, by the local criterion for flatness, $\pi_1^*M$ is flat at $X$: Lemma~\ref{lem:pinching} shows $Z$ and $X$ have the same support. Theorem~\ref{thm:Ferrand} then implies that $\pi_1^*M$ and $T(\pi_1^*M)$ are in the categories on which $T$ and $S$ are inverse equivalences, so in particular we have the natural isomorphism $\pi_1^*M \to S(T(\pi_1^*M))$.

To give $M$ the structure of an elliptic module, we need to construct a $\sigma$-equivariant structure. The $\sigma$-equivariant structure of $\wt M$ can be enhanced to one on $T(\pi_1^*M)$, by simply restricting $\cA_{\sigma}$ to $\wt X$ (note that $\wt X$ is $G$-invariant). Now, we simply take $S(\cA_{\sigma}):S(\wt M)\to S(\sigma^*M)$. From the definition of $S$ above, it is clear that $S(\sigma^*M)$ is naturally isomorphic to $\sigma^*S(M)$, providing the desired equivariant structure, since indeed $\sigma^*S(\cA_{\sigma})\circ S(\cA_{\sigma}) =\Id$.

Let us show that this construction is a functor: a morphism of $G$-equivariant sheaves $\wt f:\wt M\to \wt N$ is mapped to a morphism $f:M\to N$ of sheaves on $\P^1$ since Lemma~\ref{lem:z2-descent} provides a functor (and $\sigma_1$-equivariant maps descend to $\P^1$). Further, $\pi_1^*f$ will be $\sigma_1$-equivariant.

It remains to show that if we start with a morphism of $G$-equivariant sheaves, then $\pi_1^*f$ will be $\sigma$-equivariant. Suppose that $\sigma^*\wt f\circ \cA_\sigma= \cA_\sigma\circ \wt f$. We have the morphism $T(\pi_1^*f):T(\pi_1^*M)\to T(\pi_1^*N)$, and we want to show that $\sigma^*T(\pi_1^*f)\circ \cA_\sigma= \cA_\sigma\circ T(\pi_1^*f)$. For this, we need the identity to hold on $\wt E$ and on $X$. It holds on $\wt E$ by hypothesis, and in order to hold on $X$, must have that
\[
i_X^*\pi_2^* f \circ i_X^*\cA_\sigma = i_X^*\sigma^*\cA_\sigma \circ i_X^*\pi_1^*f.
\]
It is true that $\pi^*$ applied to the above equation holds, since $\pi^*i_X^*\pi_1^*f=i_{\wt X}^*\wt f$. Now, $\pi:\wt X\to X$ is the restriction of $\pi_1\pi:\wt E\to \P^1$ to the preimage of $\pi_1(X)\cong X$, so it is a faithfully flat map. Therefore, $\pi^*$ from $X$ to $\wt X$ is a faithful functor, and the above equation holds since it holds after taking $\pi^*$. Now we have that $\sigma^*T(\pi_1^*f)\circ \cA_\sigma= \cA_\sigma\circ T(\pi_1^*f)$. Applying $S$ to this equation we have the desired equivariance of $f$.

Given that $S$ and $T$ are mutually inverse, it is straightforward to check that the functors we have constructed are mutually inverse.

\end{proof}




\begin{remark}\label{rem:flatnessIsNeeded}
The condition of flatness at $Z$ is indeed necessary. Consider the following example: Let an affine open set of $E$ be cut out by the equation $(y-qx)(y-q^{-1}x)=0$, for some $q\in k^\times$ with $q^2\neq 1$. Then $\wt E$ is the disjoint union of two lines: $\wt E= \Spec k[t_1]  \times k[t_2]$, where $\pi$ is given by $\pi^*x= (t_1,t_2)$ and $\pi^*y= (qt_1,q^{-1}t_2)$. The dihedral group $G$ acts as follows:
\[
\begin{array}{rrcl|rrcl}
\sigma_1: & x & \leftrightarrow & x & \sigma: & x & \leftrightarrow & y \\
 & y & \leftrightarrow & (q+q^{-1})x-y &  & y & \leftrightarrow & x \\
 & (t_1,0) & \leftrightarrow & (0,t_2)  &  & (t_1,0) & \leftrightarrow & (0,q^{-1}t_2) \\
\end{array}
\]
We can consider the following $G$-equivariant sheaf on $\wt E$: let $\wt M = k[t_1]/(t_1) \times k[t_2]/(t_2)$, and let $s_i$ be a basis element for $k[t_i]/(t_i)$. Consider the following equivariant structure:\[
\cA_{\sigma_1}(s_i)=\sigma_1^*s_{2-i}; 
\cA_{\sigma}(s_i)=\sigma^*	s_{2-i}; \quad i =1,2
\]This equivariant structure satisfies the condition that $Li^*_{\wt Y}\cA_{\sigma_1} =\Id$ vacuously, since ${\wt Y}$ is empty. Indeed $\wt M$ descends to $M=k[x]/(x)$ on $\Spec k[x]$.

There is no $E$-elliptic module whose pullback is $\wt M$: its underlying sheaf on $\P^1$ would have to be $M=k[x]/(x)$. However, $\pi_1^*M \cong k[x,y]/((y-qx)(y-q^{-1}x),x) \cong  k[x,y]/(y^2,x) $ is not isomorphic to $\pi_2^*M\cong  k[x,y]/(y,x^2)$. Therefore, $M$ supports no elliptic module structure.
\end{remark}

\begin{remark}\label{rem:flatnessIsNeeded-notFull}
The functor from $E$-elliptic modules to $G$-equivariant sheaves on $\wt E$ constructed above from $\pi^*\pi_1^*$ is defined for any elliptic module, without the flatness assumption. The functor defined this way on the whole category of elliptic modules is faithful, but not full in general. Consider two elliptic modules $M$ and $N$, with their corresponding elliptic structures which we will denote by $\cA$ in both cases.

Lemma~\ref{lem:z2-descent} ensures that $\pi^*\pi_1^*$ is a bijection between morphisms of sheaves from $M$ to $N$ and morphisms of $\Z/2\Z\langle \sigma_1\rangle$-equivariant sheaves from $\pi^*\pi_1^*M$ to $\pi^*\pi_1^*N$. Therefore, the map $\pi^*\pi_1^*:\Hom_{\EMod}(M,N)\to \Hom_G(\pi^*\pi_1^*M,\pi^*\pi_1^*N)$ is injective, since it is the restriction of the bijection $\pi^*\pi_1^*:\Hom_{\O_{\P^1}}(M,N)\overset{\sim}\to \linebreak \Hom_{\Z/2\Z}(\pi^*\pi_1^*M,\pi^*\pi_1^*N)$.

Let us now show by example that the functor is not full. Consider the curve $E$ from Remark~\ref{rem:flatnessIsNeeded}, with $q$ a primitive cubic root of unity, so an affine open set of $E$ is cut out by the equation $y^2+xy+x^2=0$. We will construct two nonisomorphic elliptic modules $M_1,M_2$ whose pullbacks to $\wt E$ are isomorphic. For both modules, the underlying sheaf is the module $k[x]/(x^3)$. Let $s_i$ be the generator for $M_i$. We define the elliptic module structures by
\[
\cA_{1}\pi_1^*s_1 = \pi_2^*s_1;\quad\cA_{2}\pi_1^*s_2 = (1+x^2y)\pi_2^*s_2.
\]
When pulled back to $\wt E$, they both take the form $\cA:\pi^*\pi_1^*s_i\mapsto \pi^*\pi_2^*s_i$, so they become isomorphic by mapping $s_1$ to $s_2$. However, there are no nonzero maps from the elliptic module $M_1$ to $M_2$. Such a map would take the form $f(s_1) = (a_0+a_1x+a_2x^2)s_2$. The relation $\cA_2 \circ \pi_1^*f = \pi_2^*f\circ \cA_1$ amounts to
\[(a_0+a_1x+a_2x^2+a_0x^2y)\pi_2^*s_2=(a_0+a_1x+a_2x^2) (1+x^2y)\pi_2^*s_2=\cA_2\left(
(a_0+a_1x+a_2x^2)\pi_1^*s_2
\right)=
\cA_2\left(\pi_1^* (f(s_1))\right) = \]\[
=\cA_2\left(\pi_1^* f(\pi_1^*s_1))\right)=\pi_2^*f \left( \cA_1(\pi_1^*s_1)\right)=
\pi_2^*f\left(
\pi_2^*s_1\right) = (a_0+a_1y+a_2y^2)\pi_2^*s_2.
\]
The only solution to the equation $a_0+a_1x+a_2x^2+a_0x^2y\equiv a_0+a_1y+a_2y^2\mod (y^2+xy+x^2,x^3)$ corresponds to the zero morphism.

\end{remark}

\begin{remark}\label{rem:possibleEllEq}
With Proposition~\ref{prop:EllipticModulesAreDihedral} in mind, it seems that there are several reasonable definitions for elliptic modules. One could consider the whole category of $G$-equivariant sheaves on $E$, which as explained in said Proposition contains $\EMod$ as a full subcategory. Alternatively, one could force $\sigma$ and $\sigma_1$ to play symmetric roles by requiring that $\cA_\sigma$ act as the identity on the fixed locus of $\sigma$, and considering this full subcategory of the one we are calling $\EMod$ in this paper.

Also notice that there are two very different behaviors depending on whether $\sigma_1\sigma$ has finite order. If $(\sigma_1\sigma)^n=\Id_E$, then the composition $(\ov {\sigma_1}\ov \sigma)^n$ is an automorphism of $\pi_1^*M$. An interesting full subcategory of elliptic modules is the full subcategory of modules for which this automorphism is the identity.  In other words, one might consider sheaves equivariant for a finite dihedral group, rather than the infinite dihedral group.


\end{remark}

\subsubsection{Application to elliptic equations}

In light of Proposition~\ref{prop:EllipticModulesAreDihedral}, we can apply Theorem~\ref{thm:fiberFinSt} to elliptic modules.

\begin{theorem}\label{thm:mainThm-elliptic}
Let $E$ and $G$ be as in Proposition~\ref{prop:EllipticModulesAreDihedral}. Let $p\in E$ be a closed point and let $E^*=E\setminus Gp$. For any scheme on which $\sigma_1$ acts, we will denote without ambiguity $Y$ as the fixed scheme of $\sigma_1$. Let $p
\in E$. Let $\Hol(E)^\circ$ (resp. $\Hol(E^*)^\circ$) be the full subcategory of $\Hol(E)$ (resp. $\Hol(E^*)$) consisting of modules for which $Li^*_Y\cA_{\sigma_1} = \Id$.

To define $\Mod(\pz)^\circ$, for every $g\in G$ we will let $Y_g=g^{-1}Y$ be the fixed scheme of $g^{-1}\sigma_1g$ intersected with the formal neighborhood of $p$, in particular $Y_g$ is empty unless $\sigma_1gp=gp$. Then, we let $\Mod(\pz)^\circ$ be the full subcategory of $\Mod(\pz)$ consisting of modules for which $Li^*_{Y_g} \cA_{g^{-1}\sigma_1g}=\Id$, for every $g\in G$. We are denoting the embedding of $Y_g$ into $\pz$ by $i_{Y_g}$.

Then the restriction of the functors $|_\pz$ and $|_\As$ induces an equivalence between $\Hol(E)^\circ$ and the fiber product $\Mod(\pz)^\circ\times_{\Mod(\pzz)} \Hol(E^*)^\circ$.
\end{theorem}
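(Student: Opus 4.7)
The plan is to deduce the statement from Theorem~\ref{thm:fiberFinSt} applied to the curve $E$ with its $G$-action, by verifying that the equivalence $\Hol(E)\cong \Mod(\pz)\times_{\Mod(\pzz)} \Hol(E^*)$ established there restricts to an equivalence between the corresponding $\circ$-subcategories. In view of Proposition~\ref{prop:EllipticModulesAreDihedral}, this gives the desired statement about elliptic modules.

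The central observation is that the condition $Li^*_Y\cA_{\sigma_1}=\Id$ is local on $E$: the support of $Y$ consists of the closed points fixed by $\sigma_1$, and the condition at each such point depends only on the equivariant data restricted to the corresponding formal neighborhood. The closed points of $Y$ split disjointly into those in $Gp$ and those in $E^*$. The second piece is captured precisely by the condition defining $\Hol(E^*)^\circ$. For a point $q=gp\in Gp\cap Y$, the $G$-equivariant automorphism $\ov g$ identifies the formal neighborhood $U_q$ of $q$ with $\pz$, sending the fixed scheme of $\sigma_1$ at $q$ to $Y_g$ and identifying $\cA_{\sigma_1}|_{U_q}$ with $\cA_{g^{-1}\sigma_1 g}|_{\pz}$. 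Note that $q\in Gp$ is fixed by $\sigma_1$ iff $g^{-1}\sigma_1 g\in\St_p$, in which case $Y_g$ is the fixed scheme of this element in $\pz$; otherwise $Y_g$ is empty and the corresponding condition is vacuous. Consequently, the collection of conditions at points of $Gp\cap Y$ is exactly the condition defining $\Mod(\pz)^\circ$; indexing over all $g\in G$ rather than a set of coset representatives produces only harmless redundancy, since $g$ and $gh$ for $h\in\St_p$ yield the same point $gp$ and, by the cocycle relation $\ov{gh}=\ov g\ov h$, equivalent conditions.

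With this translation in hand, I would verify both directions of the restricted equivalence. Forward: if $M\in\Hol(E)^\circ$, then $\Phi(M)=(M|_\pz,M|_{E^*})$ lies in $\Mod(\pz)^\circ\times_{\Mod(\pzz)}\Hol(E^*)^\circ$ by restricting the local condition to $\pz$ and to $E^*$. Reverse: given $(M_\pz,M_{E^*})$ in the fiber product with both components in the $\circ$-subcategories, its image under the inverse $\Psi$ constructed in Section~\ref{sec:proof} restricts to $M_\pz$ on $\pz$ and to $M_{E^*}$ on $E^*$ by Theorem~\ref{thm:fiberFinSt}; transporting by the $G$-action covers the formal neighborhood of every point of $Y$, and so by locality the resulting sheaf lies in $\Hol(E)^\circ$.

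The main obstacle will be the precise commutation of the derived functor $Li^*_Y$ with the formal completion at $p$. The commutation with the open restriction to $E^*$ is an instance of flat base change. For the formal completion, one must check that computing $Li^*_Y\cA_{\sigma_1}$ by the equivariant free resolution techniques from the proof of Lemma~\ref{lem:z2-descent} is compatible with the flat map $\pz\to E$; this follows because $\O_\pz$ is flat over the stalk $\O_{E,p}$, and because $Y$ is set-theoretically supported on closed points, so all relevant derived restrictions are already determined by the formal-local data.
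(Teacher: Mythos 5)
Your proposal is correct and follows essentially the same strategy as the paper: apply Theorem~\ref{thm:fiberFinSt}, observe that the condition $Li^*_Y\cA_{\sigma_1}=\Id$ is detected point-by-point on the fixed locus, and use the $G$-action (via the cocycle relation for the maps $\cA_g$) to transport the condition from $\pz$ to all other points of $Gp\cap Y$. The paper's proof makes the cocycle computation $Li_g^*\cA_{g^{-1}\sigma_1g}=Li_1^*\bigl(\sigma_1^*\cA_{g^{-1}}\circ\cA_{\sigma_1}\circ(g^{-1})^*\cA_g\bigr)$ explicit and phrases the locality via exactness of $|_\pz$ and $|_{E^*}$ applied to the image of $Li^*_Y\cA_{\sigma_1}-\Id$, which is the precise version of your ``commutes with flat base change'' observation at the end.
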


\begin{proof}
Clearly $|_\As$ maps $\Hol(E)^\circ$ into $\Mod(E^*)^\circ$. Also, $|_\pz$ maps $\Hol(E)^\circ$ into $\Mod(\pz)^\circ$: if $Li^*_Y\cA_{\sigma_1}=\Id$, then we use the following identity, which comes from applying Definition~\ref{def:equivariant} and the discussion thereafter:
\[
Li_g^*\cA_{g^{-1}\sigma_1g}=Li_1^* (g^{-1})^*\left(g^*\cA_{g^{-1}\sigma_1} \circ \cA_g \right)=
Li_1^*\left(\cA_{g^{-1}\sigma_1}\circ (g^{-1})^*\cA_g \right)=
Li_1^*\left(\sigma_1^*\cA_{g^{-1}}\circ \cA_{\sigma_1}\circ (g^{-1})^*\cA_g \right).
\]
Therefore, if $Li_1^*\cA_{\sigma_1}=\Id$, we have that
\[
Li_g^*\cA_{g^{-1}\sigma_1g}=Li_1^*\left(\sigma_1^*\cA_{g^{-1}}\circ (g^{-1})^*\cA_g \right) \overset{\sigma_1\circ i_1=i_1}= Li_1^*\left(\cA_{g^{-1}}\circ (g^{-1})^*\cA_g \right) = Li_1^*\cA_1 = \Id.
\]

Applying Theorem~\ref{thm:fiberFinSt}, $\Hol(E)\cong \Mod(\pz)\times_{\Mod(\pzz)} \Hol(E^*)$ contains $\Mod(\pz)^\circ\\\times_{\Mod(\pzz)} \Hol(E^*)^\circ$ as a full subcategory, which itself contains $\Hol(E)^\circ$ by the discussion above. It only remains to prove that $\Hol(E)^\circ\supseteq \Mod(\pz)^\circ\times_{\Mod(\pzz)} \Hol(E^*)^\circ$. Since we are dealing with full subcategories, we only need to check the containment of objects: we need to prove that for $M\in \Hol(E)$, if $Li^*_Y\cA_{\sigma_1}$ acts as the identity on both $Li^*_YM|_{E^*}$ and $Li^*_{Y_g}M|_\pz$ for every $g$, then $Li^*_Y\cA_{\sigma_1}$ is the identity on $Li^*_YM$ as well.

Let us show this: Let $K$ be the image of $Li^*_Y\cA_{\sigma_1}-\Id$. Since $(Li^*_Y\cA_{\sigma_1}-\Id)|_{E^*}=0$ and $|_{E^*}$ is an exact functor, $K$ is supported away from $E^*$ i.e. on $Gp$. Since $|_\pz$ is an exact functor, we also have that the formal fiber $K_p$ vanishes. We are left with the points $gp$ in the orbit of $p$. If $gp$ is fixed by $\sigma_1$, we use the equation above: $\Id=Li_g^*\cA_{g^{-1}\sigma_1g} = Li_1^*\left(\cA_{g^{-1}}\circ \cA_{\sigma_1}\circ (g^{-1})^*\cA_g\right)$, which implies that $Li_1^*\cA_{\sigma_1}=\Id$ at the stalk around $gp$ as well. Therefore, all the stalks of $K$ vanish, so indeed $M\in \Hol(E)^\circ$.
\end{proof}

\subsection{Relation to difference and differential equations on the line}\label{sec:elliptic-discrete}

Elliptic equations generalize discrete equations such as difference equations, i.e. sheaves equivariant under $z\mapsto z+1$, and $q$-equations, i.e. sheaves equivariant under $z\mapsto qz$, where $q\in k^\times$ is fixed (note that up to a change of coordinates on $\P^1$ these are all the automorphisms). This happens when the curve $E$ is reducible, in which case its components have degree $(1,1)$ (since they are not allowed to be fibers), and therefore each component is the graph $\Gamma_\tau $ of an automorphism $\tau$ of $\P^1$. Since $E$ is preserved by interchanging the coordinates there are two possibilities: either the components are interchanged, in which case they are the graph of an automorphism $\tau$ and its inverse (which must be different from $\tau$, so $\tau^2\neq 1$); or they are both preserved, in which case we have the graphs of two different involutions, one of which could possibly be the identity.

In the case where $E=\Gamma_\tau\cup \Gamma_{\tau^{-1}}$, elliptic equations are strongly related to $\tau$-equivariant sheaves on $\P^1$, which are difference equations if $\tau$ is $z\mapsto z+1$ and $q$-equations if $\tau$ is $z\mapsto qz$ (note that these are the only possibilities up to a change of coordinates). Away from the fixed points of $\tau$, the notions of an $E$-elliptic module and a $\Z\langle\tau\rangle$-equivariant sheaf are equivalent, and this equivalence can be extended over the special points for flat sheaves, as Proposition~\ref{prop:EllipticToDifference} shows.




Notice that the fixed \textbf{geometric} points of $\tau$ are the images of the singular geometric points of $\Gamma_{\tau}\cup \Gamma_{\tau^{-1}}$. In the situation where $E=\Gamma_{\tau_1}\cup \Gamma_{\tau_2}$, the singular geometric points are the preimages of the points $p$ for which $\tau_1p=\tau_2p$, or equivalently fixed geometric points of $\tau_1\tau_2$.

\begin{proposition}\label{prop:EllipticToDifference}
Let $k$ be perfect and not of characteristic $2$. Suppose $\tau\in \Aut(\P^1)$ is such that $\tau^2\neq 1$. Let $E = \Gamma_{\tau}\cup \Gamma_{\tau^{-1}}$ and let $Z$ be the fixed scheme of $\tau$.  Then the category of $\Z\langle\tau\rangle$-equivariant sheaves on $\P^1$ is equivalent to the category of equivariant sheaves on the curve $\wt E=\Gamma_{\tau}\sqcup \Gamma_{\tau^{-1}}$. Therefore, the following categories are equivalent.
\begin{enumerate}
\item $\tau$-equivariant sheaves on $\P^1$ which are flat at $Z$.
\item $E$-elliptic modules on the curve $E=\Gamma_{\tau}\cup \Gamma_{\tau^{-1}}$ which are flat at $Z$.
\end{enumerate}

Suppose we are given $\tau_1\neq \tau_2\in \Aut(\P^1)$ such that $\tau_j^2=\Id$, and $E=\Gamma_{\tau_1}\cup \Gamma_{\tau_2}$. Let $\wt G$ be the infinite dihedral group generated by $\tau_1$ and $\tau_2$, acting on $\P^1$ (the action is not necessarily faithful, for example if $\tau_1=\Id$). Let $Z$ be the fixed scheme of $\tau_1\tau_2$. Then the category of $\wt G$-equivariant sheaves on $\P^1$ is equivalent to the category of equivariant sheaves on the curve $\wt E=\Gamma_{\tau_1}\sqcup \Gamma_{\tau_2}$. Therefore, the following categories are equivalent.
\begin{enumerate}
\item $\wt G$-equivariant sheaves on $\P^1$ which are flat at $Z$.
\item $E$-elliptic modules on the curve $E=\Gamma_{\tau_1}\cup \Gamma_{\tau_2}$ which are flat at $Z$.
\end{enumerate}
\end{proposition}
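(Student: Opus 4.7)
The plan is to reduce both statements to Proposition~\ref{prop:FlatEllipticModulesAreDihedral-normalization} together with a direct analysis of the $G$-action on the normalization $\wt E$. In both cases, $\wt E = \Gamma_{\tau_1}\sqcup \Gamma_{\tau_2}$ (with $\tau_2 = \tau^{-1}$ in case 1) is a disjoint union of two copies of $\P^1$, each identified with $\P^1$ via $\pi_1$.

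First I would make the $G$-action explicit. The deck involution $\sigma_1$ swaps $(x,\tau_1 x)$ and $(x,\tau_2 x)$, so it exchanges the two components as the identity under the $\pi_1$-identifications; in particular its fixed scheme $\wt Y$ is empty and the condition $Li_{\wt Y}^*\cA_{\sigma_1}=\Id$ appearing in Proposition~\ref{prop:FlatEllipticModulesAreDihedral-normalization} is vacuous. In case 1, the involution $\sigma$ also swaps the two components, sending $x\in\Gamma_\tau$ to $\tau x\in\Gamma_{\tau^{-1}}$, so the composition $\sigma_1\sigma$ preserves each component and acts on $\Gamma_\tau\cong\P^1$ as $\tau$. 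In case 2, $\sigma$ preserves each component and acts as $\tau_i$ on $\Gamma_{\tau_i}$, using the involutivity $\tau_i^2=1$.

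The key step is to prove that $G$-equivariant sheaves on $\wt E$ are equivalent to $\Z\langle\tau\rangle$-equivariant sheaves on $\P^1$ in case 1, and to $\wt G$-equivariant sheaves on $\P^1$ in case 2. Applying Lemma~\ref{lem:z2-descent} to the degree-$2$ finite flat map $\pi_1:\wt E\to\P^1$ (whose fixed locus is empty, so that the condition on $Li^*$ is vacuous), I obtain an equivalence $\pi_1^*:\QCoh(\P^1)\overset{\sim}{\to}\QCoh(\wt E)^{\langle\sigma_1\rangle}$. Under this equivalence the additional $\sigma$-equivariant structure on $\wt E$ descends to equivariance data on $\P^1$: in case 2, restricting $\cA_\sigma$ to the component $\Gamma_{\tau_i}$ gives an involutive isomorphism $M\to \tau_i^*M$, producing the two generators of the $\wt G$-action; in case 1, restricting $\cA_\sigma$ to $\Gamma_\tau$ gives an isomorphism $M\to \tau^*M$, with the datum on the other component forced by $\sigma_1$-equivariance. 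Since the only relations in $G$ are $\sigma^2=\sigma_1^2=1$, and $\Z$ is free, no further cocycle conditions arise on the $\P^1$ side beyond those inherent to the $\Z\langle\tau\rangle$-  or $\wt G$-equivariant structure, and functoriality is straightforward.

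Finally, flatness at $\pi^{-1}(Z)\subset\wt E$ is equivalent to flatness at $Z\subset\P^1$, since $\pi_1$ restricts to an isomorphism on each component of $\wt E$. Combining this with Proposition~\ref{prop:FlatEllipticModulesAreDihedral-normalization} then yields both claimed equivalences for flat objects. The main obstacle will be the bookkeeping in case 1, where $\sigma$ mixes the two components with a nontrivial twist by $\tau$; once the identifications between the components and $\P^1$ are pinned down carefully, everything matches cleanly because the infinite dihedral group imposes no relation beyond $\sigma^2=\sigma_1^2=1$ and the group $\Z\langle\tau\rangle$ is free on $\tau$.
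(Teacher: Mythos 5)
Your proposal is correct and essentially reproduces the paper's argument: invoke Proposition~\ref{prop:FlatEllipticModulesAreDihedral-normalization} with $\wt Y=\emptyset$, so the $Li^*_{\wt Y}$ condition is vacuous, then build an equivalence between $G$-equivariant sheaves on $\wt E=\Gamma_{\tau_1}\sqcup\Gamma_{\tau_2}$ and $\Z\langle\tau\rangle$- (resp.~$\wt G$-) equivariant sheaves on $\P^1$ via Lemma~\ref{lem:z2-descent} and the observation that $\sigma$ swaps (resp.~preserves) the components. The only thing to tighten is the descent of the $\sigma$-structure in case 1: since $\sigma$ maps $\Gamma_\tau$ to the other component, the map $M\to\tau^*M$ is really obtained from the composite $\sigma^*\cA_{\sigma_1}\circ\cA_\sigma=\cA_{\sigma_1\sigma}$ restricted to $\Gamma_\tau$, not from $\cA_\sigma$ alone — precisely because, as you note, $\sigma_1\sigma$ is what preserves $\Gamma_\tau$ and acts there as $\tau$.
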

\begin{proof}

Applying Proposition~\ref{prop:FlatEllipticModulesAreDihedral-normalization}, we have that elliptic modules which are flat at the singular points are equivalent to modules equivariant for the action of the dihedral group, and which are flat at the preimages of these singular points. The condition that $Li_{\wt Y}^*\cA_{\sigma_1}=\Id$ doesn't come into play, because in this case $\sigma_1$ acts freely on $\Gamma_\tau \sqcup \Gamma_{\tau^{-1}}$, since it interchanges the two components.

It remains to check that $G$-equivariant sheaves on $\Gamma_\tau\sqcup \Gamma_{\tau^{-1}}$ (which are flat at $\pi^{-1}(Z)$) are equivalent to $\tau$-equivariant sheaves on $\P^1$ (which are flat at $Z$). Given such an equivariant sheaf $M$ on $\P^1$, we may pull it back by the projection $\pi_1:\Gamma_\tau\sqcup \Gamma_{\tau^{-1}}\to \P^1$, and it automatically becomes $\frac{\Z}{2\Z}\langle \sigma_1\rangle$-equivariant (Lemma~\ref{lem:z2-descent}). The action of $\sigma$ is given by $\sigma|_{\Gamma_{\tau^{\pm 1}}} = \sigma_1\circ \tau^{\pm 1}:\Gamma_{\tau^{\pm 1}}\to \Gamma_{\tau^{\mp 1}}$, and therefore we must define
\[
\cA_{\sigma} = (\tau^{\pm 1})^*\cA_{\sigma_1} \circ \cA_{\tau^{\pm 1}} \quad \text{on }\Gamma_{\tau^{\pm 1}} .
\]
It is straightforward to check that indeed $\sigma^*\cA_{\sigma}\circ \cA_\sigma=\Id$, so $\pi_1^*M$ is $G$-equivariant. If $M$ is flat at $Z$, then $\pi_1^*M$ is flat at $\pi^{-1}(Z)$. Going back, if we start with $N$ on $\Gamma_\tau\sqcup \Gamma_{\tau^{-1}}$ which is $G$-equivariant, we can get a sheaf on $\P^1$ by taking $M=\pi_1{}_*(N|_{\Gamma_\tau})$. Then on $M$ we let $\cA_\tau = (\sigma^*\cA_{\sigma_1}\circ \cA_\sigma)|_{\Gamma_\tau}$. If $N$ is flat at $\pi^{-1}(Z)$, then $M$ is flat at $Z$. It is straightforward to check that these constructions are mutually inverse.

In the second situation, we proceed analogously: we must show that $G$-equivariant sheaves on $\Gamma_{\tau_1}\sqcup \Gamma_{\tau_2}$ are equivalent to $\wt G$-equivariant sheaves on $\P^1$, and that the flatness condition is preserved. As above, given a $\wt G$-equivariant sheaf $M$ on $\P^1$, we consider $\pi_1^*M$ as a $\Z/2\Z\langle \sigma_1\rangle$-equivariant sheaf. This time, the action of $\sigma$ on $\Gamma_{\tau_i}$ equals $\tau_i$, so we define $\cA_\sigma = \cA_{\tau_i}$ on $\Gamma_{\tau_i}$. As before, $\pi_1^*M$ becomes $G$-equivariant and it is flat at the fixed points of $\tau_2\tau_1$ if $\wt M$ was. The inverse of this functor is given as follows: starting with an equivariant sheaf $N$ on $\Gamma_{\tau_1}\sqcup \Gamma_{\tau_2}$, we let $M=\pi_1{}_*(N|_{\tau_1})$. The $\wt G$-equivariant structure is given by $\cA_{\tau_1} = \cA_\sigma|_{\Gamma_{\tau_1}}$, and $\ov{\tau_2}= \ov{\sigma_1}\circ\ov{\sigma}\circ\ov{\sigma_1}$, since the analogous relations hold for the action of $\wt G$ on $\P^1$. Again, flatness at the specified points is preserved and one can check that the constructions are mutual inverses.

\end{proof}

Recall that the flatness condition cannot be completely removed, as the example in Remark~\ref{rem:flatnessIsNeeded} shows.

Further, if the components of $E$ coincide so that $E$ becomes the double diagonal, then $E$-elliptic modules become strongly related to $D$-modules on $\P^1$. This is very similar to Grothendieck's definition of a connection, see \cite[I \S2]{deligne}.


\begin{proposition}\label{prop:elliptic-Dmod}
Let $\tau\in \Aut(\P^1)$ be such that $\tau^2=\Id$. Let $I$ be the ideal sheaf of the graph of $\tau$ in $\P^1\times \P^1$, and let $E$ be the subscheme cut out by $I^2$ and let $\Delta$ be the diagonal. Then $E$-elliptic modules are equivalent to the following:
\begin{itemize}
\item If $\tau=\Id$, elliptic modules are equivalent to ordered pairs of $D$-modules on $\P^1$, i.e. $\EMod\cong \DMod(\P^1) \oplus \DMod(\P^1)$. The full subcategory of elliptic modules such that $\cA|_\Delta=\Id$ is equivalent to $\DMod(\P^1)$.
\item If $\tau\neq \Id$, elliptic modules are equivalent to quasicoherent sheaves $M$ on $\P^1$ with two structures:
\begin{itemize}
\item A $\Z/2\Z$-equivariant structure $\cA_\tau:M\to \tau^*M$.
\item A connection $\nabla:M\to \Omega\otimes M$.
\end{itemize}
These two structures are compatible in the sense that $\tau^*\nabla\circ \cA_\tau = (\Id_\Omega\otimes \cA_\tau)\circ \nabla$. In other words, given $m\in M$, if we let $\nabla m=\sum \alpha_i\otimes m\in \Omega\otimes M$, then
we have that
\[
\nabla (\ov\tau m) =\sum \tau^*\alpha_i \otimes \ov\tau  m_i
\]
\end{itemize}
\end{proposition}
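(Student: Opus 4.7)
The plan is to reduce the statement to a local computation on $\P^1$ and interpret the resulting algebraic data as a variant of Grothendieck's description of connections via the first infinitesimal neighborhood of the diagonal (cf.\ \cite[I \S2]{deligne}). I would fix an affine chart of $\P^1$ with coordinate $z$ on which $\tau$ and its derivative are regular. Locally $\O_E = \O_{\P^1}[\epsilon]/\epsilon^2$ with $\epsilon = s - \tau(t)$, and both $\pi_1^*M$ and $\pi_2^*M$ can be identified with $M \oplus M\epsilon$ as abelian sheaves, with different $\O_E$-module structures: on $\pi_1^*M$ the coordinate $t$ acts as $z$ on $M$, while on $\pi_2^*M$ the relation $t = \tau(s) - \tau'(s)\epsilon$ in $\O_E$ forces $t$ to act with a first-order correction involving $\tau'$.

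In this description any $\O_E$-linear map $\cA \colon \pi_1^*M \to \pi_2^*M$ is of the form $\cA(m + n\epsilon) = A(m) + (B(m) + A(n))\epsilon$ for two $k$-linear maps $A, B \colon M \to M$. The $\O_E$-linearity forces $A(zm) = \tau(z) A(m)$, which identifies $A$ with the restriction $\cA_\tau \colon M \to \tau^*M$ of $\cA$ to $\Gamma_\tau$, together with a twisted Leibniz rule $B(zm) = \tau(z) B(m) - \tau'(z) A(m)$. A direct computation of $\sigma^*\cA$ (the only delicate point being $\sigma^*\epsilon = -\epsilon/\tau'(t)$) reduces the condition $\sigma^*\cA = \cA^{-1}$ to $A^2 = \Id$ (so $\cA_\tau$ is a $\Z/2\Z$-equivariant structure on $M$) together with the commutation $AB = BA$ in the case $\tau = \Id$, or the anticommutation $AB + BA = 0$ in the case $\tau \neq \Id$.

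When $\tau = \Id$, commutation means $B$ preserves the eigenspace splitting $M = M_+ \oplus M_-$ of the involution $A$, and the Leibniz rule makes $-B|_{M_+}$ and $B|_{M_-}$ into classical connections, yielding $\EMod \cong \DMod(\P^1) \oplus \DMod(\P^1)$; the subcategory $\cA|_\Delta = \Id$ forces $A = \Id$ hence $M_- = 0$, recovering $\DMod(\P^1)$ (here the symmetry condition becomes automatic, which is Grothendieck's characterization). When $\tau \neq \Id$, the first-order datum $B$ can be repackaged as a classical connection $\nabla \colon M \to \Omega \otimes M$ by twisting with $\cA_\tau^{-1}$ and identifying the resulting 1-form with $d\tau$ rather than $dz$; the factor $\tau'(z)$ in the twisted Leibniz rule cancels precisely with this identification, producing the ordinary Leibniz rule $\nabla(fm) = f\nabla(m) + df \otimes m$, and the anticommutation of $A$ and $B$ becomes the stated compatibility $\tau^*\nabla \circ \cA_\tau = (\Id_\Omega \otimes \cA_\tau) \circ \nabla$ under the natural identification $\tau^*\Omega \cong \Omega$ given by $d\tau$.

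Since $\cA_\tau$, the pair of connections in Case 1, and the connection $\nabla$ in Case 2 are all intrinsic, the local equivalence patches to a global one, and the correspondence on morphisms is automatic (a map of elliptic modules is the same as a map of sheaves intertwining the restrictions to $\Gamma_\tau$ and the first-order data). The main obstacle is the bookkeeping in Case 2: verifying that the proposed $\nabla$ really satisfies the classical Leibniz rule (the $\tau'$ factors arising from the expansion of $t$ in $\pi_2^*M$ must conspire with the $\tau$-semilinearity of $A$) and that the anticommutation condition extracted from $\sigma^*\cA = \cA^{-1}$ matches the stated compatibility in a coordinate-invariant way, i.e.\ that using $d\tau$ rather than $dz$ in the target of $\nabla$ is forced by globalization.
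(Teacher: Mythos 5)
Your approach is genuinely different from the paper's. You work in an explicit local coordinate and describe $\cA$ by a pair of operators $(A,B)$, analogous to the classical Grothendieck description of a connection. The paper instead works coordinate-free: for $\tau=\Id$ it decomposes $M$ into eigenspaces of $\cA|_\Delta$ and shows each eigenspace is an $E$-submodule, and for $\tau\neq\Id$ it constructs $\nabla$ by composing $\cA$ with the first-jet map $J_{\tau^*M}$ via the adjunction $\pi_1^*\dashv\pi_{1*}$. The trade-off is that the coordinate-free construction proves globalization for free but requires carefully juggling adjunctions, whereas your approach gives you explicit formulas at the cost of a coordinate-independence check at the end. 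For $\tau=\Id$ the two approaches are essentially the same computation dressed differently.

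For $\tau\neq\Id$ there is a genuine error in the statement of the constraint you extract from $\sigma^*\cA=\cA^{-1}$. Writing $\cA(m+n\epsilon)=A(m)+(B(m)+A(n))\epsilon$ and $\sigma^*\epsilon=-\epsilon/\tau'(t)$, the condition $\sigma^*\cA\circ\cA=\Id$ gives $A^2=\Id$ together with $\tau'(z)\,AB = BA$, not $AB+BA=0$. The anticommutation you state holds only when $\tau'\equiv -1$, which requires choosing a coordinate in which $\tau(z)=-z$ (available, e.g., over an algebraically closed field for $\tau\neq\Id$, but not intrinsic). Working with the correct relation $\tau'AB=BA$ is what actually matches the compatibility $\tau^*\nabla\circ\cA_\tau=(\Id_\Omega\otimes\cA_\tau)\circ\nabla$ after you set $\nabla = -A^{-1}B\,d\tau$ (the $\tau'$ factors cancel against the $d\tau$ twist, exactly as you anticipate in your last paragraph). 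Relatedly, a subtlety you should be careful with: the Leibniz-type rule you derive is $B(fm)=(f\circ\tau)B(m)-(f\circ\tau)'A(m)$, and substituting $f\mapsto f\circ\tau$ into this is not the same as substituting $z\mapsto\tau(z)$ into $B(zm)=\tau(z)B(m)-\tau'(z)A(m)$, because the derivative term changes by the chain rule; neglecting this makes the $t$- and $s$-linearity constraints look inconsistent for $\tau\neq\Id$.

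Finally, the globalization step and the verification that $\nabla$ really satisfies the Leibniz rule independently of the chart are the substantive parts of the argument, and you have flagged them as "the main obstacle" without carrying them out. So the plan is plausible but incomplete in exactly the place you identify; with the corrected anticommutation relation and the chain-rule point above, I believe it can be made to work, but as written it is not yet a proof for the $\tau\neq\Id$ case.
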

\begin{proof}

Let $\tau=1$ and consider an elliptic module $M$. Consider $\cA |_\Delta$, where $\Delta$ is the diagonal: since $\sigma|_\Delta = \Id$, $\cA|_\Delta$ is an endomorphism of $M$ whose square is the identity. $M$ then decomposes as the direct sum of its eigenspaces $M_{1}\oplus M_{-1}$. First of all, we claim that $M_{\pm 1}$ are $E$-submodules.

Consider the restriction $\cA:\pi_1^* M_1 \to \pi_2^* M_1\oplus \pi_2^* M_{-1}$. This map becomes the identity when restricted to $\Delta$, the diagonal, so its image is contained in $\pi_2^* M_1\oplus I_\Delta \pi_2^* M_{-1}$, where $I_\Delta$ is the ideal sheaf cutting out the diagonal. Consider a local section $m\in M_1$, and let $\cA (\pi_1^*m) = \pi_2^* m + m_1 + m_{-1}$, where $m_1\in I_\Delta\pi_2^* M_1$ and $m_{-1}\in I_\Delta\pi_2^*M_{-1}$. Notice that $\ov \sigma = \sigma^*\circ \cA$ acts as $\mp 1$ on $I_\Delta \pi_1^*M_{\pm 1}$: this sheaf is generated by elements of the form $(\pi_1^*f-\pi_2^*f)\pi_1^*n$, for $f\in \O_{\P^1}$ and $n\in M_{\pm 1}$. We have that
\begin{align}\label{eq:connection}
\begin{split}
\sigma^*\left(\cA((\pi_1^*f-\pi_2^*f)\pi_1^*n) \right) = 
\sigma^*\left((\pi_1^*f-\pi_2^*f)\cA(\pi_1^*n) \right) \in\sigma^*\left((\pi_1^*f-\pi_2^*f)(\pm\pi_2^*n+I_\Delta\pi_2^*M) \right) \overset{\mod I_\Delta^2}\equiv\\
\equiv  \sigma^*\left((\pi_1^*f-\pi_2^*f)(\pm\pi_2^*n) \right) = (\pi_2^*f-\pi_1^*f)(\pm \pi_1^*n) = \mp(\pi_1^*f-\pi_2^*f)\pi_1^*n
\end{split}
\end{align}
Now, given $\cA (\pi_1^*m) = \pi_2^* m + m_1 + m_{-1}$, let us write the equation $\sigma^*\cA (\cA(\pi_1^*m))=\pi_1^*m$:
\[
\pi_1^*m = (\sigma^*\cA)(\cA(\pi_1^*m)) = (\sigma^*\cA)(\pi_2^*m+m_1 + m_{-1}) =(\sigma^*\circ \cA\circ \sigma^*)(\pi_2^*m+m_1 + m_{-1}) \]\[= \sigma^* ( \cA(\pi_1^*m)) +\sigma^*(\cA(\sigma^* m_1+ \sigma^*m_{-1} ))= \sigma^* (\pi_2^*m + m_1+m_{-1} ) -\sigma^*m_1+\sigma^*m_{-1} = \pi_1^* m- 2m_{-1} 
.\]
This implies that $m_{-1}=0$, so $M_1$ is an $E$-submodule. The same computation shows that $M_{-1}$ is also an $E$-submodule.

The action of $\cA$ on $M_1$ is the same as a connection by the definition of Grothendieck, see for example \cite{deligne}. Similarly, the action of $-\cA$ on $M_{-1}$ is a connection. Therefore, elliptic modules consist of the direct sum of two $D$-modules. If we impose the condition that $\cA|_\Delta=\Id$, then $M_{-1}=0$, so we just obtain one $D$-module.

Now suppose that $\tau:\P^1\to \P^1$ is an involution, and let $E$ be the doubled graph of $\tau$, with its two projections $\pi_1,\pi_2:E\to \P^1$. Consider an $E$-elliptic module $M$. Let us write the second projection as $\pi_2=\tau\circ \pi_3$, so we have that $\pi_1\circ \sigma=\tau\circ\pi_3$. The elliptic module structure is an isomorphism $\cA:\pi_1^*M\to \pi_3^*\tau^*M$, such that $\sigma^*\cA = \cA^{-1}$. If we embed $E$ in $\P^1\times \P^1$ by $(\pi_1,\pi_3)$, it becomes the double diagonal. Let $\Delta$ be the diagonal of $\P^1\times \P^1$, embedded by $(\pi_1,\pi_3)$. Consider $\cA|_\Delta:M = \pi_1^*M |_\Delta\to  \pi_3^*\tau^* M|_\Delta = \tau^*M$, which gives $M$ a $\tau$-equivariant structure. Since $\sigma^*\cA\circ \cA = 1$, $M$ is $\Z/2\Z$-equivariant, and not just $\Z$-equivariant. Let $\cA_\tau = \cA|_\Delta:M\to \tau^*M$.

Consider the adjunction map $J_{\tau^*M}:\tau^*M\to \pi_3{}_*\pi_3^*\tau^*M$. Since $\pi_3{}_*\pi_3^*\tau^*M=\pi_1{}_*\pi_3^*\tau^*M$ as sheaves of groups, $J_{\tau^*M}$ can be seen as a $k$-linear map to $\pi_1{}_*\pi_3^*\tau^*M$. This is the map that to a section assigns its first order jet, and analogously we have $J=J_M:M\to\pi_1{}_*\pi_3^*M$. We define the following composition $\nabla$:
\[
\begin{tikzcd}[column sep = 5 em, ampersand replacement = \&]
-\nabla:M \arrow[r,"\cA- J_{\tau^*M}\circ \cA_\tau"]\& \pi_1{}_*\pi_3^*\tau^*M\arrow[r,"\pi_1{}_*\pi_3^*\tau^*\cA_\tau"]\& \pi_1{}_*\pi_3^*M.
\end{tikzcd}
\]

So we have that $\cA = J_{\tau^*M}\circ \cA_\tau-(\pi_1{}_*\pi_3^*\tau^*\cA_\tau)^{-1}\circ \nabla = J_{\tau^*M}\circ \cA_\tau-\pi_1{}_*\pi_3^*\cA_\tau\circ \nabla$ (we implicitly identify $\cA$ with $\pi_1{}_*\cA|_M$). Notice now that $J_{\tau^*M}\circ \cA_\tau = \pi_1{}_*\pi_3^* \cA_\tau \circ J$: both are equal as maps $M\to \pi_3{}_*\pi_3^*\tau^*M$ due to the adjunction relation, and $\pi_1{}_*\pi_3^*\tau^*M=\pi_3{}_*\pi_3^*\tau^*M$ as sheaves of groups. Therefore,
\[
\pi_1{}_*\cA|_M = \pi_1{}_*\pi_3^* \cA_\tau\circ (J - \nabla).
\]
Let us call $D:\pi_1^*M\to \pi_3^*M$ the map obtained from $J-\nabla$ from the adjunction $\pi_1^*\vdash \pi_1{}_*$. We obtain the relation $
\cA = \pi_3^* \cA_\tau\circ D
$. Now, $D=\pi_3^* \tau^*\cA_\tau\circ \cA$ is $\O$-linear, and further $D|_\Delta=\tau^*\cA_\tau\circ \cA_\tau=\Id$. Therefore, $\nabla$ is a covariant derivative, i.e. a linear connection on $M$, again by the reasoning in \cite{deligne}: any such $\O$-linear map $D$ which restricts to the identity on $\Delta$ gives a linear connection $\nabla = J-D$.

It remains to check that $\tau^* \nabla \circ \cA_\tau=\pi_1{}_*\pi_3^*\cA_\tau\circ \nabla$. We can repeat the same reasoning from equation~(\ref{eq:connection}) to conclude that $(\sigma^*(\tau,\tau)^*)\circ D$ acts as $-1$ on $I_\Delta \pi_1^*M$. Note that $\sigma\circ(\tau,\tau) = (\tau,\tau)\circ \sigma$ is the map that interchanges $\pi_1$ with $\pi_3$, so in this case $\sigma\circ(\tau,\tau)$ plays the role of $\sigma$ above and $\pi_3$ plays the role of $\pi_2$. Let us abbreviate $(\tau,\tau)\circ \sigma=\wt\sigma$. Taking this into account, let us show that $\wt \sigma^*D=D^{-1}$: $\pi_1^*M$ is generated by elements of the form $\pi_1^*m\in \pi_1^*M$. Then $J(m)=\pi_3^*m$, by definition, and $\nabla m\in I_\Delta \pi_3^*M$, since $J\equiv \Id\mod I_\Delta$. Therefore, 
\[
(\wt\sigma^*D)\left(D(\pi_1^*m)\right)= (\wt\sigma^*\circ D\circ \wt\sigma^*)\left(
Jm-\nabla m\right) = (\wt\sigma^*\circ D\circ \wt\sigma^*)\left(
\pi_3^*(m)-\nabla m\right) = \]\[=\wt\sigma^*(D(\pi_1^*m))-\wt\sigma^*(D\wt\sigma^*\nabla m)=\wt\sigma^*(
\pi_3^*m-\nabla m)+\wt\sigma^*\nabla m = \pi_1^*m.
\]
Then the relation $\sigma^*\cA\circ \cA = \Id$ implies the following:
\begin{align*}
\sigma^*\cA = \cA^{-1}&\Rightarrow (\pi_3^* \cA_\tau\circ D)^{-1}  =  \cA^{-1} = \sigma^*\cA= \sigma^*(\pi_3^*\cA_\tau \circ D) = \\ 
&\phantom{\Rightarrow}= \pi_1^*\tau^*\cA_\tau \circ \sigma^*D 
\overset{(\tau,\tau)^*\sigma^*D=D^{-1}}
=\pi_1^*\cA_\tau^{-1} \circ (\tau,\tau)^*D^{-1} \\
 &\Rightarrow \pi_3^* \cA_\tau\circ D = (\tau,\tau)^*D\circ \pi_1^* \cA_\tau.
\end{align*}
The last equality, after applying $\pi_1{}_*$ and restricting to $M\subset \pi_1{}_*\pi_1^* M$, reads
\[
\pi_1{}_*\pi_3^* \cA_\tau\circ (J-\nabla) = \tau^*(J-\nabla)\circ \cA_\tau.
\]
Now we note that $
\pi_1{}_*\pi_3^*\cA_\tau\circ J = \tau^* J \circ \cA_\tau
$: we observe that $\pi_3^*\cA_\tau\circ J = J_{\tau^*M}\circ \cA_\tau$ as maps to $\pi_3{}_*\pi_3^*\tau^*M$, so they are equal after identifying the latter with $\pi_1{}_*\pi_3^*\tau^*M$. Together with the fact that $\tau^*J = J_{\tau^*M}$, the above equality follows. Therefore, $\pi_1{}_*\pi_3^*\cA_\tau \circ \nabla = \tau^*\nabla \circ \cA_\tau$.

The identification between $\Omega\otimes M$ and $\pi_1{}_*I_\Delta\pi_3^*M$ takes a generator of the form $df\otimes m$ and maps it to $\pi_1{}_*((\pi_3^*f-\pi_1^*f)\pi_3^*m)$. Therefore, for any morphism $\phi:M\to N$
\begin{equation}\label{eq:connection2}
(\pi_1{}_*\pi_3^*\phi)(df\otimes m)= \pi_1{}_*((\pi_3^*f-\pi_1^*f)(\pi_3^*\phi)(\pi_3^*m)=\pi_1{}_*((\pi_3^*f-\pi_1^*f)\pi_3^*(\phi m)=df\otimes \phi m.
\end{equation}
So applying this to $\phi=\cA_\tau$ we see that for an element $\sum \alpha_i\otimes m$ with $\alpha_i\in \Omega$, the action of $\pi_1{}_*\pi_3^*\cA_\tau$ by linearity is $\pi_1{}_*\pi_3^*\cA_\tau\left( \sum \alpha_i\otimes m\right) = \sum \alpha_i\otimes \cA_\tau m$.

Consider now a local section $m\in M$, and let $\nabla m = \sum \alpha_i\otimes m_i\in \Omega\otimes M$. We have that
\[\nabla (\tau^* (\cA_\tau m)) = \tau^* (\tau^*\nabla (\cA_\tau m)) = \tau^*\left( 
\pi_3^*\cA_\tau (\nabla m)
\right)= \tau^*\left( 
\sum \alpha_i \otimes \cA_\tau m_i
\right) =\sum \tau^*\alpha_i \otimes \tau^*(\cA_\tau m_i).\]
So $\nabla \circ \tau^*\circ \cA_\tau = \tau^*\circ (\Id_\Omega \otimes \cA_\tau)\circ \nabla$, or in other words, $\tau^*\nabla \circ \cA_\tau = (\Id_\Omega\otimes \cA_\tau)\circ \nabla$. This identity is our claim.

Let us go backwards: to construct an $E$-connection starting from $\nabla$ and $\cA_\tau$, one takes $D=J-\nabla$ as above, and $\cA = \pi_3^*\cA_\tau\circ D$; and the previous reasoning shows that it is indeed an $E$-connection if $\nabla$ and $\cA_\tau$ commute in the appropriate sense.

It remains to check that these two constructions are functors. In other words, given two sheaves $M, N$ each with $\cA,\cA_\tau,\nabla$ as above, we would like to show that a morphism $\phi:M\to N$ commutes with $\cA$ if and only if it commutes with $\cA_\tau$ and $\nabla$.

Suppose $\phi$ commutes with $\cA_\tau$ and $\nabla$, i.e. $\cA\tau\circ \phi=\tau^*\phi\circ \cA_\tau$ and $\nabla\circ \phi = (\Id_\Omega\otimes \phi)\circ \nabla$. The latter equation amounts to saying that $D\circ\pi_1^*\phi=\pi_3^*\phi\circ D$: indeed, $J:\pi_1^*m\mapsto \pi_3^*m$ commutes in this way with $\phi$, and further if $\nabla m=\sum \alpha_i\otimes m_i$, we can apply (\ref{eq:connection2}) again to conclude that
\[
\pi_3^*\phi(\nabla m)=\pi_3^*\phi\left(
\sum \alpha_i\otimes m_i
\right)= 
\sum \alpha_i\otimes \phi m_i= (\Id\otimes \phi)\nabla m.
\]
We have that $\pi_1{}_*(D\circ \pi_1^*\phi)|_M=(J-\nabla)\circ \phi=\pi_1{}_*\pi_3^*\phi\circ (J-\nabla)=\pi_1{}_*(\pi_3^*\phi\circ D)|_M$, so by the adjunction we have that $D\circ\pi_1^*\phi=\pi_3^*\phi\circ D$. Finally, we have the desired relation:
\[
\cA \circ \pi_1^*\phi = \pi_3^*\cA_\tau\circ D\circ \pi_1^*\phi = \pi_3^*\cA_\tau\circ \pi_3^*\phi \circ D=  \pi_3^*\tau^*\phi \circ( \pi_3^*\cA_\tau\circ D)= \pi_2^*\phi \circ \cA.
\]
Conversely, suppose $\phi$ is such that $\cA\circ \pi_1^*\phi = \pi_2^*\phi\circ \cA$. Taking this relation restricted to $\Delta$ we obtain the equation $\cA_\tau \circ \phi = \tau^*\phi = \cA_\tau$. Now we can proceed as above:
\[
\pi_3^*\cA_\tau\circ D\circ \pi_1^*\phi =\cA \circ \pi_1^*\phi= \pi_2^*\phi \circ \cA =   \pi_3^*\tau^*\phi \circ( \pi_3^*\cA_\tau\circ D)=\pi_3^*\cA_\tau\circ \pi_3^*\phi \circ D.
\]
We conclude that $D\circ \pi_1^*\phi = \pi_3^*\phi \circ D$, from which it follows that $\nabla \circ \phi=\pi_3^*\phi\circ \nabla$, by following the reasoning above.

\end{proof}

\section{Examples}\label{sec:examples}

We will compute the local type of some rank 1 elliptic modules and we will show how one can use the local type to classify modules. In particular, Lemma~\ref{lem:allTorsionFreeModules} shows how to describe all elliptic submodules of a given elliptic module that are generically equal, in particular showing that there is a smallest such module, which we may call the intermediate extension by analogy with the $D$-module case. Further, we can also describe the elliptic modules with torsion by describing the extension groups of an elliptic module by a torsion elliptic module.

Let $k$ be algebraically closed, and let $E$ be an elliptic curve with origin $O$. Let us use $+$ to denote $E$'s group law. Fix a nontorsion point $P_0\in E(k)$. Let $\sigma_1:E\to E$ be the map $x\mapsto -x$, and consider the projection $\pi_1:E\to E/\sigma_1 \cong \P^1$. Let $\sigma:E\to E$ be the involution $x\mapsto (P_0-x)$. Let $\pi_2 = \pi_1\circ \sigma$.

\begin{lemma}
As defined above, $(\pi_1,\pi_2):E\to \P^1\times \P^1$ is an embedding, which is necessarily symmetric: by construction, $(\pi_1,\pi_2)\circ \sigma = (\pi_2,\pi_1)$.
\end{lemma}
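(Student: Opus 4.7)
The plan is to verify that $(\pi_1,\pi_2)$ is a closed immersion by checking injectivity on points and injectivity on tangent vectors separately; the map is automatically proper since $E$ is projective, so these two facts will suffice.

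First I would check injectivity on geometric points. Suppose $x,y\in E(\bar k)$ have the same image. The equation $\pi_1(x)=\pi_1(y)$ forces $y=\pm x$ (as $\pi_1$ is the quotient by $\sigma_1(x)=-x$), and $\pi_2(x)=\pi_2(y)$ forces $P_0-y=\pm(P_0-x)$. If $y=-x$ then the second equation gives either $P_0+x=P_0-x$, i.e.\ $2x=O$, or $P_0+x=-P_0+x$, i.e.\ $2P_0=O$. The latter contradicts $P_0$ being nontorsion; the former combined with $y=-x$ forces $y=x$. So $(\pi_1,\pi_2)$ is injective on closed points.

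Next I would check that the differential of $(\pi_1,\pi_2)$ is injective at every closed point, equivalently that $\pi_1$ and $\pi_2$ are not simultaneously ramified. The ramification locus of $\pi_1$ is the fixed locus of $\sigma_1$, namely the $2$-torsion subgroup $E[2]$. The map $\pi_2=\pi_1\circ\sigma$ ramifies exactly at the fixed locus of $\sigma$, i.e.\ at the points $x$ with $P_0-x=x$, that is, the set $\tfrac12 P_0\coloneqq\{x\in E(\bar k):2x=P_0\}$. A point in $E[2]\cap \tfrac12 P_0$ would satisfy $2x=O=P_0$, contradicting that $P_0$ is nontorsion (in particular nonzero). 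Hence the two ramification loci are disjoint, so at every point at least one of $d\pi_1,d\pi_2$ is nonzero, proving that $(\pi_1,\pi_2)$ is unramified.

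Since $E$ is a projective (hence proper) curve and $\mathbb{P}^1\times\mathbb{P}^1$ is separated, the map $(\pi_1,\pi_2)$ is a proper, injective, unramified morphism from a reduced scheme; such a morphism is a closed immersion. The symmetry is immediate from the definitions: $\pi_1\circ\sigma=\pi_2$ by construction, and $\pi_2\circ\sigma=\pi_1\circ\sigma^2=\pi_1$ since $\sigma$ is an involution, so $(\pi_1,\pi_2)\circ\sigma=(\pi_2,\pi_1)$ as required. The only subtle step is the disjointness of ramification loci, but once phrased as $E[2]\cap\tfrac12 P_0=\emptyset$ it reduces directly to the nontorsion hypothesis on $P_0$.
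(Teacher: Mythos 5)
Your overall strategy is the same as the paper's: check injectivity on geometric points, check that $\pi_1$ and $\pi_2$ are not simultaneously ramified, and conclude that a proper injective unramified morphism from a reduced scheme is a closed immersion. The injectivity part is correct and matches the paper.

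However, your identification of the ramification locus of $\pi_2$ is wrong. You claim that $\pi_2 = \pi_1\circ\sigma$ ramifies at the fixed locus of $\sigma$, i.e.\ at $\{x : 2x = P_0\}$. That is not the case: $\sigma$ is not the deck involution of $\pi_2$ (it intertwines the fibers of $\pi_1$ and $\pi_2$, since $\pi_1\circ\sigma = \pi_2$, rather than preserving the fibers of $\pi_2$). The deck involution of $\pi_2$ is $\sigma_2 = \sigma\sigma_1\sigma$, which sends $x$ to $2P_0 - x$, so its fixed locus is $\{x : 2x = 2P_0\}$. Equivalently, since $\sigma$ is an automorphism, $\pi_1\circ\sigma$ is ramified at $x$ exactly when $\pi_1$ is ramified at $\sigma(x)$, i.e.\ when $\sigma(x)\in E[2]$, i.e.\ $2(P_0-x)=O$, again giving $\{x : 2x = 2P_0\} = \sigma(E[2])$, as in the paper.

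Your conclusion happens to be salvageable: the correct intersection $E[2]\cap\{x : 2x=2P_0\}$ is also empty, because a common point would give $2P_0 = 2x = O$, again contradicting that $P_0$ is nontorsion. So the lemma is still true, but the displayed computation of where $\pi_2$ ramifies needs to be corrected before the argument is sound.
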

\begin{proof}
Let us show $(\pi_1,\pi_2)$ is injective. The map $\pi_1$ identifies pairs of points $x$ and $-x$, and it is ramified at the 2-torsion points. Meanwhile, if $x\neq y$
\[
\pi_1(\sigma x) = \pi_2(x) = \pi_2(y)=  \pi_1(\sigma y) \Leftrightarrow P_0-x = \sigma x = - \sigma y = -P_0 + y \Leftrightarrow x =  2P_0 - y.
\]
Since $E$ is a group and $2P_0\neq O$, there are no $x, y\in E$ such that $x=-y$ and $x=2P_0-y$. Therefore, $(\pi_1,\pi_2)$ is injective. To see that it is an embedding, we only need to see that it is unramified. The geometric points where $(\pi_1,\pi_2)$ is ramified will be the points where both $\pi_1$ and $\pi_2$ are ramified. The former are the 2-torsion points and the latter are image of the 2-torsion points by $\sigma$. Therefore, if $(\pi_1,\pi_2)$ is ramified at $x$ we have that $2x=O$ and $2P_0-2x = O $, which implies that $P_0$ is 2-torsion, a contradiction.
\end{proof}

To study $E$-modules of generic rank $1$, we will start by studying $E$-module structures on the (pushforward of) the stalk of the generic point of $\P^1$, i.e. the sheaf whose sections on any nonempty open set equal $k(z)$. Then, we will look at torsion-free $E$-modules of generic rank $1$, i.e. modules which embed into $k(z)$. Finally, we will consider $E$-modules with torsion.

\begin{notation}
Let $z$ be a coordinate on $\P^1$, and let $(z_1,z_2)$ be coordinates on $\P^1\times \P^1$ pulled back from $z$. Let $k(z)$ (resp. $k(E)$) be the field of rational functions of $\P^1$ (resp. $E$). For a point $x\in E(k)$, we will use $(x)$ to denote the corresponding divisor. Let $G$ be the infinite dihedral group generated by $\sigma$ and $\sigma_1$. Let $S$ be a set of representatives of $E(k)/G$, chosen such that it contains every $x$ such that $2x=P_0$ and it contains the 2-torsion points.

We will use Proposition~\ref{prop:EllipticModulesAreDihedral} to see $E$-modules as $G$-equivariant modules on $E$. Recall that to define the local type we have to take a generator $\tau$ of $\Z\subseteq G$. We will let $\tau = \sigma\sigma_1:x\mapsto x+P_0$.
\end{notation}

\begin{remark}
The conjugacy classes of $G$ have a set of representatives given by $\{\sigma_1,\sigma\}\cup \{\tau^n\mid n\in \Z\}$. Since the action of $\{\tau^n\}$ is free on $E(k)$, the only nontrivial stabilizers of points of $E$ are conjugate to $\{1,\sigma_1\}$ or $\{1,\sigma\}$, i.e. the only orbits which are not in bijection with $G$ are those of the 2-torsion points and the points $x$ such that $2x=P_0$.
\end{remark}

\begin{lemma}\label{lem:niceDivisor}
Let $M$ be the pushforward to $\P^1$ of a rank 1 free module on the generic point of $\P^1$, i.e. the module $k(z)\cdot s$ on every open set.

Every $E$-module structure on $M$ can be represented in the following way. There exists a generator $s\in M$ such that $\cA\pi_1^* s = f \pi_2^*s$ for a function $f\in k(E)$ such that $f\cdot f^\sigma = 1$ and further there exist some integers $n_x$ such that
\begin{equation}\label{eq:niceDivisor}
\div f = \sum_{x\in S} n_x \cdot (x) - \sum_{x\in S} n_x\cdot(P_0-x).
\end{equation}
Further, if $x\neq O$ and $2x=O$, we can choose $n_x\in \{0,1\}$;	 and if $(2x)=P_0$, we have $n_x=0$. With these restrictions, all the coefficients $n_x$ are uniquely determined.

In all the cases, the automorphism group of $M$ is $k^\times$.
\end{lemma}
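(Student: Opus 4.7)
The plan is to translate an $E$-module structure on $M$ into the data of a single rational function on $E$ satisfying an antisymmetry relation, and then to normalize its divisor using the freedom in the choice of generator. First, I will use Proposition~\ref{prop:EllipticModulesAreDihedral} to view such a structure as a $G$-equivariant sheaf on $E$. Since $M$ is the pushforward of $k(z)\cdot s$ from the generic point of $\P^1$, the pullback $\pi_1^*M$ is the constant $k(E)$-sheaf generated by $\pi_1^*s$, so the $\sigma$-equivariant structure $\cA\colon\pi_1^*M\to\pi_2^*M$ is multiplication by a unique $f\in k(E)^\times$, and the cocycle condition $\sigma^*\cA\circ\cA=\Id$ reduces to $f\cdot f^\sigma=1$. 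Replacing $s$ by $gs$ for $g\in k(z)^\times$ multiplies $f$ by $\pi_2^*g/\pi_1^*g$, which is the only freedom in $f$.

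Next I analyze $\div f\in\Div(E)$. The relation $ff^\sigma=1$ gives $\div f+\sigma^*\div f=0$, so the multiplicities satisfy $n_x+n_{P_0-x}=0$; in particular $n_x=0$ whenever $\sigma x=x$, i.e.\ when $2x=P_0$. The freedom in $f$ shifts $\div f$ by $\pi_2^*\div g-\pi_1^*\div g$, and since $\operatorname{Pic}(\P^1)=0$ the divisor $\div g$ can be any degree-zero divisor on $\P^1$. For a point $y\in\P^1$ with $\pi_1^{-1}(y)=\{x_y,-x_y\}$ the associated freedom divisor is $[P_0-x_y]+[P_0+x_y]-[x_y]-[-x_y]$; when $y$ is a branch point of $\pi_1$ (so $x_y$ is $2$-torsion) this degenerates to $2[P_0+x_y]-2[x_y]$. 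Iterating such moves, I can propagate the coefficients of $\div f$ along the $G$-orbits and concentrate the support of $\div f$ inside $S\cup(P_0-S)$; at $2$-torsion $x\neq O$ the available shifts are only by $\pm 2$, which forces the mod-$2$ normalization $n_x\in\{0,1\}$.

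For uniqueness I examine which freedom divisors have support contained in $S\cup(P_0-S)$. Inspecting the four-point supports above, one sees that $\div g$ must in fact be supported on the branch locus of $\pi_1$: any other choice of $y$ produces support outside $S\cup(P_0-S)$, because a generic $G$-orbit meets $S\cup(P_0-S)$ in only two points. The resulting branch-point freedom lattice is generated by the shifts $(2(a_1+a_2+a_3),-2a_1,-2a_2,-2a_3)$ in $\Z^4=\Z\langle n_O,n_{T_1},n_{T_2},n_{T_3}\rangle$ (parameterized by $a_1,a_2,a_3\in\Z$), and one checks directly that normalizing $n_{T_i}\in\{0,1\}$ determines the remaining integer $n_O$ uniquely. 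The main obstacle I expect is this orbit-by-orbit bookkeeping, especially the explicit computation of the freedom lattice at the four $2$-torsion points and the verification that the lemma's normalization gives a unique representative in every class.

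Finally, $\Aut(M)$ consists of those $g\in k(z)^\times$ (acting by multiplication on $M$) commuting with $\cA$, which amounts to $\pi_1^*g=\pi_2^*g$ in $k(E)$, i.e.\ $\pi_1^*g$ is $\sigma$-invariant. Being pulled back from $\P^1$, $\pi_1^*g$ is already $\sigma_1$-invariant, hence $G$-invariant on $E$. Since $G$ contains the translation $\tau$ by the nontorsion point $P_0$, all $G$-orbits on $E(k)$ are infinite, so any $G$-invariant rational function on $E$ must be constant, giving $\Aut(M)=k^\times$.
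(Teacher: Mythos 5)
Your high-level approach is the same as the paper's: reduce an $E$-module structure on $k(z)\cdot s$ to the data of $f\in k(E)^\times$ with $ff^\sigma=1$, work modulo the gauge group $H=\{D-\sigma(D)\mid D\in\Prin(E),\ \sigma_1 D=D\}$, normalize the divisor orbit by orbit, and note at the end that $G$-invariance of $\div g$ forces $g$ constant. The existence part of your sketch (iterating the four-point moves to push coefficients into $S\cup(P_0-S)$, with the mod-$2$ reduction at the nontrivial $2$-torsion points and the automatic vanishing when $2x=P_0$) tracks the paper's relation~(\ref{eq:divisorRel}) closely. The automorphism computation is also correct and essentially identical.

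The gap is in uniqueness. You write that ``one sees that $\div g$ must in fact be supported on the branch locus of $\pi_1$: any other choice of $y$ produces support outside $S\cup(P_0-S)$.'' This is an observation about a \emph{single} generator $D_y$ of $H$, but what you need is that no nontrivial $\Z$-linear combination of the $D_y$'s is supported on $S\cup(P_0-S)$ except branch-locus combinations. A priori, cancellations among many $D_y$'s with $y$ inside a given generic orbit could produce a divisor supported only on the two representatives $\{x,P_0-x\}$ of that orbit; ruling this out requires exhibiting a quantity that is invariant under adding $D_y$'s and yet detects the coefficient $n_x$. This is exactly what the paper does: for each $x\in S$ it constructs a homomorphism $\Div(E)\to\Z$ (the sum $\sum_n a_n$ over $x+\Z P_0$), checks that it kills every $D_y$, and reads off $n_x$; for nontrivial $2$-torsion it uses a $\Z/4\Z$-valued variant $\sum(-1)^n a_n$; and for $x=O$ it uses a third, orbit-indexed invariant. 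Your branch-locus-lattice computation, even if correct, is therefore predicated on a step you have not established. You do flag this yourself (``The main obstacle I expect is this orbit-by-orbit bookkeeping\ldots''), so the missing piece is precisely the construction and verification of these orbit-wise invariants, and without them the uniqueness claim is unproved. A secondary point worth double-checking once you do the bookkeeping: the paper tracks the ramification terms $-2(O)+2(P_0)$ inside every $D_x$ (these couple all the orbits to the orbit of $O$), and this is what makes $n_O$ behave differently from the other coefficients; your sketch drops these terms early on, which is harmless for existence but will matter when you try to pin down $n_O$.
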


\begin{proof}
An $E$-module structure on $M$ consists of an isomorphism $\cA:\pi_1^* M \to \pi_2^*M$ such that $\sigma^*\cA\circ \cA = \Id$. Let $k(E)$ be the field of rational functions on $E$. As quasicoherent sheaves, $\pi_1^*M\cong \pi_2^*M\cong k(E)$. Therefore, $\cA$ is given by a choice of some $f\in k(E)$, and letting $\cA(\pi_1^*s) = f\pi_2^*s$, subject to the condition that $f^\sigma f= 1$.

Recall that $k(E)^\times/k^\times$ is isomorphic to the group of principal divisors on $E$, $\Prin(E)$. The equation $ff^\sigma =\Id$ implies that the divisor of $f$ has a similar relation. Namely, for some $n_j\in \Z$ and some $x_j\in E$,
\[
\div f = \sum_j n_j x_j - n_j(P_0-x_j).
\]
Therefore, rank $1$ elliptic modules up to constant are parametrized (non-uniquely) by $\{D\in \Prin(E)\mid \sigma(D) = -D\}$.

Let us see the effect of gauge transformations. An automorphism of $M$ is given by sending $s\mapsto g(z)s$ for some $g\in k(z)$. Letting $\wt s= g(z)s$, and denoting by $(z_1,z_2)$ the coordinates of $\P^1\times \P^1$, we have:
\[
\cA (\pi_1^* \wt s) = \cA( g(z_1)\pi_1^*s) = g(z_1)f\pi_2^*s = \frac{g(z_1)}{g(z_2)} f\pi_2^*\wt s.
\]
So gauge transformations take the form $f\mapsto \wt f = \frac{g(z_1)}{g(z_2)} f = \frac{g}{g^\sigma} f$ for some $g\in k(z)$. In terms of divisors, $\div g$ is a divisor in $\Prin(E)$ which is $\sigma_1$-invariant, and we will have that $\div \wt f = \div g-\sigma(\div g) + \div f$. Therefore, if we let $\sim$ we the equivalence relation on elliptic modules generated by isomorphisms and multiplication of $\cA$ by constants (necessarily $\pm 1$), we have:
\begin{equation}\label{eq:divisorQuotient}
\frac{\{
\text{Elliptic module structures on M}
\}}{\sim} \cong
\frac{\{D\in \Prin(E)\mid \sigma(D) = -D \}}{\{D-\sigma(D)\mid D=\sigma_1(D), D\in \Prin(E) \}}.
\end{equation}
Let us denote $H = \{D-\sigma(D)\mid D=\sigma_1(D), D\in \Prin(E) \}$. The statement we are trying to prove is that the quotient on the right hand side is generated by elements of the form (\ref{eq:niceDivisor}). We will show that this is the case for the larger group $
\frac{\{D\in \Div(E)\mid \sigma(D) = -D \}}{H}$. Equivalently, we can show the same statement replacing $H$ by $\wt H = H + \langle(O)-(P_0)\rangle$, since $(O)-(P_0)$ is included in the generators we are looking for.

Principal divisors invariant by $\sigma_1$ are divisors pulled back from $\P^1$ via $\pi_1$, so they are generated by divisors of the form $(x) + (-x) - 2 (O)$. Therefore, $H$ is generated by divisors of the form
\[
D_x = (x) + (-x) - 2 (O) - \sigma \left( (x) + (-x) - 2 (O)\right) = (x) + (-x) - (P_0-x) - (P_0+x) - 2(O) + 2(P_0).
\]


Let us consider an orbit $G\cdot x$, and consider a divisor $D$ supported on $G\cdot x$ such that $D = -\sigma(D)$. Let us show that there exists $n\in \Z$ such that
\[
D \equiv n(x)-n(P_0-x)  \mod \wt H.
\]
We can do this by induction. If $D = -\sigma(D)$, then $D$ is a linear combination of divisors of the form $(mP_0+x) - ((1-m)P_0-x)$ for $m\in \Z$. Note that
\[
D_{(m-1)P_0+x} \equiv ((m-1)P_0+x) + ((1-m)P_0-x) - ((2-m)P_0-x) - (mP_0+x) \mod ((O)-(P_0)) \Rightarrow\]\begin{equation}\label{eq:divisorRel}
(mP_0+x) - ((1-m)P_0-x) \equiv ((m-1)P_0+x)  - ((2-m)P_0-x) \mod \wt H.
\end{equation}

Iterating this relation, we have that $(mP_0+x) - ((1-m)P_0-x) \equiv (x) - (P_0-x)\mod \wt H$, as desired. In the particular case where $2x=P_0$, then it will follow that $(mP_0+x) - ((1-m)P_0-x) \equiv 0$. If $x\neq O$ is 2-torsion, then $D_x\equiv 2(x) - 2(P_0-x) \mod \wt H$, so further we can reduce $n_x$ modulo 2, as desired.

Now let us discuss the uniqueness. We will construct homomorphisms with domain $\Div(E)/H$ and show that all of them together determine the coefficients $n_x$. For any $x\in S$, consider the following map $\Div(E)\to \Z$:
\[
\sum_{n\in \Z} a_n(x+nP_0)+ \sum_{Q\notin x+\Z P_0} b_Q (Q) \longmapsto \sum_{n\in \Z} a_n.
\]
We can see that the map above is well-defined on the equivalence classes modulo $H$, i.e. that it sends $D_y$ to $0$ for every $y\in E$. Therefore, the number $n_x\in \Z$ in (\ref{eq:divisorRel}) uniquely determined.

It remains to consider $n_x$ for $(2x)=(O)$. If $x\neq O$, we use the following invariant with values in $\Z/4\Z$:
\[
\sum_{n\in \Z} a_n(x+nP_0)+ \sum_{Q\notin x+\Z P_0} b_Q (Q) \longmapsto \sum_{n\in \Z} (-1)^na_n \in \Z/4\Z.
\]
Lastly, if $x=O$, we can verify that the following map gives an invariant. Write every point as $\epsilon x + nP_0$, where $\epsilon\in \pm 1$ and $x\in S$. If $2x = nP_0$, then assume $\epsilon = 1$, and let:
\[
\epsilon x + nP_0\mapsto n
\]
We can verify directly that the above map sends $D_{(\epsilon x+nP_0)}$ to $0$ for every $\epsilon$ and $P_0$, and therefore it is an invariant. This invariant applied to (\ref{eq:niceDivisor}) yields
\[
\sum_{x\in S} n_x \cdot (x) -  n_x\cdot(P_0-x) \mapsto -n_O - \sum_{\substack{ x\in S\\ x\neq O} } n_x .
\]
In particular, we have shown that $n_x$ is uniquely determined for all $x$ except for $x=O$, and this invariant shows that $n_O$ is uniquely determined as well.

It only remains to verify that the automorphism group is $k^\times$. Any $\C(z)$-linear automorphism of $M$is given by multiplication by $g\in k(z)$. For multiplication to be an $E$-module morphism, we require that $\pi_2^*g \cA = \cA \pi_1^*g$. Let $D=\div g$. The previous equation implies that $\pi_1^*D = \pi_2^*D$, in other words, $ \sigma^*\pi_1^*D = D$. Since $\sigma_1^*\pi_1^*D = \pi_1^*D$, this implies that $\pi_1^*D$ is $G$-invariant. Since $G$ has no finite orbits, this implies that $D=0$, so $g$ must be a constant function.

\end{proof}

Now that we have listed all the elliptic module structures on $k(z)$, let us compute their local types. For any point $p\in E$, we will let $R_p$ be its completed local ring and $K_p$ will be the field of fractions of $R_p$.

\begin{lemma}
As above, let $M = k(z)\cdot s$ be an elliptic module, with $\cA \pi_1^*s = f\pi_2^* s$ and such that
\[
\div f = \sum_{x\in S} n_x \cdot (x) - \sum_{x\in S} n_x\cdot(P_0-x).
\]
Choose $p\in S$, let $\rho$ be a local generator of the maximal ideal at $p$, and let $R_p \cong k[[\rho]]$ be the completed local ring at $p$. Consider $\pi_1^*M$ as a $G$-equivariant sheaf. Then the local type at $p$ is determined as follows:
\begin{enumerate}
\item If $\St_p = \{1\}$, $M|_{\pz}^l$ is generated over $R_p$ by $\{ \rho^{-n_p}\pi_1^*s\}$ and $M|_{\pz}^r$ is generated by $\{\pi_1^*s \}$.
\item If $\St_p = \{1,\sigma\}$, $M|_{\pz}^{lr}$ is generated over $R_p$ by $\{\pi_1^*s\}$.
\item If $\St_p = \{1,\sigma_1\}$, $M|_{\pz}^{lr}$ is generated over $R_p$ by $\{ \rho^{-n_p}\pi_1^*s\}$.
\end{enumerate}
\end{lemma}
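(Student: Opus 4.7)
The plan is to reduce the claim to a divisor computation on $E$. First, using Proposition~\ref{prop:EllipticModulesAreDihedral}, I would view $M$ as the $G$-equivariant sheaf $\pi_1^*M$, which as a quasicoherent sheaf is the constant sheaf $k(E)$ generated by $s':=\pi_1^*s$. The equivariant structure has $\cA_{\sigma_1}$ the canonical isomorphism from Lemma~\ref{lem:z2-descent} and $\cA_\sigma(s')=f\cdot \sigma^*s'$. Unwinding $\tau=\sigma\sigma_1$ via $\cA_\tau=\sigma_1^*\cA_\sigma\circ \cA_{\sigma_1}$, and using $\sigma_1\tau^{-1}=\sigma$ together with $f\cdot f^\sigma=1$, I would obtain
\[
\ov\tau(s')=(\tau^{-1})^*\bigl(\sigma_1^*f\cdot \tau^*s'\bigr)=(\sigma_1\tau^{-1})^*f\cdot s'=\sigma^*f\cdot s'=f^{-1}\cdot s'.
\]

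Next I would take the coherent subsheaf $L=\O_E\cdot s'$ (whose cokernel $k(E)/\O_E$ vanishes at the generic point, so $L$ is admissible in Definition~\ref{def:mainDef}) and define $g_n\in k(E)^\times$ by $\ov\tau^n(s')=g_n\cdot s'$. Iterating the recursion $g_{n+1}=f^{-1}\cdot g_n^{\tau^{-1}}$ with $g_0=1$ gives
\[
\div g_n=-\sum_{k=0}^{n-1}\tau_*^k\div f=-\sum_{k=0}^{n-1}\sum_{x\in S}n_x\bigl[(x+kP_0)-((k+1)P_0-x)\bigr]
\]
for $n\ge 1$, together with a symmetric formula $\div g_{-n}=\sum_{k=1}^n\tau_*^{-k}\div f$. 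A direct stalk computation gives $(\ov\tau^nL)_p=\rho^{v_p(g_n)}R_p\cdot s'$, reducing the problem to computing $v_p(g_{\pm n})$ for $n\gg 0$.

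Finally I would do the case analysis. Every contribution to $v_p(g_n)$ comes from $(x,k)$ with $x+kP_0=p$ or $(k+1)P_0-x=p$; since $S$ selects one representative per $G$-orbit, both force $x=p$, after which the surviving conditions become $kP_0=0$ (only $k=0$ works, since $P_0$ is nontorsion) and $(k+1)P_0=2p$. In Case~1 the latter has no solution, so $v_p(g_n)=-n_p$ and $v_p(g_{-n})=0$ for all $n\ge 1$; since $\St_p^*=\St_p=\{1\}$, we are in case~(i) of Definition~\ref{def:mainDef}, yielding the asserted $M|_{U_p}^l$ and $M|_{U_p}^r$. In Case~2 ($2p=P_0$), $(k+1)P_0=2p$ is solved at $k=0$ and the two $k=0$ contributions cancel, giving $v_p(g_n)=0$; here $\sigma\tau\sigma=\tau^{-1}$ forces $\St_p^*\neq \St_p$ and we are in case~(ii). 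In Case~3 ($2p=0$), $(k+1)P_0=0$ has no solution with $k\ge 0$, so only the first contribution survives and $v_p(g_n)=-n_p$; again case~(ii) applies. The main obstacle is purely bookkeeping: tracking the $G$-orbit combinatorics and correctly matching each stabilizer type to the appropriate case of Definition~\ref{def:mainDef}; the underlying divisor computation itself is routine once the setup is in place.
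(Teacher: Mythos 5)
Your proposal is correct and follows essentially the same route as the paper: iterate $\ov\tau$ on the generator $\pi_1^*s$, express $\ov\tau^n\pi_1^*s = g_n\pi_1^*s$ for an explicit rational function $g_n$ (the paper writes $g_n=f^{\sigma\tau^{-n+1}}\cdots f^{\sigma}$, your divisor formula $\div g_n = -\sum_{k=0}^{n-1}\tau_*^k\div f$ is the same thing under $f^{\sigma}=f^{-1}$), and then read off $v_p(g_{\pm n})$ from the translated-divisor expansion. Your Case~2 argument differs cosmetically: the paper cites the normalization $n_p=0$ for $2p=P_0$ from Lemma~\ref{lem:niceDivisor}, whereas you observe directly that the two $k=0$ terms $(p)$ and $(P_0-p)=(p)$ cancel; both are valid and give the same answer.
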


\begin{proof}
Let $L\subseteq \pi_1^*M$ be the rank 1 free $\O_E$-module generated by $s_1\coloneqq \pi_1^*s$. Our goal is to find the stalk at $p$ of $\ov\tau^n$ for $n\gg 0$ and $n\ll 0$. First, note that
\[
\ov \tau s_1 =\ov\sigma \ov\sigma_1 \pi_1^*s = \sigma^*\circ \cA_\sigma \circ \sigma_1^*\circ \cA_{\sigma_1} s_1 = \sigma^*\circ \cA_\sigma \pi_1^*s = \sigma^* (f \pi_2^*s)  = f^\sigma \sigma^*\pi_2^*s = f^\sigma s_1.
\]
Now, by induction, we have that if $n > 0$, $\ov\tau^n s_1 = f^{\sigma\tau^{-n+1}}f^{\sigma\tau^{-n+2}}\cdots f^\sigma s_1$:
\[
\ov\tau (\ov\tau^n s_1) = \ov \tau (f^{\sigma\tau^{-n+1}}f^{\sigma\tau^{-n+2}}\cdots f^\sigma s_1) = (f^{\sigma\tau^{-n+1}})^{\tau^{-1}} (f^{\sigma\tau^{-n+2}})^{\tau^{-1}}\cdots (f^\sigma)^{\tau^{-1}}\ov\tau s_1 = f^{\sigma\tau^{-n}}f^{\sigma\tau^{-n+1}}\cdots f^\sigma s_1.
\]
From the equation $\ov\tau s_1 = f^\sigma s_1$, we can also conclude that $\ov\tau^{-1} s_1 = (f^{-1})^{\sigma_1} s_1$, and analogously, if $n>0$, $\ov \tau^{-n}s_1= (f^{-1})^{\sigma_1\tau^{n-1}} \cdots (f^{-1})^{\sigma_1} s_1$. Now, note that for any $g\in G$, $\div f^g = \div (f\circ g) = g^{-1}(\div f)$.

Then the support of $\div f^g$ contains $p$ if and only if $g(p) = p$ or $p = g^{-1}(P_0-p) = g^{-1}\sigma (p)$, i.e. if $g$ or $\sigma g$ stabilize $p$.

Suppose that the stabilizer of $p$ is trivial. Then, every function of the form $f^{\sigma \tau^{m}}$ and $f^{\sigma_1 \tau^{m-1}}$ has no zeroes or poles at $p$ unless $m=0$, so it is a unit of $R_p$. By the formulas above, this shows that for $n\ge 1$, $\ov \tau^n s_1$ differs from $\ov\tau s_1$ by multiplication by a unit in $R_p$, and for $n\le 0$, $\ov \tau^n s_1$ equals $s_1$ up to multiplication by a unit. Therefore, if $n\gg 0$,
\[
M|_{\pz}^l = (\tau^n M)_p = R_p \cdot f^\sigma s_1 = R_p \cdot \rho^{-n_p} s_1;\quad
M|_{\pz}^r = (\tau^{-n} M)_p = R_p \cdot  s_1.
\]
If $\sigma p =p$, then the $f$ we have chosen has no zeroes or poles on $G\cdot p$, so $M|_{\pz}^{lr} = R_p\cdot s$. Finally, if $\sigma_1 p = p$, then the following four functions may have zeroes or poles at $p$: $f,f^\sigma,f^{\sigma_1}, f^{\sigma \sigma_1}$. Repeating the reasoning above, for $n\gg 0$
\[
M|_{\pz}^{lr} = (\tau^n M)_p = R_p \cdot f^\sigma s_1 =  R_p \cdot\rho^{-n_p} s_1 .
\]
\end{proof}

Using the local type, we can easily describe the $E$-submodules of $M$. We can in fact describe all the $G$-equivariant subsheaves of $\pi_1^*M$, and using Proposition~\ref{prop:EllipticModulesAreDihedral}, the $E$-modules will be found among these.

\begin{lemma}\label{lem:allTorsionFreeModules}
Let $M\in \Hol(E)$, and let $S$ be a set of representatives of $E(k)/S$ as above. The following sets are in bijection:
\begin{enumerate}
\item The set of $G$-equivariant sheaves $M'$ of $M$ such that $M/M'$ is a torsion sheaf.
\item The collections $\{M'_p\mid p\in S \}$, where $M'_p\subseteq M_p$ is an $R_p$-submodule preserved by the induced action of $\St_p$ and such that $M'_p\supseteq M|_{\pz}^\star$ for all meaningful $\star = l,r,lr$.
\end{enumerate}
The bijection is given by taking a module $M'$ and considering the collection of its formal stalks at $p\in S$.
\end{lemma}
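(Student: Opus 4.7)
The plan is to construct explicit maps in both directions and verify they are mutually inverse, applying Theorem~\ref{thm:fiberFinSt} once per orbit as the main tool. In the forward direction, given $M'\subseteq M$ with $M/M'$ torsion, I would assign to $M'$ the collection $\{M'_p\}_{p\in S}$, where $M'_p := R_p\otimes_{\O_{E,p}} M'_{(p)}\subseteq M_p$ is the formal completion of $M'$ at $p$. This is automatically a $\St_p$-invariant $R_p$-submodule of $M_p$. To verify $M'_p\supseteq M|_\pz^\star$, pick a coherent $L\subseteq M'$ with $M'/L$ supported on closed points; since $M/M'$ is also torsion supported on closed points, this same $L$ satisfies the hypothesis of Definition~\ref{def:mainDef} for $M$, yielding $M|_\pz^\star = (\ov\tau^{\pm n}L)_p\subseteq M'_p$.

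For the backward direction, given $\{M'_p\}_{p\in S}$, I would apply Theorem~\ref{thm:fiberFinSt} one orbit at a time. For fixed $p\in S$, form an object $\wt M'_p\in \Mod(\pz)$ with underlying $R_p$-module $M'_p$, induced $\St_p$-action, and $\star$-part declared to be $M|_\pz^\star\subseteq M'_p$. The inclusion $\wt M'_p\hookrightarrow M|_\pz$ is a monomorphism in $\Mod(\pz)$ that becomes an isomorphism after $j^*$, since $M_p/M|_\pz^\star$ is $p$-torsion and hence $K_p\otimes M'_p = K_p\otimes M|_\pz^\star = K_p\otimes M_p$. Paired with the identity on $M|_{E\setminus Gp}$, this is a monomorphism in the fiber product, corresponding by Theorem~\ref{thm:fiberFinSt} to a monomorphism $M^{(p)}\hookrightarrow M$ in $\Hol(E)$ that agrees with $M$ on $E\setminus Gp$. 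For $q\in S$ with $q\neq p$, disjointness of $Gp$ and $Gq$ gives $M^{(p)}|_{U_q} = M|_{U_q}$. I then set $M' := \bigcap_{p\in S} M^{(p)}\subseteq M$, which is automatically $G$-equivariant and has torsion quotient, since each $M^{(p)} = M$ at generic points. The key identity $M'|_\pz = M'_p$ follows because only the $p$-th intersectand is non-trivial at $p$: $M^{(q)}_{(p)} = M_{(p)}$ for every $q\neq p$, so $M'_{(p)} = M^{(p)}_{(p)}$, and flatness of $R_p$ over $\O_{E,p}$ gives $M'|_\pz = M^{(p)}|_\pz = M'_p$.

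For mutual inverses, backward-then-forward is immediate from the key identity. For forward-then-backward, starting with $M'\subseteq M$, the backward construction applied to $\{M'_p\}_{p\in S}$ produces some $\tilde M'\subseteq M$ whose formal completion at each $p\in S$, and hence at every closed point by $G$-equivariance, matches that of $M'$. The sheaf $M'/(M'\cap \tilde M')$ embeds into the torsion sheaf $M/\tilde M'$ and has vanishing formal stalks at all closed points; by faithful flatness of Noetherian completions its stalks vanish, so $M'\subseteq \tilde M'$, and symmetrically $\tilde M'\subseteq M'$. The main obstacle will be the key identity $M'|_\pz = M'_p$: the intersection runs over the potentially infinite set $S$, so one must exploit orbit-disjointness to make it locally finite at each formal neighborhood, thereby reducing to the one-orbit setup handled by Theorem~\ref{thm:fiberFinSt}.
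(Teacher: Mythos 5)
Your forward map and the injectivity / mutual-inverse arguments are fine, and constructing each $M^{(p)}$ by a per-orbit application of Theorem~\ref{thm:fiberFinSt} is a clean idea. The gap is in the passage to $M' := \bigcap_{p\in S}M^{(p)}$, and while you have located the right pressure point (the intersection runs over an infinite index set), your proposed fix --- ``exploit orbit-disjointness to make it locally finite at each formal neighborhood'' --- is not a valid mechanism. Orbit-disjointness tells you that $\bigcap_p M^{(p)}_{(q)} = M^{(p_0)}_{(q)}$, i.e.\ only one intersectand is nontrivial at the level of \emph{stalks}; it does not give $(\bigcap_p M^{(p)})_{(q)} = \bigcap_p M^{(p)}_{(q)}$. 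The problem is that each orbit $Gp$ is Zariski-dense in its component of $E$, so every open $U\ni q$ meets infinitely many orbits, and you cannot shrink $U$ away from the infinitely many constraints $\wt t\in M^{(p)}(U)$ simply by using that the $Gp$ are pairwise disjoint. For an infinite family of subsheaves whose quotients have pairwise disjoint supports, the stalk of the intersection can be strictly smaller than the intersection of the stalks: e.g.\ the ideal sheaves $I_q\subset\O_{\A^1}$ of the closed points of $\A^1$ have $\bigcap_q I_q = 0$, while $\bigcap_q (I_q)_{(q_0)}=\mathfrak{m}_{q_0}\neq 0$. (This example of course violates $M'_p\supseteq M|_\pz^\star$; the point is that disjointness of supports is not what saves you.)

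What actually saves the argument is exactly the piece of the paper's proof you skipped. One must show that for a local section $\wt t\in M(U)$, the set of points at which $\wt t$ fails to lie in some $M^{(p)}$ is \emph{finite}. This follows from the fact --- the technical heart of the paper's construction of $j_{!*}M$ --- that a coherent $L\subseteq M$ with $M/L$ torsion satisfies $L_q = M|_{U_q}^\star$ away from finitely many $G$-orbits (so $L\subseteq M^{(p)}$ for all $p$ outside these orbits, since $M^{(p)}_q\supseteq M|_{U_q}^\star$), together with the observation that for a ``bad'' orbit $Gp$ the coherent torsion sheaf $L/(L\cap M^{(p)})$ has finite support. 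Taking $L$ coherent with $\wt t\in L$ and shrinking $U$ away from the resulting finite set (which misses $p_0$) then gives $\wt t\in\bigcap_p M^{(p)}(U)$. So your route can be completed, but it is not a shortcut: the paper sidesteps the infinite intersection entirely by first constructing the minimal submodule $j_{!*}M$ using this same coherent-$L$ analysis together with only \emph{finitely many} applications of Theorem~\ref{thm:fiberFinSt}, and then identifying the $M'\supseteq j_{!*}M$ with submodules of the torsion quotient $T=M/j_{!*}M$, which splits as $\bigoplus_{Gp} T_{Gp}$ and therefore decomposes orbitwise for free.
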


\begin{proof}
To see that the given map from modules to collections of stalks is well-defined, we only need to verify the claim that $M'_p\supseteq M|_{\pz}^\star$. This follows from Proposition~\ref{prop:wellDefined}. If $M/M'$ is a torsion sheaf, we see directly that $(M/M')|_{\pz}^\star = 0$ for $\star = l,r,lr$. By part (4) of Proposition~\ref{prop:wellDefined}, this means that the inclusion $M'\to M$ induces isomorphisms $M'|_{\pz}^\star \cong M|_{\pz}^\star$. In particular,
$
M'_p \supseteq M'|_{\pz}^\star =M|_{\pz}^\star$.

Now, let us show that the map from modules to stalks is injective. Suppose we have $M',M''\subseteq M$ as in the statement, whose stalks agree on $S$. Then, using the action of $G$ we can see that their stalks agree on every point of $E$, so $M'=M''$, as a sheaf is determined by its stalks.

Finally, it remains to show that the map is surjective. Let us start by showing that there exists an $M'$ with the smallest possible stalks, i.e. $M'_p = M|_\pz^l+M|_\pz^r$ for $\St_p=1$ and $M'_p = M|_\pz^{lr}$ for $\St_p\neq 1$. Let us denote use $j_{!*}M_p$ to denote either $M|_\pz^l+M|_\pz^r$ or $M|_\pz^{lr}$, depending on $\St_p$. We will use Proposition~\ref{prop:EllipticModulesAreDihedral} to show instead the existence of $\pi_1^*M'$ and work with $G$-equivariant sheaves on $E$.

Let $L$ be any coherent subsheaf of $M$ (not necessarily preserved by $G$) that equals $M$ over the generic point. We claim that $L$ satisfies $L_p\subseteq j_{!*}M_p$ on every point away from a finite set. Since $L$ is coherent, $\ov\tau L$ is also coherent, so the modules $(L + \ov\tau L)/L$ and $(L + \ov\tau L)/\ov\tau L$ are both torsion and coherent, hence finite. This implies that the stalks of $\ov\tau L$ and $L$ differ only at a finite set. Iterating this reasoning, we have that for $p$ away from a (fixed) finite set of $G$-orbits, $(\ov\tau^nL)_p = L_p$ for all $n$. Therefore, $L_p = M|_{\pz}^\star$ on these orbits, by definition of $M|_{\pz}^\star$. Let us refer to the remaining orbits as the ``bad'' orbits.

Let $N$ be the smallest $G$-equivariant sheaf containing $L$, i.e. $N = \sum_{g\in G} \ov g L$. By the construction, the stalks of $N$ outside of a finite number of orbits agree with $j_{!*}M_p$. Now, let $p$ be a point in one of the bad orbits, let $E^*\coloneqq E\setminus Gp$, and consider the following object in $\Loc \times_{\Locc} \Hol(E^* )$: in $\Loc$, take $j_{!*}M_p\subseteq M|_{\pz}$, which is an object of $\Loc$ by Proposition \ref{prop:wellDefined}. In $\Hol(E^* )$, take $N|_{E^* }$, and glue them via the isomorphism $N|_{E^* }|_\pz \cong M|_{\pz} \cong K_p\otimes j_{!*}M_p$ that comes from $M$. By Theorem~\ref{thm:main}, there is some $N'\in \Hol(E)$ mapping to this pair of objects. It is a subobject of $M$, via the pair of maps $N'|_{\pz}=j_{!*}M_p\subseteq M|_{\pz}$ and $N'|_{E^*} \cong N|_{E^*} \subseteq M|_{E^*}$. Further, it agrees with $N$ outside of $Gp$, and on $Gp$ it has stalk equal to $j_{!*}M_p$. Thus, going from $N$ to $N'$ we have reduced the number of bad orbits by $1$, so we can repeat this until there are no bad orbits remaining. Let us call the resulting submodule $j_{!*}M$.

%
%

Now, let us show that the map in the statement is surjective: by the discussion above, any $E$-submodule of $M$ must contain $j_{!*}M$. Therefore, modules contained in $M$ and containing $j_{!*}M$ are in bijection with submodules of the torsion module $T\coloneqq M/j_{!*}M$. Being a torsion sheaf, $T$ is isomorphic as a quasicoherent sheaf to the direct sum of its stalks, and for every $g\in G$, $\cA_{g}$ induces an isomorphism $T_p\xrightarrow[\sim]{(\cA_g)_p} (g^*T)_p\cong g^*(T_{gp})$. It follows that as an $E$-module, $T$ splits as a direct sum:
\[
T = \bigoplus_{Gp\in E/G} T_{Gp};\quad \supp T_{Gp}\subseteq Gp.
\]
By restricting Theorem~\ref{thm:main} to the full subcategory of torsion modules supported on $Gp$, we can see that giving a torsion module is equivalent to giving an object $(M_p,M_{E^*},\cong)\in \Loc \times_{\Locc} \Hol(E^* )$, with the condition that $M_p$ is torsion and $M_{E^*}=0$, i.e. $E$-modules supported on $Gp$ are equivalent to torsion $R_p$-modules with an equivariant structure. In particular, submodules of $T_{Gp}$ are in bijection with submodules of its stalk at $p$. Putting all the orbits together and using the fact that stalks commute with direct sums, we have concluded the proof.
\end{proof}

The previous proposition allows us to list all the submodules of $k(z)$ with any elliptic module structure, and obtain their local types. It remains to describe general modules which are generically $k(z)$. These are extensions of torsion-free modules by torsion modules. We begin by describing these extensions on formal neighborhoods.

\begin{lemma}\label{lem:extensionsSt=1}
Let $p\in E$ with $\St_p = \{1\}$. Suppose $M\in \Loc$ is torsion-free and finitely generated, and $T$ is a finitely generated torsion $R_p$-module, which can be seen as an element of $\Loc$ by (necessarily) letting $T^\star = 0$. Then extensions of $M$ by $T$ are classified by
\[
\Ext^1_{\Loc}(M,T) \cong \frac{\Hom_{R_p}(M^l,T)\oplus \Hom_{R_p}(M^r,T)}{\Hom_{R_p}(M,T)},
\]
Where $\Hom_{R_p}(M,T)$ maps into the two groups in the numerator by restriction from $M$ to $M^\star$. Therefore, the set of isomorphism classes of objects of $\Loc$ which are extensions of $M$ by $T$ is in bijection with the quotient
\[
\frac{\Ext^1_{\Loc}(M,T)}{\Aut_{\Loc}(M)\times \Aut_{R_p}(T)},
\]
where the automorphism groups act by (pre)composition.
\end{lemma}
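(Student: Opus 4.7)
The plan is to exploit the fact that in case (i), since $\St_p$ is finite and $p$ has an infinite orbit, the point $p$ is smooth, so $R_p$ is a DVR, hence a PID. Since $M$ is assumed finitely generated and torsion-free, the structure theorem for modules over a PID gives that $M$ is a free $R_p$-module of finite rank. In particular $\Ext^1_{R_p}(M, T) = 0$, so any underlying $R_p$-module extension $0 \to T \to N \to M \to 0$ is split: canonically $N \cong T \oplus M$.

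I would then parametrize the possible objects of $\Loc$ with underlying module $T \oplus M$ fitting in such a short exact sequence in $\Loc$. Applying the functors $-^l$ and $-^r$ to $0 \to T \to N \to M \to 0$ yields (since $T^\star = 0$) an isomorphism $N^\star \xrightarrow{\sim} M^\star$. Because $M^\star$ is free and the inclusion $M^\star \hookrightarrow M$ lifts to $N = T \oplus M$, the submodule $N^\star \subset T \oplus M$ is the graph of a uniquely determined $R_p$-linear map $\alpha^\star: M^\star \to T$. Conversely, any pair $(\alpha^l, \alpha^r) \in \Hom_{R_p}(M^l, T) \oplus \Hom_{R_p}(M^r, T)$ defines such graphs, and one checks that the resulting object lies in $\Loc$ (the quotients $(T \oplus M)/N^\star \cong M/M^\star$ are supported on $p$).

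Next I would identify equivalent extensions. An equivalence of two such extensions is a morphism in $\Loc$ between them inducing the identity on $T$ and $M$; by the five lemma it is automatically an isomorphism. As an $R_p$-module automorphism of $T \oplus M$ inducing the identity on $T$ and $M$, it must have the form $(t,m) \mapsto (t + \gamma(m), m)$ for some $\gamma \in \Hom_{R_p}(M, T)$, and under this automorphism the graph data transforms as $(\alpha^l, \alpha^r) \mapsto (\alpha^l + \gamma|_{M^l}, \alpha^r + \gamma|_{M^r})$. Setting the equivalence classes of such extensions equal to $\Ext^1_{\Loc}(M, T)$ and dividing out by this action gives precisely the displayed formula.

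Finally, to pass from equivalence classes to isomorphism classes of extension objects, I would invoke the standard (Yoneda-style) observation: two extensions define isomorphic objects of $\Loc$ if and only if they become equivalent after precomposing the inclusion $T \to N$ by some $\phi \in \Aut_{R_p}(T)$ and postcomposing the projection $N \to M$ by some $\psi \in \Aut_{\Loc}(M)$. This identifies isomorphism classes with the quotient $\Ext^1_{\Loc}(M, T) / (\Aut_{\Loc}(M) \times \Aut_{R_p}(T))$. The main obstacle is purely bookkeeping: verifying that the action on the graph parameters $(\alpha^l, \alpha^r)$ descends consistently from the quotient defining $\Ext^1_{\Loc}$, i.e. that composing with $\phi$ and $\psi$ is well-defined on cosets modulo the restrictions of $\Hom_{R_p}(M, T)$; this follows because $\psi$ preserves $M^l, M^r$ by definition of $\Aut_{\Loc}(M)$.
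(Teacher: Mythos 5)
Your computation of $\Ext^1_{\Loc}(M,T)$ is correct and takes a genuinely self-contained route where the paper does not: the paper simply cites an earlier reference (\cite[Proposition 3.15]{H}) for this formula, whereas you derive it directly from the observation that in case (i) the point $p$ is smooth, so $R_p$ is a DVR, $M$ is free, the underlying $R_p$-module extension $0\to T\to N\to M\to 0$ splits, and (after fixing a splitting $N\cong T\oplus M$) the lattice data $N^\star\subset T\oplus M$ is the graph of a unique $\alpha^\star\colon M^\star\to T$. The bookkeeping that a change of splitting $(t,m)\mapsto(t+\gamma(m),m)$ shifts $(\alpha^l,\alpha^r)$ by $(\gamma|_{M^l},\gamma|_{M^r})$ is exactly right and yields the quotient formula. (One small slip: you call the splitting $N\cong T\oplus M$ ``canonical,'' but it is a choice, and all the rest of your argument correctly treats it as such; this is worth fixing so as not to confuse a reader.) This direct argument is in the same spirit as the paper's own proof of the companion Lemma for $\St_p\neq\{1\}$, so it is a perfectly good substitute for the external citation.

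For the passage from extension classes to isomorphism classes of objects, however, your appeal to a ``standard (Yoneda-style) observation'' hides the one genuine point. It is \emph{not} true in a general abelian category that an abstract isomorphism $\phi\colon N_1\cong N_2$ between two objects, each of which happens to sit in an extension $0\to T\to N_i\to M\to 0$, must carry the given subobject $T\subset N_1$ onto the given subobject $T\subset N_2$; absent such compatibility, $\phi$ gives no element of $\Aut_{\Loc}(M)\times\Aut_{R_p}(T)$ relating the two classes. What makes the argument go through here is a structural fact specific to this situation: $T$ is the torsion $R_p$-submodule of $N_i$, hence is canonically characterized inside $N_i$, so any isomorphism $\phi$ must restrict to an isomorphism $\bar\phi\colon T\to T$ and consequently induces an automorphism $\tilde\phi$ of $M\cong N_i/T$ with $\tilde\phi\circ p_1=p_2\circ\phi$. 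This is exactly the paper's argument, and it needs to be stated explicitly rather than referred to as a general principle.
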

\begin{proof}
The statement about $\Ext^1_{\Loc}(M,T)$ is a particular case of \cite[Proposition 3.15]{H}. Consider two extensions in $\Loc$, $0\to T\xrightarrow{i_1} N_1\xrightarrow{p_1} M\to 0$ and $0\to T\xrightarrow{i_2} N_2\xrightarrow{p_2} M\to 0$ and an isomorphism $\phi \colon N_1\cong N_2$. Since $T$ is the torsion submodule of $N_1$ and $N_2$, $\phi$ must preserve it, i.e. there must be an isomorphism $\ov\phi\in \Aut_{\Loc}(T)$ such that $\phi\circ i_1= i_2\circ \ov\phi$, and therefore $\phi$ induces an automorphism $\wt\phi$ on the quotients $M\cong N_i/T$, i.e. $\wt\phi\circ p_1 = p_2\circ \phi$. This shows that the extension classes of both extensions are in the same orbit of $\Aut_{\Loc}(M)\times \Aut_{R_p}(T)$, as desired.
\end{proof}

\begin{lemma}\label{lem:extensionsStNot1}
Let $p\in E$ with $\St_p = \{1,\sigma\}$. Suppose $M\in \Loc$ is torsion-free and finitely generated as an $R_p$-module, and $T$ is a finitely generated torsion $R_p$-module with a $\Z/2\Z\langle \sigma\rangle$, which can be seen as an element of $\Loc$ by (necessarily) letting $T^{lr} = 0$. Then extensions of $M$ by $T$ are classified by
\[
\Ext^1_{\Loc}(M,T) \cong \frac{ \Hom_{R_p}^{\Z/2\Z}(M^{lr},T)}{\Hom_{R_p}^{\Z/2\Z}(M,T)},
\]
where the map is given by restriction from $M$ to $M^{lr}$. Therefore, the set of isomorphism classes of objects of $\Loc$ which are extensions of $M$ by $T$ is in bijection with the quotient
\[
\frac{\Ext^1_{\Loc}(M,T)}{\Aut_{\Loc}(M)\times \Aut_{R_p}^{\Z/2\Z}(T)},
\]
where the automorphism groups act by (pre)composition.
\end{lemma}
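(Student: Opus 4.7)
My plan is to adapt the argument of Lemma~\ref{lem:extensionsSt=1} by merging the roles of the two submodules $M^l,M^r$ into the single submodule $M^{lr}$, and accommodating the $\St_p=\{1,\sigma\}$-equivariance via averaging. Given an extension $0\to T\to N\to M\to 0$ in $\Loc$, the condition $T^{lr}=0$ forces the exact sequence $0\to T^{lr}\to N^{lr}\to M^{lr}\to 0$ to collapse to an isomorphism $N^{lr}\xrightarrow{\sim} M^{lr}$, whose inverse (composed with $N^{lr}\hookrightarrow N$) furnishes a $\Z/2\Z$-equivariant $R_p$-linear section $s:M^{lr}\to N$ of the projection $N\to M$, with image $N^{lr}$.

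The crucial technical step is to show that the underlying extension of $\Z/2\Z$-equivariant $R_p$-modules splits. Since $p$ is smooth in case (ii), $R_p$ is a complete DVR and the finitely generated torsion-free module $M$ is free over $R_p$; thus there exists an (a priori non-equivariant) $R_p$-linear section $s_0:M\to N$. Its symmetrization $\tilde s(m):=\tfrac{1}{2}\bigl(s_0(m)+\sigma\cdot s_0(\sigma m)\bigr)$ is readily verified to be $R_p$-linear, $\Z/2\Z$-equivariant, and still a section of $N\to M$. The verification uses $\sigma^2=1$, the $\sigma$-semilinearity of the actions (so that $\sigma$ may act nontrivially on $R_p$ itself), and the invertibility of $2$ in $k$. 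Via $\tilde s$ we obtain an identification $N\cong T\oplus M$ of $\Z/2\Z$-equivariant $R_p$-modules.

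Under this identification, $N^{lr}$ becomes the graph of a unique $\Z/2\Z$-equivariant $R_p$-linear map $f:M^{lr}\to T$, i.e., $N^{lr}=\{(f(m),m):m\in M^{lr}\}$. Two choices of $\tilde s$ differ by some $\psi\in\Hom^{\Z/2\Z}_{R_p}(M,T)$, which changes $f$ by $\psi|_{M^{lr}}$; conversely, any class in $\Hom^{\Z/2\Z}_{R_p}(M^{lr},T)/\Hom^{\Z/2\Z}_{R_p}(M,T)$ is realized by the extension $N_f:=T\oplus M$ equipped with the graph of $f$ as $N_f^{lr}$. Compatibility of this bijection with the Baer-sum structure on $\Ext^1_\Loc(M,T)$ is immediate, since Baer sum corresponds to fiberwise addition of the graphs. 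The final claim follows as in Lemma~\ref{lem:extensionsSt=1}: any isomorphism between two objects of $\Loc$ that happen to be extensions of $M$ by $T$ must preserve the torsion submodule (the copy of $T$) and descend to an automorphism of the torsion-free quotient $M$, so isomorphism classes of such extensions are exactly the orbits of $\Aut_\Loc(M)\times\Aut^{\Z/2\Z}_{R_p}(T)$ acting on $\Ext^1_\Loc(M,T)$ by pre- and post-composition. The principal obstacle is the equivariant splitting described in the second paragraph; once that is in hand, the remainder is formal bookkeeping parallel to the trivial-stabilizer case.
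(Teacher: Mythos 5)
Your argument is correct, and it gives a cleaner proof of the key technical step than the one in the paper. The paper establishes the vanishing of $\Ext^1$ in the category of $\Z/2\Z$-equivariant $R_p$-modules by first decomposing $M$ as a direct sum of the rank-one objects $R_p^+$ and $R_p^-$ — this requires an inductive lifting of eigenvectors from $M/tM$ to $M/t^nM$ — and only then applying an averaging trick to each summand separately. You instead take an arbitrary $R_p$-linear section $s_0$ (available because $R_p$ is a complete DVR and $M$ is finitely generated torsion-free, hence free) and symmetrize it in one shot via $\tilde s(m)=\tfrac12(s_0(m)+\sigma\cdot s_0(\sigma m))$; the semilinearity of the $\sigma$-action cancels on both sides, so $\tilde s$ is $R_p$-linear, equivariant, and still a section, exactly as you claim. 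This bypasses the eigenspace decomposition entirely and hinges only on $2\in k^\times$. After that, your identification of $N^{lr}$ with the graph of a map $f:M^{lr}\to T$, the analysis of how $f$ changes when the section changes by an element of $\Hom^{\Z/2\Z}_{R_p}(M,T)$, and the passage to isomorphism classes via $\Aut_\Loc(M)\times\Aut^{\Z/2\Z}_{R_p}(T)$-orbits all match the paper's bookkeeping. The one place you wave your hand is the compatibility with the Baer sum, but the paper itself refers this to the analogous argument in the trivial-stabilizer case, so your treatment is at the same level of detail.
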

\begin{proof}
Let us start by showing that $\Ext^1(M,T) = 0$ as equivariant $R_p$-modules. Let $t\in R_p$ generate the maximal ideal, so we have that $\sigma^*t = -t$. Consider the action of $\ov\sigma$ on $M/tM$. Since $\ov\sigma^2= \Id$, $M/tM$ splits into two eigenspaces with eigenvalues $1$ and $-1$. Choose a $k$-basis of $M/tM$ composed of eigenvectors. Let us show that every element $m$ of this basis lifts to an eigenvector of $\ov\sigma$ in $M$, by showing it lifts to a basis of $M/t^nM$ by induction on $n$. Suppose we have a lift $m_n\in M/t^nM$ such that $\ov\sigma m = \epsilon m$, where $\epsilon = \pm 1$. Replacing $\ov\sigma$ by $-\ov\sigma$, we can prove this statement assuming that $\epsilon = 1$. Take any lift $\wt m$ of $m_n$ to $M/t^{n+1}M$. Note that $a\coloneqq \ov\sigma\wt m-  \wt m\in t^nM/t^{n-1}M$ satisfies $\ov\sigma a = -a$. It follows that the desired lift is $\wt m + \frac{a}{2}$, as we can check directly.

Let $R_p\cdot s^+$ (resp. $R_p\cdot s^-$) be the rank 1 free $R_p$ module generated by $s^+$ (resp. $s^-$) with equivariant structure given by $\ov\sigma s^+ = s^+$ (resp. $\ov\sigma s^-=-s^-$). By the previous paragraph, as an equivariant $R_p$-module, $M$ is a direct sum of copies of $R_p^+$ and $R_p^-$. Therefore, to show that $\Ext^1(M,T) = 0$ it suffices to verify this for $M=R_p^+,R_p^-$. Consider an extension $0\to T \to \ov M\to R_p^+\to 0$. Since $R_p^+$ is a free $R_p$-module, there is a $R_p$-linear section $s^+\to \wt{ s}$. It is $\Z/2\Z$-equivariant if $\ov\sigma \wt{s} =  \wt{s}$. Proceeding as before, the following map is necessarily a $\Z/2\Z$-equivariant section:
\[
s^+\mapsto \frac{\wt{s} + \ov\sigma \wt{s}}{2}.
\]
Note that since the map $\ov M\to R_p^+$ is $\Z/2\Z$-equivariant, we have that the image of $\frac{\wt{s} + \ov\sigma \wt{s}}{2}$ is $s^+$, as desired. To show that $\Ext^1(M,R_p^-)=0$, we can proceed analogously to conclude that $s^-\mapsto \frac{\wt s - \ov\sigma \wt s}{2}$ is a $\Z/2\Z$-equivariant section.
%

Therefore, all extensions in $\Ext^1_{\Loc}(M,T)$ split as extensions of $\Z/2\Z$-equivariant modules, i.e. they all take the form $0\to T\to T\oplus M\to M\to 0$, and they are determined by choosing a submodule $P^{lr}\subseteq T\oplus M$ mapping isomorphically to $M^{lr}$. Consider one such extension, and let $i \colon M^{lr}\to P^{lr}\subseteq T\oplus M$ be the inverse of the projection restricted to $P^{lr}$. The component of $i$ mapping into $M$ must be the inclusion of $M^{lr}$ in $M$, so $i = (i',1)\colon M^{lr}\to T\oplus M$. Conversely, for every $\Z/2\Z$-equivariant map $i'\colon M^{lr}\to L$ we obtain a submodule $P^{lr}$ as the image of $(i',1)$. This way we obtain a surjection $\Hom_{R_p}^{\Z/2\Z}(M^{lr},T)\twoheadrightarrow \Ext^1_{\Loc}(M,T)$.

Let us show that this surjection is $R_p$-linear: it commutes with multiplication by $R_p$, since on both groups it is induced by the action of $R_p$ on $T$. To see that it commutes with sums, we can see it directly by using the Baer sum. This is the same reasoning as the one in the proof of \cite[Proposition 3.15]{H}.

Therefore, we only need to compute the kernel of $\Hom_{R_p}^{\Z/2\Z}(M^{lr},T)\twoheadrightarrow \Ext^1_{\Loc}(M,T)$. These are the maps $i'\colon M^{lr}\to L$ for which there is a section $j:M\to T\oplus M$ which is a morphism in $\Loc$. Such a section $j$ must be of the form $(j',1)$, where $j'$ is a map $M\to T$. To be a morphism in $\Loc$, $j$ must be $\Z/2\Z$-equivariant (equivalently, $j'$ must be $\Z/2\Z$-equivariant) and we must have $j(M^{lr})\subseteq P^{lr}$. Now, note that
\[
jM^{lr}\subseteq P^{lr}\Leftrightarrow (j-i)M^{lr}\subseteq P^{lr} \xLeftrightarrow{(j'-i',0) = j-i} j'|_{M^{lr}}-i'= 0.
\]
In conclusion, the elements of $\Hom_{R_p}^{\Z/2\Z}(M^{lr},T)$ that yield a split extension are the ones which are in the image of the restriction from $\Hom_{R_p}^{\Z/2\Z}(M,T)$. The remainder of the proof is the same as the proof of Lemma~\ref{lem:extensionsSt=1}.
\end{proof}

\begin{corollary}
Let $P_0, M, f,S$ be as in Lemma~\ref{lem:niceDivisor}. Let $ M'\subseteq M$ be a subsheaf given by a collection of submodules $\{M'_p|p\in S\}$ as in Lemma~\ref{lem:allTorsionFreeModules}, where $M'_p$ is chosen to be finitely generated. Let $T$ be a torsion $E$-module. $T$ is given by specifying a torsion sheaf supported on $x\in S$ with $2x\neq O,P_0$ and a $\Z/2\Z$-equivariant torsion sheaf at the points where $2x \in \{O,P_0\}$. Let us suppose that $T$ is supported on a finite number of $\Z$-orbits.

The extensions of $M'$ by $T$ are classified by an extension class of $M'_p$ by $T_p$ for every $p\in E/G$, as described in Lemmas~\ref{lem:extensionsSt=1} and \ref{lem:extensionsStNot1}.
\end{corollary}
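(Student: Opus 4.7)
The plan is to combine the orbit-by-orbit decomposition of a torsion equivariant sheaf with the gluing Theorem~\ref{thm:main} applied one orbit at a time. Since $T$ is supported on finitely many $G$-orbits $Gp_1,\ldots,Gp_n$ (with $p_i\in S$), I would first observe that $T$ splits in $\Hol(E)$ as a direct sum $T\cong\bigoplus_i T_{Gp_i}$ with $T_{Gp_i}$ supported on $Gp_i$. This is the same argument already used inside the proof of Lemma~\ref{lem:allTorsionFreeModules}: a torsion equivariant sheaf is a direct sum of its stalks as an $\O_E$-module, and the equivariant structure permutes the stalks within each orbit, so the splitting descends to $\Hol(E)$. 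Because $\Ext^1_{\Hol(E)}(M',-)$ preserves finite direct sums, this reduces the classification to computing $\Ext^1_{\Hol(E)}(M',T_{Gp_i})$ one orbit at a time.

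Next, I would fix one orbit $Gp$, set $T'=T_{Gp}$, and apply Theorem~\ref{thm:main} at $p$ (using Theorem~\ref{thm:mainThm-elliptic} together with Proposition~\ref{prop:EllipticModulesAreDihedral} to pass between elliptic modules and $G$-equivariant sheaves on $E$). This yields $\Hol(E)\cong\Loc\times_{\Locc}\Hol(E\setminus Gp)$, under which an extension $0\to T'\to N\to M'\to 0$ corresponds to a pair: an extension of $M'|_\pz$ by $T'|_\pz\cong T_p$ in $\Loc$, and an extension of $M'|_{E\setminus Gp}$ by $T'|_{E\setminus Gp}=0$ in $\Hol(E\setminus Gp)$, together with a compatibility datum over $\Locc$. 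The second extension splits canonically, and the compatibility over $\pzz$ is forced because both sides of the gluing datum become identified with $M'|_\pzz$. This gives
\[
\Ext^1_{\Hol(E)}(M',T')\;\cong\;\Ext^1_{\Loc}(M'|_\pz,T_p),
\]
which is precisely the group computed in Lemma~\ref{lem:extensionsSt=1} when $\St_p=\{1\}$ and in Lemma~\ref{lem:extensionsStNot1} when $\St_p$ is a nontrivial order-two subgroup (arising at the $2$-torsion points and at points with $2p=P_0$).

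The main obstacle I expect is the passage from extension classes to isomorphism classes: the local lemmas take the quotient by $\Aut_\Loc(M'|_\pz)\times \Aut_{R_p}(T_p)$, and one must match this with the quotient of $\Ext^1_{\Hol(E)}(M',T)$ by $\Aut_{\Hol(E)}(M')\times \Aut_{\Hol(E)}(T)$. On the $T$-side this is straightforward, because the direct sum $T\cong\bigoplus_i T_{Gp_i}$ yields $\Aut_{\Hol(E)}(T)=\prod_i \Aut_{\Hol(E)}(T_{Gp_i})$ and each factor is exactly the equivariant $R_{p_i}$-automorphism group of $T_{p_i}$ (by a stalk-at-$p_i$ argument analogous to the orbit splitting). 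On the $M'$-side, Lemma~\ref{lem:niceDivisor} gives $\Aut_{\Hol(E)}(M')=k^\times$, which embeds into but is typically smaller than $\Aut_\Loc(M'|_\pz)$; to reconcile this with the local lemmas I would check that any local automorphism of $M'|_\pz$ which genuinely changes the isomorphism class of $N$ over $\Hol(E)$ either descends from a global $k^\times$-scalar or can be absorbed into the (unique) gluing datum of Theorem~\ref{thm:main}, so that the orbit-wise quotients assembled from Lemmas~\ref{lem:extensionsSt=1} and~\ref{lem:extensionsStNot1} give the correct set of isomorphism classes.
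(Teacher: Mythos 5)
Your first two paragraphs are exactly the paper's argument: split $T$ as a direct sum over orbits, apply Theorem~\ref{thm:main} once per orbit, and use $T_{Gp}|_{E\setminus Gp}=0$ to collapse the fiber-product description of $\Ext^1_{\Hol(E)}(M',T_{Gp})$ to the local group $\Ext^1_{\Loc}(M'|_{U_p},T_p)$ computed in Lemmas~\ref{lem:extensionsSt=1} and~\ref{lem:extensionsStNot1}. Your third paragraph is not needed: the corollary asserts only the decomposition of $\Ext^1$ into local extension classes, not a bijection of isomorphism classes of the total spaces, and the paper's proof stops where your second paragraph does. The worry you raise there is genuine --- since $\Aut_{\Hol(E)}(M')\cong k^\times$ does not surject onto $\prod_p\Aut_{\Loc}(M'_p)$, the global quotient by automorphisms is in general a further quotient of, not equal to, the product of the local quotients appearing in the two lemmas --- so the reconciliation you sketch would not go through as written; but it is also not part of the claim.
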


\begin{proof}
$T$ has the for $\bigoplus_{i=0}^p T_{i}$, where each $T_{i}$ supported on a different orbit $G\cdot p_i$. Therefore, $\Ext^1_{\Hol(E)}(E,T)\cong \bigoplus_{i} \Ext^1_{\Hol(E)}(E,T_{i})$. Applying Theorem~\ref{thm:main} and the fact that $T_{i}|_{E\setminus G\cdot p_i}=0$, we have the desired result:
\[\Ext^1_{\Hol(E)}(E,T)\cong \bigoplus_{i} \Ext^1_{{\Mod(U_{p_i})}
}(E|_{U_{p_i}},T_{i}).\]
\end{proof}

\bibliographystyle{alpha}
\bibliography{Bibliography}

\end{document}